\def\be{\begin{eqnarray}}
\def\ee{\end{eqnarray}}
\newtheorem{theorem}{Theorem}
\newtheorem{definition}[theorem]{Definition}
\newtheorem{lemma}[theorem]{Lemma}
\newtheorem{proposition}[theorem]{Proposition}
\newtheorem{remark}[theorem]{Remark}
\newtheorem{corollary}[theorem]{Corollary}
\newenvironment{customprop}[1]
  {\innercustomprop}
  {\endinnercustomprop}
\begin{document}

\thispagestyle{empty}
\baselineskip14pt

\ytableausetup{boxsize=5pt}
\hfill ITEP/TH-11/17
\par
\hfill IITP/TH-5/17

\bigskip
\bigskip

\centerline{\Large{Genus Two Generalization of $A_1$ spherical DAHA
}}

\vspace{3ex}

\centerline{\large{\emph{S.Arthamonov\footnote{Department of Mathematics, Rutgers, The State University of New Jersey; semeon.artamonov@rutgers.edu \\ ITEP, Moscow, Russia} and Sh.Shakirov\footnote{Harvard Society of Fellows, Harvard University, Cambridge MA; shakirov@fas.harvard.edu \\
Institute for Information Transmission Problems, Moscow, Russia}}}}

\vspace{3ex}

\centerline{ABSTRACT}

\bigskip

{\footnotesize
We consider a system of three commuting difference operators in three variables $x_{12},x_{13},x_{23}$ with two generic complex parameters $q,t$. This system and its eigenfunctions generalize the trigonometric $A_1$ Ruijsenaars-Schneider model and $A_1$ Macdonald polynomials, respectively. The principal object of study in this paper is the algebra generated by these difference operators together with operators of multiplication by $x_{ij} + x_{ij}^{-1}$. We represent the Dehn twists by outer automorphisms of this algebra and prove that these automorphisms satisfy all relations of the mapping class group of the closed genus two surface.
Therefore we argue from topological perspective this algebra is a genus two generalization of $A_1$ spherical DAHA.
}

\section{Introduction}

Double affine Hecke algebra \cite{DAHA}, introduced and developed by I.Cherednik, plays a crucial role in modern representation theory and mathematical physics from a number of perspectives (see \cite{DAHA} for a comprehensive overview). One perspective which is especially interesting for us is its close relation with topology of the torus and its mapping class group $SL(2,{\mathbb Z})$: namely, this mapping class group acts on the DAHA by outer automorphisms. This, in particular, allows to compute DAHA-Jones polynomials of torus \cite{DAHAJones} and iterated torus \cite{DAHAJones2} links, which generalize and improve WRT invariants \cite{W,RT} and hence are very interesting from the viewpoint of knot theory. DAHA-Jones polynomials are also related to a number of subjects in mathematical physics, such as refined Chern-Simons theory \cite{AS} and knot superpolynomials \cite{superpoly1,superpoly2}.

We concentrate on a particular subalgebra of the double affine Hecke algebra, called spherical DAHA, in the case of the $A_1$ root system. In this case, the spherical DAHA admits a very simple polynomial representation as the algebra generated by two operators,

\begin{align}
{\hat {\cal O}}_B = x + x^{-1}, \ \ \ \ \ \ {\hat {\cal O}}_A = \dfrac{x^{-1} - t x}{x^{-1} - x} \ {\hat \delta} + \dfrac{x - t x^{-1}}{x - x^{-1}} \ {\hat \delta}^{-1}
\label{OAOB}
\end{align}
\vspace{1ex}
\begin{wrapfigure}{r}{0.35\textwidth}
\vspace{-1ex}
  \begin{center}
    \includegraphics[width=0.35\textwidth]{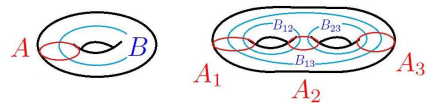}
  \end{center}
  \vspace{-2ex}
  \caption{A choice of $A$- and $B$-cycles.}
  \label{Cycles}
  \vspace{-1ex}
\end{wrapfigure}
acting on the space of symmetric Laurent polynomials in a single variable $x$; here ${\hat \delta} f(x) = f\left(q^{\frac{1}{2}} x\right)$ is a shift operator. These two operators are associated with the $A$- and $B$-cycles of the torus, as shown in Figure 1, and the mapping class group acts on the algebra generated by these operators by certain explicit outer automorphisms.

In this paper we consider an algebra generated by six operators acting on the space of Laurent polynomials in three variables $x_{12},x_{13},x_{23}$, associated with the three $A$- and three $B$-cycles of a genus two surface, as shown on Figure 1. We represent the Dehn twists by outer automorphisms of this algebra and prove that these automorphisms satisfy all relations of the mapping class group of a closed genus two surface. This suggests this algebra is a genus two generalization of $A_1$ spherical DAHA. The reason we consider the case of genus two is not because this case is particularly distinguished, but as a first step towards a general construction for arbitrary genus.

\pagebreak

\section{Genus two Macdonald polynomials}

We fix the ground field ${\mathbf{k}} = {\mathbb C}\left( q^{\frac{1}{4}}, t^{\frac{1}{4}}\right)$ to be the field of rational functions in $q^{\frac{1}{4}}$ and $t^{\frac{1}{4}}$.

\begin{definition}
We call a triple of integers $(j_1,j_2,j_3)$ admissible if each of them is non-negative, $|j_1 - j_2| \leq j_3 \leq j_1 + j_2$ and $j_1 + j_2 + j_3$ is even.
\end{definition}

\begin{definition}
Consider a family of Laurent polynomials $\Psi_{j_1,j_2,j_3}$ in variables $x_{12},x_{13},x_{23}$ which are a common solution to the following system of recursive relations, satisfied for all admissible triples $(j_1,j_2,j_3)$:

\begin{subequations}
\begin{align}
\label{Pieri12}(x_{12} + x_{12}^{-1}) \ \Psi_{j_1,j_2,j_3} = \sum\limits_{a,b \in \{-1,+1\}} \ C_{a,b}(j_1,j_2,j_3) \ \Psi_{j_1+a,j_2+b,j_3} \\
\nonumber \\
\label{Pieri13}(x_{13} + x_{13}^{-1}) \ \Psi_{j_1,j_2,j_3} = \sum\limits_{a,b \in \{-1,+1\}} \ C_{a,b}(j_1,j_3,j_2) \ \Psi_{j_1+a,j_2,j_3+b} \\
\nonumber \\
\label{Pieri23}(x_{23} + x_{23}^{-1}) \ \Psi_{j_1,j_2,j_3} = \sum\limits_{a,b \in \{-1,+1\}} \ C_{a,b}(j_2,j_3,j_1) \ \Psi_{j_1,j_2+a,j_3+b}
\end{align}
\label{Pieri}
\end{subequations}
\smallskip\\
with initial conditions: $\Psi_{j_1,j_2,j_3} = 0$ for non-admissible triples $(j_1,j_2,j_3)$, and $\Psi_{0,0,0} = 1$. Here

\begin{align}
C_{a,b}(j_1,j_2,j_3) = a \ b \ \frac{
\left[ \frac{a j_1 + b j_2 + j_3}{2}, \frac{a + b + 2}{2} \right]_{q,t}
\left[ \frac{a j_1 + b j_2 - j_3}{2}, \frac{a + b}{2} \right]_{q,t}
\Big[j_1 - 1, 2\Big]_{q,t}
\Big[j_2 - 1, 2\Big]_{q,t}
}
{
\Big[j_1, \frac{a+3}{2} \Big]_{q,t}\Big[j_1 - 1, \frac{a+3}{2}\Big]_{q,t}\Big[j_2, \frac{a+3}{2} \Big]_{q,t}\Big[j_2 - 1, \frac{a+3}{2}\Big]_{q,t}
}
\label{Cab}
\end{align}
\smallskip\\
where
\begin{align*}
[n,m]_{q,t} = \dfrac{ q^{\frac{n}{2}} t^{\frac{m}{2}} - q^{-\frac{n}{2}} t^{-\frac{m}{2}} }{ q^{\frac{1}{2}} - q^{-\frac{1}{2}} }
\end{align*}
We call $\Psi_{j_1,j_2,j_3}$ genus two Macdonald polynomials (of type $A_1$), and recursion (\ref{Pieri}) genus two Pieri rule. Their existence follows from the following Lemma and Proposition.

\end{definition}

\begin{lemma}
For a given admissible triple $(j_1,j_2,j_3)$, the coefficient $C_{a,b}(j_1,j_2,j_3)$ is non-vanishing if and only if the triple $(j_1+a,j_2+b,j_3)$ is admissible.
\end{lemma}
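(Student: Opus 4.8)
The plan is to analyze directly when each factor in the formula~\eqref{Cab} for $C_{a,b}(j_1,j_2,j_3)$ vanishes, and match this against the combinatorial conditions defining admissibility of $(j_1+a,j_2+b,j_3)$. First I would record the elementary fact that $[n,m]_{q,t}=0$ if and only if $q^{n}t^{m}=1$, which for generic $q,t$ happens precisely when $n=m=0$. Since all the brackets appearing in the numerator and denominator of~\eqref{Cab} have half-integer or integer entries of a very constrained form, I expect that for an admissible input triple the only way to get a zero is through one of the two numerator brackets $\left[\frac{aj_1+bj_2+j_3}{2},\frac{a+b+2}{2}\right]_{q,t}$ and $\left[\frac{aj_1+bj_2-j_3}{2},\frac{a+b}{2}\right]_{q,t}$, or through $[j_1-1,2]_{q,t}$, $[j_2-1,2]_{q,t}$; one must also check that no denominator bracket vanishes on admissible inputs, so that $C_{a,b}$ is genuinely defined and the ``iff'' is not spoiled by a $0/0$.

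Concretely, I would split into the four sign cases $(a,b)\in\{-1,+1\}^2$. The cleanest is $(a,b)=(+1,+1)$: then the second numerator bracket is $\left[\frac{j_1+j_2-j_3}{2},0\right]_{q,t}$, which vanishes iff $j_1+j_2-j_3=0$, i.e. iff $j_3=j_1+j_2$; and in that situation $(j_1+1,j_2+1,j_3)$ fails the inequality $j_3\le (j_1+1)+(j_2+1)$? — no, it still satisfies it, so the relevant failed condition is $j_3 \ge |(j_1+1)-(j_2+1)| = |j_1-j_2|$, which also still holds; the actual obstruction when $j_3=j_1+j_2$ is... here I must be careful: I would check each admissibility inequality for $(j_1+a,j_2+b,j_3)$ against the precise boundary locus where a numerator bracket hits zero, and confirm they coincide. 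The parity condition $j_1+a+j_2+b+j_3$ even is automatic since $a+b$ is even, so parity never obstructs. The symmetric case $(a,b)=(-1,-1)$ works the same way with $j_1,j_2$ decreased, where now the first numerator bracket $\left[\frac{j_1+j_2-j_3}{2},0\right]_{q,t}$ plays the analogous role, and the brackets $[j_1-1,2]_{q,t}$, $[j_2-1,2]_{q,t}$ detect the walls $j_1=1$? no — $[j_1-1,2]_{q,t}=0$ iff $j_1-1=0$ and $2=0$, impossible, so actually $[j_1-1,2]_{q,t}$ never vanishes for generic $q,t$; hence these factors are harmless and the vanishing is governed entirely by the two ``structure constant'' brackets. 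For the mixed cases $(a,b)=(+1,-1)$ and $(-1,+1)$ the two numerator brackets become $\left[\frac{\pm(j_1-j_2)+j_3}{2},1\right]_{q,t}$ and $\left[\frac{\pm(j_1-j_2)-j_3}{2},0\right]_{q,t}$; the second vanishes iff $j_3=\pm(j_1-j_2)$, matching the wall $|j_1-j_2|$ of the triangle inequality for the shifted triple, and the first, having nonzero second entry $1$, never vanishes for generic $q,t$.

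The main obstacle, and the place where I would spend the most care, is the bookkeeping in the mixed cases: verifying that the single vanishing locus of the relevant bracket is \emph{exactly} the locus where $(j_1+a,j_2+b,j_3)$ violates one (and only one) of the three triangle-type inequalities, with no spurious extra zeros and no missed ones — in particular checking the corner configurations where two inequalities degenerate simultaneously (e.g. $j_1=j_2$, or $j_3=0$). I would also need the easy but essential sanity check that when the input $(j_1,j_2,j_3)$ is admissible and $(j_1+a,j_2+b,j_3)$ is admissible, \emph{every} denominator bracket in~\eqref{Cab} is nonzero, so that $C_{a,b}$ is a well-defined nonzero element of $\mathbf{k}$; this follows because each denominator entry is of the form $[m,1]_{q,t}$ or $[m,2]_{q,t}$ with nonzero second slot, hence nonvanishing for generic $q,t$ regardless of $m$. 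Assembling these case checks gives the equivalence in both directions.
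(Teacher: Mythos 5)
Your overall strategy is the same as the paper's: split into the four sign cases, use that for generic $q,t$ one has $[n,m]_{q,t}=0$ iff $n=m=0$, observe that the factors $[j_1-1,2]_{q,t}$, $[j_2-1,2]_{q,t}$ and all denominator brackets (second slot $\tfrac{a+3}{2}\in\{1,2\}$) never vanish, and match the vanishing loci of the two ``structure constant'' brackets in the numerator of \eqref{Cab} against the admissibility walls of the shifted triple. Your mixed cases $(a,b)=(\pm1,\mp1)$ and the case $(-1,-1)$ are handled correctly and agree with the paper (including the observation that parity is never an issue, and implicitly that failures of non-negativity such as $j_1=0$ are absorbed into the walls $a j_1+b j_2=j_3$ or $j_1+j_2=j_3$ via admissibility of the input triple).

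However, there is a genuine error in your treatment of $(a,b)=(+1,+1)$, and you noticed the resulting tension without resolving it. For $(a,b)=(+1,+1)$ the second numerator bracket is $\bigl[\tfrac{j_1+j_2-j_3}{2},\tfrac{a+b}{2}\bigr]_{q,t}=\bigl[\tfrac{j_1+j_2-j_3}{2},1\bigr]_{q,t}$, not $\bigl[\tfrac{j_1+j_2-j_3}{2},0\bigr]_{q,t}$: the second slot is $1$, so for generic $q,t$ this factor never vanishes. Consequently $C_{+1,+1}(j_1,j_2,j_3)\neq0$ for every admissible input, which is exactly as it must be, since $(j_1+1,j_2+1,j_3)$ is always admissible ($|j_1-j_2|$ is unchanged and $j_1+j_2$ increases by $2$, as the paper notes). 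Your misreading of the slot as $0$ led you to a purported vanishing at $j_3=j_1+j_2$, and then to the unresolved search for ``the actual obstruction'' — if your reading were correct, this configuration would be a counterexample to the lemma itself, so the case cannot be left in that state. Once this slot is corrected the case closes immediately, and with it your argument becomes a complete proof along the same lines as the paper's; your explicit check that the denominator brackets are generically nonzero is a sensible addition that the paper leaves implicit.
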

\begin{proof}
First note that $a + b = 0$ mod $2$, so that we don't have to worry about the parity condition. As for the other conditions, there are three cases to consider.
\begin{itemize}
\item{$(a,b) = (+1,+1)$} In this case $|j_1 - j_2|$ does not change, and $j_1 + j_2$ increases by 2, so the triple $(j_1+a,j_2+b,j_3)$ has to be admissible.
\item{$(a,b) = (\pm 1,\mp 1)$} In this case $j_1 + j_2$ does not change, but $|j_1 - j_2|$ changes by 2. Therefore the triple $(j_1+a,j_2+b,j_3)$ is admissible unless $a j_1 + b j_2 = j_3$. This is precisely when the second factor in the numerator of (\ref{Cab}) vanishes.
\item{$(a,b) = (-1,-1)$} In this case $|j_1 - j_2|$ does not change, and $j_1 + j_2$ decreases by 2. Therefore the triple $(j_1+a,j_2+b,j_3)$ is admissible unless $j_1 + j_2 = j_3$. This is precisely when the first factor in the numerator of (\ref{Cab}) vanishes.
\end{itemize}
\end{proof}
This technical lemma is important: since each triple that appears on the r.h.s. of (\ref{Pieri}) with a non-vanishing coefficient is necessarily admissible, one can iterate the genus two Pieri rule.

\pagebreak

\begin{proposition}
Recursive relations (\ref{Pieri12}), (\ref{Pieri13}), and (\ref{Pieri23}) are compatible.
\end{proposition}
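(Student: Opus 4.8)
The plan is to recast the Proposition as the commutativity of three natural difference operators on the lattice of admissible triples, and then to verify that commutativity as a finite list of explicit identities among the coefficients (\ref{Cab}).

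I would first introduce, on the free $\mathbf{k}$-module $V$ with basis $\{e_{j_1,j_2,j_3}\}$ indexed by admissible triples, the three linear operators that implement the right-hand sides of the Pieri rules on the index lattice, namely
\[
B_{12}\,e_{j_1,j_2,j_3}=\sum_{a,b\in\{\pm1\}}C_{a,b}(j_1,j_2,j_3)\,e_{j_1+a,\,j_2+b,\,j_3},
\]
and, analogously, $B_{13}$ (built from $C_{a,b}(j_1,j_3,j_2)$, shifting $j_1$ and $j_3$) and $B_{23}$ (built from $C_{a,b}(j_2,j_3,j_1)$, shifting $j_2$ and $j_3$). By the Lemma these operators are well defined: every basis vector occurring with a nonzero coefficient is again indexed by an admissible triple, so the recursion never leaves the admissible set. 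Since the operators of multiplication by $x_{ij}+x_{ij}^{-1}$ commute on Laurent polynomials, a common solution of (\ref{Pieri12})--(\ref{Pieri23}) normalised by $\Psi_{0,0,0}=1$ is consistent exactly when $B_{12},B_{13},B_{23}$ pairwise commute; granting that, $\Psi_{j_1,j_2,j_3}$ is produced by a routine induction on $j_1+j_2+j_3$ (every admissible triple with positive sum is obtained from an admissible triple of smaller sum by a $C_{1,1}$-move in one of the three Pieri rules, and isolating the new polynomial from that term only requires dividing by the nonzero scalar $C_{1,1}$, all remaining terms living at strictly lower level), the outcome is independent of the order of the steps, and it is manifestly a Laurent polynomial. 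So the content of the Proposition is precisely
\[
[B_{12},B_{13}]=[B_{12},B_{23}]=[B_{13},B_{23}]=0 .
\]

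Next I would expand one commutator and read off the coefficient of a fixed basis vector. Writing the net shift as $(j_1+\alpha,\,j_2+\beta,\,j_3+\gamma)$ with $\beta,\gamma\in\{\pm1\}$ and $\alpha\in\{-2,0,2\}$, the relation $[B_{12},B_{13}]=0$ becomes, for each such $(\alpha,\beta,\gamma)$ and each admissible $(j_1,j_2,j_3)$,
\[
\sum_{\substack{p+a=\alpha\\p,a\in\{\pm1\}}}\Big(\,C_{p,\gamma}(j_1,j_3,j_2)\,C_{a,\beta}(j_1+p,\,j_2,\,j_3+\gamma)-C_{a,\beta}(j_1,j_2,j_3)\,C_{p,\gamma}(j_1+a,\,j_3,\,j_2+\beta)\,\Big)=0,
\]
with the analogous identities for $[B_{12},B_{23}]$ and $[B_{13},B_{23}]$. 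For $\alpha=\pm2$ the sum has a single term; after substituting (\ref{Cab}) and cancelling the common factors, the two products turn out to be built from the same quantities $[n,m]_{q,t}$ up to reordering, so these identities are immediate. The substance is the $\alpha=0$ case, a genuine four-term identity, together with its counterparts for the other two commutators. Admissible triples on the boundary of the admissible cone, where some $[n,m]_{q,t}$ vanish and the target triple on the right may itself be inadmissible, are handled by the same case split as in the proof of the Lemma.

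To prove the four-term identities I would clear the (nonvanishing) denominators, turning each into a polynomial identity in $q^{1/2},t^{1/2}$, and reduce it to repeated use of the three-term identity
\[
[a_1,a_2]_{q,t}\,[b_1,b_2]_{q,t}-[a_1-c_1,\,a_2-c_2]_{q,t}\,[b_1+c_1,\,b_2+c_2]_{q,t}=[c_1,c_2]_{q,t}\,[\,b_1-a_1+c_1,\,b_2-a_2+c_2\,]_{q,t},
\]
which follows immediately from $[n,m]_{q,t}=\dfrac{q^{n/2}t^{m/2}-q^{-n/2}t^{-m/2}}{q^{1/2}-q^{-1/2}}$, together with the elementary relations $[n,m]_{q,t}=-[-n,-m]_{q,t}$ and $[0,0]_{q,t}=0$. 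Concretely one applies this identity with a suitably chosen pivot $(c_1,c_2)$ to each product on the left, splitting it into two, so that the unwanted cross-terms cancel in pairs and exactly the right-hand side survives. The main obstacle is precisely this organisational step: identifying which $[n,m]_{q,t}$ the two sides share, choosing the right pivot at each application of the three-term identity, and checking that the cancellation pattern remains uniform as $(j_1,j_2,j_3)$ approaches the boundary of the admissible cone. Since in the end every identity involved is an explicit equality of rational functions in the two variables $q^{1/2},t^{1/2}$, any individual case can, if all else fails, be certified by direct computation, so the genuine difficulty is the combinatorial bookkeeping rather than anything structural.
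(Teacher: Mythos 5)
Your proposal is correct and follows essentially the same route as the paper: both reduce compatibility to the vanishing, for each net shift $a\in\{-2,0,2\}$ and each sign pattern, of the same quadratic sums of coefficients $C_{a,b}$ from (\ref{Cab}), verified in the end by explicit computation in $\mathbf{k}$ (your observation that the $a=\pm2$ cases are trivial rearrangements and your three-term $[n,m]_{q,t}$ bookkeeping are just a finer organisation of that same check). The extra packaging via commuting operators on the lattice of admissible triples and the induction producing $\Psi_{j_1,j_2,j_3}$ matches what the paper leaves implicit, so there is no substantive difference in approach.
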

\begin{proof}
Let us first prove compatibility of (\ref{Pieri12}) and (\ref{Pieri13}). Applying (\ref{Pieri12}) followed by (\ref{Pieri13}), we get

\begin{align*}
(x_{13} + x_{13}^{-1}) (x_{12} + x_{12}^{-1}) \Psi_{j_1,j_2,j_3} = \sum\limits_{a_1,b_1} \sum\limits_{a_2,b_2} \ C_{a_1, b_1}(j_1,j_2,j_3) \ C_{a_2, b_2}(j_1+a_1,j_3,j_2+b_1) \ \Psi_{j_1+a_1+a_2,j_2+b_1,j_3+b_2}
\end{align*}
\smallskip\\
In the opposite order we get

\begin{align*}
(x_{12} + x_{12}^{-1}) (x_{13} + x_{13}^{-1}) \Psi_{j_1,j_2,j_3} = \sum\limits_{a_1,b_1} \sum\limits_{a_2,b_2} \ C_{a_1, b_2}(j_1,j_3,j_2) \ C_{a_2, b_1}(j_1+a_1,j_2,j_3+b_2) \ \Psi_{j_1+a_1+a_2,j_2+b_1,j_3+b_2}
\end{align*}
\smallskip\\
Compatibility of (\ref{Pieri12}) and (\ref{Pieri13}) follows from the vanishing of their difference

\begin{align*}
\sum\limits_{a_1 + a_2 = a} \big( C_{a_1, b_1}(j_1,j_2,j_3) \ C_{a_2, b_2}(j_1+a_1,j_3,j_2+b_1) -  C_{a_1, b_2}(j_1,j_3,j_2) \ C_{a_2, b_1}(j_1+a_1,j_2,j_3+b_2) \big) = 0
\end{align*}
\smallskip\\
for all $a \in \{-2,0,2\}$, $b_1, b_2 \in \{-1,1\}$, and $j_1, j_2, j_3$, which is a direct computation in ${\mathbf{k}} = {\mathbb C}\left( q^{\frac{1}{4}}, t^{\frac{1}{4}}\right)$. Compatibility of the other two pairs follows by permutation of $(j_1,j_2,j_3)$.

\end{proof}

\begin{corollary}
Genus two Macdonald polynomials are symmetric w.r.t. sumultaneous permutations:
\begin{align}
\Psi_{j_1,j_2,j_3}(x_{12},x_{13},x_{23}) = \Psi_{j_{\sigma_1},j_{\sigma_2},j_{\sigma_3}}(x_{\sigma_1 \sigma_2},x_{\sigma_1 \sigma_3},x_{\sigma_2 \sigma_3}), \ \ \ \forall \sigma \in S_3
\label{eq:PsiS3Symmetry}
\end{align}
\label{Symmetry}
\hspace{1ex}
\end{corollary}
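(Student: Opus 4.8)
The plan is to exploit the fact that the system (\ref{Pieri12})--(\ref{Pieri23}), together with its initial data, is invariant under the simultaneous $S_3$-action of (\ref{eq:PsiS3Symmetry}), and that this system determines $\Psi$ uniquely. For $\sigma\in S_3$ I would set
$$\Psi^{\sigma}_{j_1,j_2,j_3}(x_{12},x_{13},x_{23}) \;:=\; \Psi_{j_{\sigma_1},j_{\sigma_2},j_{\sigma_3}}\bigl(x_{\sigma_1\sigma_2},x_{\sigma_1\sigma_3},x_{\sigma_2\sigma_3}\bigr),$$
which is well defined since $x_{ij}=x_{ji}$, and then show that $\{\Psi^{\sigma}_{j_1,j_2,j_3}\}$ satisfies the same recursion and initial conditions as $\{\Psi_{j_1,j_2,j_3}\}$; uniqueness then forces $\Psi^{\sigma}=\Psi$, which is exactly (\ref{eq:PsiS3Symmetry}). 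Invariance of the initial data is immediate: $\Psi^{\sigma}_{0,0,0}=\Psi_{0,0,0}=1$, and $\Psi^{\sigma}_{j_1,j_2,j_3}=0$ whenever $(j_1,j_2,j_3)$ is non-admissible, because admissibility of a triple depends only on its underlying multiset.

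The substantive step is to check that $\{\Psi^{\sigma}_{j_1,j_2,j_3}\}$ again solves (\ref{Pieri12})--(\ref{Pieri23}). Here I would first rewrite the system by indexing the three relations by the $2$-element subset $\{k,l\}\subset\{1,2,3\}$ of indices that get shifted: with $m$ the remaining index, relation $P_{\{k,l\}}$ reads $(x_{kl}+x_{kl}^{-1})\,\Psi_{j_1,j_2,j_3} = \sum_{a,b\in\{\pm1\}} C_{a,b}(j_k,j_l,j_m)\,\Psi_{(j_1,j_2,j_3)+a\,e_k+b\,e_l}$. The symmetry $C_{a,b}(j_1,j_2,j_3)=C_{b,a}(j_2,j_1,j_3)$, read off directly from (\ref{Cab}), shows that $P_{\{k,l\}}$ does not depend on the order in which $k$ and $l$ are listed, so (\ref{Pieri}) really consists of these three relations, one per $2$-subset. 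I would then apply the substitution $j_i\mapsto j_{\sigma_i}$, $x_{ij}\mapsto x_{\sigma_i\sigma_j}$ --- legitimate because $P_{\{k,l\}}$ is a polynomial identity in the $x_{ij}$ valid for every integer triple --- and, after relabelling the summation variables $a,b$ and using the symmetry of $C_{a,b}$ once more, recognise the result as the relation $P_{\{\sigma_k,\sigma_l\}}$ for the family $\{\Psi^{\sigma}_{j_1,j_2,j_3}\}$. Since $\sigma$ permutes the three $2$-subsets of $\{1,2,3\}$, this yields all of (\ref{Pieri}) for $\{\Psi^{\sigma}_{j_1,j_2,j_3}\}$.

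It then remains to note that (\ref{Pieri}) with the given initial data has a unique solution, which I would prove by induction on $N=j_1+j_2+j_3$. An admissible triple with $N\ge2$ has at most one vanishing entry, hence has two positive entries $j_k,j_l$; the parity constraint together with $j_m\le j_k+j_l$ leaves two cases. If $j_m\le j_k+j_l-2$ then $(j_1,j_2,j_3)-e_k-e_l$ is admissible with index sum $N-2$; if $j_m=j_k+j_l$ then $j_m\ge2$ and $(j_1,j_2,j_3)-e_k-e_m$ is admissible with index sum $N-2$. In either case the Lemma shows the coefficient $C_{1,1}$ of the top term of the corresponding Pieri relation at that smaller triple is non-zero, so $\Psi_{j_1,j_2,j_3}$ is expressed through $\Psi$'s of strictly smaller index sum. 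Combined with the previous step, $\Psi^{\sigma}=\Psi$ for all $\sigma\in S_3$, which is (\ref{eq:PsiS3Symmetry}).

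The main obstacle I expect is the bookkeeping in the second paragraph: tracking precisely which relation $P_{\{k,l\}}$ is carried to which, and how the labels $a,b$ and the variables $x_{ij}$ are relabelled, so that the substituted identity is genuinely a Pieri relation for $\{\Psi^{\sigma}_{j_1,j_2,j_3}\}$. Everything turns on isolating the right symmetry of the coefficients (\ref{Cab}) --- the invariance of $P_{\{k,l\}}$ under transposing the shifted pair --- and once that symmetry is pinned down (by inspection of (\ref{Cab}), or a short computation in ${\mathbf{k}}$) the rest of the argument is purely formal.
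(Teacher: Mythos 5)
Your plan is essentially the paper's own (implicit) argument: Corollary \ref{Symmetry} carries no separate proof because it is meant to follow from the $S_3$-covariance of the defining system (\ref{Pieri}) together with the fact that (\ref{Pieri}) and its initial data determine $\Psi$ uniquely, which is exactly what you do. Your uniqueness induction is correct (the case split $j_m\leq j_k+j_l-2$ versus $j_m=j_k+j_l$, with the top coefficient $C_{+1,+1}$ non-vanishing by the non-vanishing Lemma, covers all admissible triples), and it supplies detail the paper leaves tacit.

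The one step you should not present as immediate is the identity $C_{a,b}(j_1,j_2,j_3)=C_{b,a}(j_2,j_1,j_3)$ "read off directly from (\ref{Cab})". As printed, all four denominator factors of (\ref{Cab}) carry the second entry $\frac{a+3}{2}$, including the two factors built from $j_2$, and with that literal reading the symmetry fails whenever $a\neq b$: for instance the printed formula gives $C_{+1,-1}(0,1,1)=\frac{[0,1]_{q,t}}{[1,2]_{q,t}\,[0,2]_{q,t}}$ but $C_{-1,+1}(1,0,1)=\frac{[0,2]_{q,t}\,[-1,2]_{q,t}}{[1,1]_{q,t}\,[0,1]_{q,t}\,[-1,1]_{q,t}}$, which are different elements of $\mathbf{k}$. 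Their equality is in fact forced already by the compatibility of (\ref{Pieri12}) and (\ref{Pieri23}) (expand $(x_{12}+x_{12}^{-1})(x_{23}+x_{23}^{-1})\Psi_{0,0,0}$ in the two orders and compare the coefficients of $\Psi_{1,0,1}$), so the printed denominator must be a typo for $\big[j_1,\frac{a+3}{2}\big]_{q,t}\big[j_1-1,\frac{a+3}{2}\big]_{q,t}\big[j_2,\frac{b+3}{2}\big]_{q,t}\big[j_2-1,\frac{b+3}{2}\big]_{q,t}$; the two readings agree when $a=b$, which is why the specialization in Corollary \ref{Reduction} does not detect the difference. With the corrected denominator the transposition symmetry of the coefficients is manifest, and the rest of your argument goes through verbatim; just state that symmetry as the precise property of (\ref{Cab}) you are invoking (and note it is exactly what compatibility requires), rather than as an inspection of the formula as displayed.
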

\begin{corollary}
Usual Macdonald polynomials \cite{Macdonald} are a particular case of genus two Macdonald polynomials,
\begin{align*}
\Psi_{\ell, \ell, 0}(x_{12},x_{13},x_{23}) = c_{\ell} P_{\ell}(x_{12}), \ \ \ \Psi_{\ell, 0, \ell}(x_{12},x_{13},x_{23}) = c_{\ell} P_{\ell}(x_{13}), \ \ \ \Psi_{0, \ell, \ell}(x_{12},x_{13},x_{23}) = c_{\ell} P_{\ell}(x_{23})
\end{align*}
where $c_\ell$ is a normalization constant, which is nothing but the well-known principal specialization of $P_{\ell}$:
\begin{align*}
c_\ell := t^{\frac{\ell}{2}} \ \prod\limits_{i = 0}^{\ell - 1} \dfrac{1 - q^i t}{1 - q^i t^2} = P_\ell(t^{\frac{1}{2}})
\end{align*}
\label{Reduction}
\end{corollary}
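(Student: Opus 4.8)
The plan is to prove Corollary \ref{Reduction} by reducing the genus two Pieri rule to the classical one. First I would fix the slice $j_3 = 0$ and observe that, by admissibility, $\Psi_{j_1,j_2,0}$ is non-zero only when $j_1 = j_2 = \ell$. So I would set $R_\ell := \Psi_{\ell,\ell,0}$ and show that the three recursions collapse onto a single one-step recursion for $R_\ell$. In the recursion (\ref{Pieri12}), with $j_3 = 0$, admissibility of $(j_1 + a, j_2 + b, 0)$ forces $a = b$; Lemma 5 (the non-vanishing lemma) then tells us the coefficients with $a = -b$ vanish automatically, so only the terms $(a,b) = (+1,+1)$ and $(a,b) = (-1,-1)$ survive. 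This yields
\begin{align*}
(x_{12} + x_{12}^{-1}) R_\ell = C_{+1,+1}(\ell,\ell,0) \ R_{\ell+1} + C_{-1,-1}(\ell,\ell,0) \ R_{\ell-1}.
\end{align*}

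Next I would evaluate the two surviving coefficients explicitly from (\ref{Cab}) by substituting $j_1 = j_2 = \ell$, $j_3 = 0$, simplifying the $[\,\cdot\,,\cdot\,]_{q,t}$ symbols (using $[n,0]_{q,t} = 0$ carefully is not needed here since the arguments are generic, but $[0,0]_{q,t}=0$ and $[n,m]_{q,t}=-[-n,-m]_{q,t}$ do enter), and comparing the resulting three-term recursion with the classical $A_1$ Macdonald-Pieri rule
\begin{align*}
(x + x^{-1}) P_\ell(x) = b_\ell^{+} P_{\ell+1}(x) + b_\ell^{-} P_{\ell-1}(x),
\end{align*}
whose coefficients $b_\ell^{\pm}$ are the standard ones for $A_1$ Macdonald polynomials (ratios of the form $\frac{(1 - q^{\ell+1})(1 - q^{\ell-1}t^2)}{(1 - q^\ell t)(1 - q^{\ell-1}t)}$ type expressions, up to the usual normalization). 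I would check that $C_{-1,-1}(\ell,\ell,0) / C_{+1,+1}(\ell,\ell,0)$ matches $b_\ell^{-} / b_\ell^{+}$; this is the one genuine computation, though a short one. Since $R_\ell$ and $c_\ell P_\ell(x_{12})$ satisfy the same three-term recursion in $\ell$ with the same leading coefficient and the same initial value $R_0 = \Psi_{0,0,0} = 1 = c_0 P_0$, the identification $\Psi_{\ell,\ell,0} = c_\ell P_\ell(x_{12})$ follows by induction on $\ell$, the constant $c_\ell$ being pinned down as the product of the coefficient ratios, i.e. exactly the principal specialization $P_\ell(t^{1/2})$. The other two equalities, $\Psi_{\ell,0,\ell} = c_\ell P_\ell(x_{13})$ and $\Psi_{0,\ell,\ell} = c_\ell P_\ell(x_{23})$, then follow immediately from the $S_3$-symmetry of Corollary \ref{Symmetry} (apply the transposition exchanging the relevant indices).

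The main obstacle — such as it is — is bookkeeping rather than conceptual: one must correctly track which of the four $(a,b)$ terms survive the $j_3 = 0$ specialization, invoke Lemma 5 to kill the $a = -b$ terms without separately checking admissibility, and then carry out the algebraic simplification of $C_{\pm1,\pm1}(\ell,\ell,0)$ carefully enough to recognize the classical Pieri coefficients. A secondary point worth a sentence is that $\Psi_{\ell,\ell,0}$ really is a Laurent polynomial in $x_{12}$ alone (and not in $x_{13}, x_{23}$): this is forced because the recursions producing it from $\Psi_{0,0,0} = 1$ only ever multiply by $x_{12} + x_{12}^{-1}$ when staying inside the slice $j_3 = 0$, so no dependence on the other variables can be generated. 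With that observed, the recursion comparison is genuinely one-dimensional and the proof is complete.
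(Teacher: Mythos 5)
Your proposal is correct and follows essentially the same route as the paper: specialize the Pieri rule (\ref{Pieri12}) to $j_3=0$, observe that only the $(a,b)=(\pm1,\pm1)$ terms survive, and match the resulting three-term recursion (after renormalizing by $c_\ell$) with the classical $A_1$ Macdonald Pieri rule, with the other two identities following from the $S_3$ symmetry of Corollary \ref{Symmetry}. Your extra remark that $\Psi_{\ell,\ell,0}$ depends only on $x_{12}$ (since the recursion within the $j_3=0$ slice never introduces the other variables) is a small point the paper leaves implicit, but otherwise the arguments coincide.
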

\begin{proof}
By corollary \ref{Symmetry}, it suffices to prove one of the three specializations, the other two will follow. Consider the case $j_3 = 0$ of the genus two Pieri rule (\ref{Pieri12}). It is easy to see that it takes form
\begin{align*}
(x_{12} + x_{12}^{-1})\Psi_{\ell, \ell, 0} = t^{\frac{1}{2}} \ \dfrac{1 - q^\ell t}{1 - q^\ell t^2} \ \Psi_{\ell+1, \ell+1, 0} + t^{-\frac{1}{2}} \ \dfrac{(1 - q^\ell)(1 - q^{\ell - 1} t^2)^2}{(1 - q^\ell t)(1 - q^{\ell-1} t)^2} \ \Psi_{\ell-1, \ell-1, 0}
\end{align*}
After renormalization $\Psi_{\ell, \ell, 0} =  c_{\ell} \Psi^{\prime}_{\ell, \ell, 0}$ this recursion takes form
\begin{align*}
(x_{12} + x_{12}^{-1})\Psi^{\prime}_{\ell, \ell, 0} = \Psi^{\prime}_{\ell+1, \ell+1, 0} + \dfrac{(1 - q^\ell)(1 - q^{\ell - 1} t^2)}{(1 - q^\ell t)(1 - q^{\ell-1} t)} \ \Psi^{\prime}_{\ell-1, \ell-1, 0}
\end{align*}
This is precisely the Pieri rule for usual Macdonald polynomials, with the same initial conditions $\Psi^{\prime}_{\ell, \ell, 0} = 0$ for $\ell < 0$ and $\Psi^{\prime}_{0, 0, 0} = 1$. Consequently, $\Psi^{\prime}_{\ell, \ell, 0} = P_{\ell}(x_{12})$.
\end{proof}

\begin{lemma}
Genus two Macdonald polynomials $\Psi_{j_1,j_2,j_3}$ have a form
\label{lemm:PsiLeadingTermExpansion}
\begin{align}
\Psi_{j_1,j_2,j_3} = x_{12}^{d_3}x_{13}^{d_2}x_{23}^{d_1} \ \mathop{\sum\limits_{n_1,n_2,n_3 \geq 0}}_{n_1 + n_2 + n_3 \leq j_1 + j_2 + j_3} \ K_{n_1,n_2,n_3}(j_1,j_2,j_3) \ \left( \dfrac{x_{23}}{x_{12}x_{13}} \right)^{n_1} \ \left( \dfrac{x_{13}}{x_{12}x_{23}} \right)^{n_2} \ \left( \dfrac{x_{12}}{x_{13}x_{23}} \right)^{n_3}
\label{Ansatz}
\end{align}
where
\begin{align*}
d_{1} = \dfrac{-j_1+j_2+j_3}{2}, \ \ \ d_{2} = \dfrac{j_1-j_2+j_3}{2}, \ \ \ d_{3} = \dfrac{j_1+j_2-j_3}{2}
\end{align*}
and $K_{0,0,0}(j_1,j_2,j_3) \neq 0$ for all admissible triples $(j_1,j_2,j_3)$.
\end{lemma}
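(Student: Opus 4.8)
The plan is to prove the claimed monomial expansion by induction on $N = j_1 + j_2 + j_3$, using the genus two Pieri rule to pass from level $N$ to level $N+1$. First I would record that, by Corollary \ref{Symmetry}, it suffices to track how multiplication by $x_{12} + x_{12}^{-1}$ acts on the proposed ansatz and then invoke the $S_3$-symmetry for the other two multiplication operators; and by Lemma 2.5 (the non-vanishing lemma) every triple appearing on the right-hand side of (\ref{Pieri12}) with a nonzero coefficient is admissible, so the recursion stays inside the admissible cone. The base case $\Psi_{0,0,0} = 1$ matches (\ref{Ansatz}) with all $d_i = 0$ and $K_{0,0,0}(0,0,0) = 1 \neq 0$.

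Next I would set up the bookkeeping for the exponents. Writing $u = x_{23}/(x_{12}x_{13})$, $v = x_{13}/(x_{12}x_{23})$, $w = x_{12}/(x_{13}x_{23})$, note that $x_{12}^{d_3}x_{13}^{d_2}x_{23}^{d_1}$ together with monomials $u^{n_1}v^{n_2}w^{n_3}$ spans exactly the Laurent monomials whose total degree in $(x_{12},x_{13},x_{23})$ equals $d_1+d_2+d_3 - (n_1+n_2+n_3) = j_2+j_3+j_1 \cdot 0/\dots$ — more precisely I would check that multiplication by $x_{12}$ shifts the leading prefactor from $(j_1,j_2,j_3)$ toward the prefactors attached to $(j_1\pm1,j_2\pm1,j_3)$, and that $x_{12}^{-1}$ does the opposite, in each case up to an overall factor of $u,v,w$ or their inverses. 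Concretely $d_3(j_1{+}1,j_2{+}1,j_3) = d_3(j_1,j_2,j_3)+1$ while $d_1,d_2$ are unchanged, so $x_{12}\cdot x_{12}^{d_3}x_{13}^{d_2}x_{23}^{d_1}$ already produces the correct prefactor for the $(+1,+1)$ term with no compensating $u,v,w$; for the $(\pm1,\mp1)$ and $(-1,-1)$ terms one gets the target prefactor times a monomial in $u,v,w,u^{-1},v^{-1},w^{-1}$, and the point is that after multiplying the degree-$\le N$ polynomial part by such a monomial one still lands in the span of $u^{n_1}v^{n_2}w^{n_3}$ with $n_1+n_2+n_3 \le N+1$ and all $n_i \ge 0$. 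Verifying this last containment — that no negative powers of $u,v,w$ survive and that the total degree bound is respected — is the combinatorial heart of the argument; it comes down to the triangle inequalities $|j_1-j_2|\le j_3 \le j_1+j_2$ which guarantee $d_1,d_2,d_3 \ge 0$, so that the prefactor always has genuinely non-negative exponents and the only way to generate an apparent negative power is cancelled by the shift in $d_i$.

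Finally, for the non-vanishing claim $K_{0,0,0}(j_1,j_2,j_3) \neq 0$, I would isolate the $u^0v^0w^0$ coefficient on both sides of (\ref{Pieri12}). On the left this is $K_{0,0,0}(j_1,j_2,j_3)$ times the coefficient of the relevant monomial in $x_{12}+x_{12}^{-1}$ — but since only $x_{12}\cdot x_{12}^{d_3}$ (and not $x_{12}^{-1}$) preserves the leading prefactor structure matching $(j_1{+}1,j_2{+}1,j_3)$ without introducing $u,v,w$, the extreme monomial $x_{12}^{d_3+1}x_{13}^{d_2}x_{23}^{d_1}$ on the right receives a contribution only from the $(a,b)=(+1,+1)$ term, giving $K_{0,0,0}(j_1{+}1,j_2{+}1,j_3) = C_{+1,+1}(j_1,j_2,j_3)\,K_{0,0,0}(j_1,j_2,j_3)$. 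Since $(j_1{+}1,j_2{+}1,j_3)$ is admissible whenever $(j_1,j_2,j_3)$ is, Lemma 2.5 gives $C_{+1,+1}(j_1,j_2,j_3) \neq 0$, and this recursion together with the $S_3$-symmetry and the fact that any admissible triple can be reached from $(0,0,0)$ by a sequence of admissible $(+1,+1)$-type moves (in the three coordinate pairs) propagates $K_{0,0,0} \neq 0$ to all admissible triples. The main obstacle I anticipate is the careful verification that the extreme-monomial argument really does pick out a single term on the right-hand side — i.e.\ that the prefactors $x_{12}^{d_3}x_{13}^{d_2}x_{23}^{d_1}$ attached to the four triples $(j_1\pm1,j_2\pm1,j_3)$, once multiplied by the allowed monomials in $u,v,w$, cannot collide on the monomial $x_{12}^{d_3+1}x_{13}^{d_2}x_{23}^{d_1}$ except via $(+1,+1)$ — which requires a short but careful case analysis of how $d_1,d_2,d_3$ and the exponent shifts interact, rather than any hard computation.
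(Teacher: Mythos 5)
Your proposal follows essentially the same route as the paper: induction on $j_1+j_2+j_3$ via the Pieri rule (\ref{Pieri12}), checking that all four contributions stay inside the cone $n_1,n_2,n_3\geq 0$, $n_1+n_2+n_3\leq j_1+j_2+j_3$ attached to the new triple, and extracting the extreme monomial $x_{12}^{d_3+1}x_{13}^{d_2}x_{23}^{d_1}$ (to which only the $x_{12}\cdot\Psi_{j_1,j_2,j_3}$ term contributes) together with $C_{+1,+1}\neq 0$ from the non-vanishing lemma to propagate $K_{0,0,0}\neq 0$. Only cosmetic slips: the level increases by $2$ rather than $1$ (so the intermediate degree bound is $N+2$, not $N+1$), and the recursion should read $K_{0,0,0}(j_1{+}1,j_2{+}1,j_3)=K_{0,0,0}(j_1,j_2,j_3)/C_{+1,+1}(j_1,j_2,j_3)$, which does not affect the non-vanishing conclusion.
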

\begin{proof}
The proof is by induction in $j_1+j_2+j_3$. The base case $(j_1,j_2,j_3) = (0,0,0)$ is obvious. Assume that the lemma holds for all admissible triples $(j_1,j_2,j_3)$ such that $j_1+j_2+j_3 \leq J$. Pick any admissible triple with $(j_1+j_2+j_3) = J$. Applying the Pieri rule (\ref{Pieri12}) we get $\Psi_{j_1+1,j_2+1,j_3}$ as a ${\mathbf{k}}$-linear combination of $\Psi_{j_1-1,j_2+1,j_3}, \Psi_{j_1+1,j_2-1,j_3}, \Psi_{j_1-1,j_2-1,j_3}$ and $(x_{12} + x_{12}^{-1})\Psi_{j_1,j_2,j_3}$. Consider first $\Psi_{j_1-1,j_2+1,j_3}$. By the inductive assumption, it has the form (\ref{Ansatz}) with the leading term

\begin{align*}
x_{12}^{d_3+2}x_{13}^{d_2-2}x_{23}^{d_1-2} = x_{12}^{d_3}x_{13}^{d_2}x_{23}^{d_1} \cdot \left(\dfrac{x_{12}}{x_{13}x_{23}}\right)^2
\end{align*}
\smallskip\\
therefore, every monomial in the expansion of $\Psi_{j_1-1,j_2+1,j_3}$ has a form

\begin{align}
x_{12}^{d_3}x_{13}^{d_2}x_{23}^{d_1} \cdot \ \left( \dfrac{x_{23}}{x_{12}x_{13}} \right)^{n_1} \ \left( \dfrac{x_{13}}{x_{12}x_{23}} \right)^{n_2} \ \left( \dfrac{x_{12}}{x_{13}x_{23}} \right)^{n_3}
\label{formform}
\end{align}
\smallskip\\
where the sum $n_1+n_2+n_3$ cannot be bigger than $j_1+j_2+j_3+2$. The same holds for $\Psi_{j_1+1,j_2-1,j_3}$ and $\Psi_{j_1-1,j_2-1,j_3}$. As for $\Psi_{j_1,j_2,j_3}$, by the inductive assumption, it has the form (\ref{Ansatz}) with the leading term

\begin{align*}
x_{12}^{d_3}x_{13}^{d_2}x_{23}^{d_1} \cdot \ x_{12}^{-1}
\end{align*}
\smallskip\\
therefore every monomial in the expansion of $(x_{12} + x_{12}^{-1}) \Psi_{j_1,j_2,j_3}$ has the form (\ref{formform}) where the sum $n_1+n_2+n_3$ cannot be bigger than $j_1+j_2+j_3+2$, and the coefficient in front of $x_{12}^{d_3}x_{13}^{d_2}x_{23}^{d_1}$ is non-vanishing. We therefore proved that $\Psi_{j_1+1,j_2+1,j_3}$ has this form too.
\end{proof}

Let ${\cal H}$ be the space of all Laurent polynomials in $x_{12},x_{13},x_{23}$ (as everywhere in this paper, with coefficients over ${\mathbf{k}}$) symmetric under the ${\mathbb Z}_2^3$ group of Weyl inversions

\begin{align*}
(x_{12},x_{13},x_{23}) \ \ \ \mapsto \ \ \ (x_{12}^u,x_{13}^v,x_{23}^w), \ \ \ \ \ \ u,v,w \in \{ -1, +1 \}
\end{align*}

\begin{proposition}
Genus two Macdonald polynomials $\Psi_{j_1,j_2,j_3}$ for all admissible $(j_1,j_2,j_3)$ form a basis of ${\cal H}$.
\end{proposition}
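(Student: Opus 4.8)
The plan is to exhibit the genus two Macdonald polynomials as a triangular change of basis from the standard monomial basis of ${\cal H}$, the triangularity being supplied by the leading-term structure of Lemma \ref{lemm:PsiLeadingTermExpansion}. For $(a,b,c)\in{\mathbb Z}_{\geq 0}^3$ let $m_{a,b,c}$ denote the ${\mathbb Z}_2^3$-orbit sum of $x_{12}^ax_{13}^bx_{23}^c$; these form a ${\mathbf{k}}$-basis of ${\cal H}$ indexed by ${\mathbb Z}_{\geq 0}^3$, since every ${\mathbb Z}_2^3$-orbit in ${\mathbb Z}^3$ has a unique representative with non-negative entries. The assignment $(j_1,j_2,j_3)\mapsto(d_3,d_2,d_1)$ of Lemma \ref{lemm:PsiLeadingTermExpansion} is a bijection from admissible triples onto ${\mathbb Z}_{\geq 0}^3$, with inverse $j_1=d_2+d_3$, $j_2=d_1+d_3$, $j_3=d_1+d_2$. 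Hence it suffices to show that, when the index set is ordered by the value of $d_1+d_2+d_3=\tfrac12(j_1+j_2+j_3)$, the matrix expressing $\Psi_{j_1,j_2,j_3}$ in the basis $\{m_{a,b,c}\}$ is triangular with non-vanishing diagonal.

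I would first check that $\Psi_{j_1,j_2,j_3}\in{\cal H}$. By the first Lemma, $C_{+1,+1}(j_1,j_2,j_3)\neq 0$ for every admissible triple (since then $(j_1+1,j_2+1,j_3)$ is again admissible), so the Pieri rule (\ref{Pieri12}) can be solved for $\Psi_{j_1+1,j_2+1,j_3}$ in terms of $(x_{12}+x_{12}^{-1})\Psi_{j_1,j_2,j_3}$, $\Psi_{j_1-1,j_2+1,j_3}$, $\Psi_{j_1+1,j_2-1,j_3}$ and $\Psi_{j_1-1,j_2-1,j_3}$. A short verification shows that every admissible triple with $j_1+j_2+j_3\geq 2$ arises from one of strictly smaller $j_1+j_2+j_3$ by one of the three diagonal moves $(j_1,j_2,j_3)\mapsto(j_1+1,j_2+1,j_3),\ (j_1+1,j_2,j_3+1),\ (j_1,j_2+1,j_3+1)$. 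Hence, by induction on $j_1+j_2+j_3$, every $\Psi_{j_1,j_2,j_3}$ is produced from $\Psi_{0,0,0}=1$ by ${\mathbf{k}}$-linear combinations and by multiplications by the Weyl-invariant operators $x_{ij}+x_{ij}^{-1}$, all of which preserve ${\cal H}$.

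The heart of the argument is the estimate: if the monomial $x_{12}^{e_1}x_{13}^{e_2}x_{23}^{e_3}$ occurs in $\Psi_{j_1,j_2,j_3}$, then
\[
|e_1|+|e_2|\leq j_1,\qquad |e_1|+|e_3|\leq j_2,\qquad |e_2|+|e_3|\leq j_3,
\]
all three being saturated simultaneously only for the dominant monomial $(e_1,e_2,e_3)=(d_3,d_2,d_1)$. Indeed, by Lemma \ref{lemm:PsiLeadingTermExpansion} a monomial occurs only if the exponents $n_1,n_2,n_3$ of $x_{23}/(x_{12}x_{13}),\ x_{13}/(x_{12}x_{23}),\ x_{12}/(x_{13}x_{23})$ are non-negative integers, and a one-line computation with the formulas for $d_1,d_2,d_3$ gives $e_1+e_2=j_1-2n_1$, $e_1+e_3=j_2-2n_2$, $e_2+e_3=j_3-2n_3$, so $e_1+e_2\leq j_1$ and likewise for the others. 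Applying this to each of the $2^3$ Weyl-inverted monomials $x_{12}^{\pm e_1}x_{13}^{\pm e_2}x_{23}^{\pm e_3}$, which also occur in $\Psi_{j_1,j_2,j_3}$ by Weyl invariance, gives the absolute-value bounds; adding the three yields $|e_1|+|e_2|+|e_3|\leq\tfrac12(j_1+j_2+j_3)=d_1+d_2+d_3$, with equality forcing $(|e_1|,|e_2|,|e_3|)=(d_3,d_2,d_1)$. Consequently
\[
\Psi_{j_1,j_2,j_3}=K_{0,0,0}(j_1,j_2,j_3)\,m_{d_3,d_2,d_1}+\!\!\!\sum_{a+b+c\,<\,d_1+d_2+d_3}\!\!\!\kappa_{a,b,c}\,m_{a,b,c},
\]
where the diagonal coefficient $K_{0,0,0}(j_1,j_2,j_3)$ --- which is the coefficient of $m_{d_3,d_2,d_1}$, equivalently (by Weyl invariance) of the monomial $x_{12}^{d_3}x_{13}^{d_2}x_{23}^{d_1}$ --- is non-zero by Lemma \ref{lemm:PsiLeadingTermExpansion}. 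This is the desired triangularity: restricting to $d_1+d_2+d_3\leq N$ gives a finite triangular transition matrix with invertible diagonal, so for each $N$ the polynomials $\Psi_{j_1,j_2,j_3}$ with $\tfrac12(j_1+j_2+j_3)\leq N$ and the monomials $m_{a,b,c}$ with $a+b+c\leq N$ span the same subspace; letting $N\to\infty$ shows the $\Psi_{j_1,j_2,j_3}$ span ${\cal H}$, while linear independence follows at once from the same triangularity.

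The main obstacle I anticipate is the third step: upgrading the one-sided conditions $n_i\geq 0$ coming from Lemma \ref{lemm:PsiLeadingTermExpansion} to the symmetric bounds $|e_1|+|e_2|\leq j_1$, and recognizing that the right grading is the total degree $a+b+c$ of the \emph{dominant} monomial --- a lexicographic or Newton-polytope order would fail to make $x_{12}^{d_3}x_{13}^{d_2}x_{23}^{d_1}$ the leading term. This is exactly where one needs $\Psi_{j_1,j_2,j_3}\in{\cal H}$, so that its $x_{ij}\mapsto x_{ij}^{-1}$-images lie in its support. The remaining ingredients --- the basis property of $\{m_{a,b,c}\}$, the bijection of index sets, and the inductive solvability of the Pieri recursion --- are routine.
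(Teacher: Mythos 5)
Your argument is correct, and it differs from the paper's mainly in how spanning is obtained. The paper gets spanning softly: ${\cal H}$ is spanned by the monomials $p_{12}^{m_{12}}p_{13}^{m_{13}}p_{23}^{m_{23}}$ in $p_{ij}=x_{ij}+x_{ij}^{-1}$, and the ${\mathbf{k}}$-span of the $\Psi_{j_1,j_2,j_3}$ contains $1$ and is closed under multiplication by each $p_{ij}$ by the Pieri rules (\ref{Pieri}), hence contains ${\cal H}$; linear independence is then read off from Lemma \ref{lemm:PsiLeadingTermExpansion}, since the top monomial of $\Psi_{j_1,j_2,j_3}$ in total degree is $x_{12}^{d_3}x_{13}^{d_2}x_{23}^{d_1}$ (each factor in (\ref{Ansatz}) has total degree $-1$) and these are pairwise distinct. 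You instead prove a sharpened leading-term statement --- the absolute-value bounds $|e_1|+|e_2|\le j_1$, $|e_1|+|e_3|\le j_2$, $|e_2|+|e_3|\le j_3$, obtained by combining Lemma \ref{lemm:PsiLeadingTermExpansion} with Weyl invariance --- which makes the transition matrix to the orbit-sum basis $m_{a,b,c}$ graded-triangular with invertible diagonal, so spanning and independence come out of a single computation. Your route costs a little more (the upgrade to absolute values, and implicitly the injectivity of $(n_1,n_2,n_3)\mapsto(e_1,e_2,e_3)$ if you want the diagonal entry to be exactly $K_{0,0,0}$, though only its nonvanishing is needed), but it buys an explicit triangular change of basis against a genuine basis of ${\cal H}$, and it makes explicit two points the paper leaves implicit: that $\Psi_{j_1,j_2,j_3}\in{\cal H}$ at all (your induction via solvability of the Pieri rule in the $(+1,+1)$ direction, using the first Lemma, together with reachability of every admissible triple by the three diagonal moves) and the bijection between admissible triples and ${\mathbb Z}_{\ge 0}^3$. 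Both arguments are sound; the paper's is shorter, yours is more self-contained and yields a slightly stronger triangularity statement.
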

\begin{proof}
Let us first prove that ${\cal H}$ is spanned by genus two Macdonald polynomials. For this, note that ${\cal H}$ is spanned by the set of monomials $\{ \ p_{12}^{m_{12}} p_{13}^{m_{13}} p_{23}^{m_{23}} | \ m_{12}, m_{13}, m_{23} \geq 0 \ \}$ where $p_{ab} = x_{ab} + x_{ab}^{-1}$. This set contains the generators $p_{12} \propto \Psi_{1,1,0}, \ p_{13} \propto \Psi_{1,0,1}, \ p_{23} \propto \Psi_{0,1,1}$. Since span($\Psi_{j_1,j_2,j_3}$) is closed under multiplication by $p_{12},p_{13},p_{23}$ (see (\ref{Pieri})) genus two Macdonald polynomials span ${\cal H}$. By Lemma \ref{lemm:PsiLeadingTermExpansion} their leading terms in total degree are all different, thus they are linearly independent and form a basis of $\mathcal H$.
\end{proof}

\pagebreak

\section{Algebra of Knot operators}

Denote by $\mathcal F:=\mathbf{k}(x_{12},x_{23},x_{13})\big[\hat\delta_{12},\hat\delta_{23}, \hat\delta_{13}\big]$ the algebra of all $q$-difference operators with coefficients to be rational functions in $x_{12},x_{23},x_{13}$ over $\mathbf{k}$. Here
\begin{align*}
\hat\delta_{12}f(x_{12},x_{23},x_{13}) &=f\left(q^{\frac12}x_{12},x_{23},x_{13}\right)\quad\textrm{for all}\quad f\in\mathbf{k}(x_{12},x_{23},x_{13})
\end{align*}
and the other two: $\hat\delta_{23}$ and $\hat\delta_{13}$ are obtained by permutation of indexes.

\begin{definition}
Let ${\hat {\cal O}}_{A_1}, {\hat {\cal O}}_{A_2}, {\hat {\cal O}}_{A_3}$ be the following $q$-difference operators acting on ${\mathbf{k}}(x_{12},x_{13},x_{23})$

\begin{subequations}
\begin{align}
\label{Hamiltonian1}{\hat {\cal O}}_{A_1} \ = \ \sum\limits_{a,b \in \{\pm 1\}} \ a b \ \dfrac{(1 - t^{\frac{1}{2}} x_{23}^{+1} x_{12}^a x_{13}^b)(1 - t^{\frac{1}{2}} x_{23}^{-1} x_{12}^a x_{13}^b)}{t^{\frac{1}{2}} x_{12}^{a} x_{13}^b (x_{12}^{+1} - x_{12}^{-1})(x_{13}^{+1} - x_{13}^{-1})} \ {\hat \delta}_{12}^{a} {\hat \delta}_{13}^{b} \\
\nonumber \\
\label{Hamiltonian2}{\hat {\cal O}}_{A_2} \ = \ \sum\limits_{a,b \in \{\pm 1\}} \ a b \ \dfrac{(1 - t^{\frac{1}{2}} x_{13}^{+1} x_{12}^a x_{23}^b)(1 - t^{\frac{1}{2}} x_{13}^{-1} x_{12}^a x_{23}^b)}{t^{\frac{1}{2}} x_{12}^{a} x_{23}^b (x_{12}^{+1} - x_{12}^{-1})(x_{23}^{+1} - x_{23}^{-1})} \ {\hat \delta}_{12}^{a} {\hat \delta}_{23}^{b} \\
\nonumber \\
\label{Hamiltonian3}{\hat {\cal O}}_{A_3} \ = \ \sum\limits_{a,b \in \{\pm 1\}} \ a b \ \dfrac{(1 - t^{\frac{1}{2}} x_{12}^{+1} x_{13}^a x_{23}^b)(1 - t^{\frac{1}{2}} x_{12}^{-1} x_{13}^a x_{23}^b)}{t^{\frac{1}{2}} x_{13}^{a} x_{23}^b (x_{13}^{+1} - x_{13}^{-1})(x_{23}^{+1} - x_{23}^{-1})} \ {\hat \delta}_{13}^{a} {\hat \delta}_{23}^{b}
\end{align}
\label{Hamiltonian}
\end{subequations}
\smallskip\\
and ${\hat {\cal O}}_{B_{12}}, {\hat {\cal O}}_{B_{13}}, {\hat {\cal O}}_{B_{23}}$ be the following multiplication operators:

\begin{subequations}
\begin{align}
\label{Mult1}{\hat {\cal O}}_{B_{12}} \ = \ x_{12} + x_{12}^{-1} \\[10pt]
\label{Mult2}{\hat {\cal O}}_{B_{13}} \ = \ x_{13} + x_{13}^{-1} \\[10pt]
\label{Mult3}{\hat {\cal O}}_{B_{23}} \ = \ x_{23} + x_{23}^{-1}
\end{align}
\label{Mult}
\end{subequations}
\end{definition}

\begin{definition}
Let $\mathcal A:=\mathbf{k}[\hat{\mathcal O}_{A_1},\hat{\mathcal O}_{A_2},\hat{\mathcal O}_{A_3},\hat{\mathcal O}_{B_{12}},\hat{\mathcal O}_{B_{23}},\hat{\mathcal O}_{B_{13}}] \subset \mathcal F$ be the algebra generated by the above six operators. We will call $\mathcal A$ the algebra of genus two knot operators.
\end{definition}

\begin{remark}
The action of the operator ${\hat {\cal O}}_{A_1}$ on functions that depend only on $x_{12}$ is identical to the action of the Macdonald operator ${\hat {\cal O}}_{A}$ of the spherical DAHA (\ref{OAOB}). Therefore the action of the subalgebra $\mathbf{k}[\hat{\mathcal O}_{A_1},\hat{\mathcal O}_{B_{12}}] \subset \mathcal A$ on functions that depend only on $x_{12}$ is isomorphic to the action of the $A_1$ spherical DAHA. By $S_3$ symmetry, a similar statement holds for $x_{13}$ and $x_{23}$.
\end{remark}

\begin{lemma}
The algebra ${\cal A}$ is not free: among others, the following relations hold for all $1\leq i<j\leq3$

\begin{subequations}
\begin{align}
[ {\hat{\cal O}}_{A_1}, {\hat{\cal O}}_{A_2} ]= [ {\hat{\cal O}}_{A_2}, {\hat{\cal O}}_{A_3} ]=[ {\hat{\cal O}}_{A_1}, {\hat{\cal O}}_{A_3} ]=0
\end{align}
\begin{align}
[\hat{\mathcal O}_{B_{12}},\hat{\mathcal O}_{B_{23}}]=[\hat{\mathcal O}_{B_{12}},\hat{\mathcal O}_{B_{13}}]=[\hat{\mathcal O}_{B_{23}},\hat{\mathcal O}_{B_{13}}] = 0
\label{eq:KnotAlgebraCommutativity1}
\end{align}
\begin{align}
[\hat{\mathcal O}_{A_1},\hat{\mathcal O}_{B_{23}}]=[\hat{\mathcal O}_{A_2},\hat{\mathcal O}_{B_{13}}]=[\hat{\mathcal O}_{A_3},\hat{\mathcal O}_{B_{12}}]=0
\label{eq:KnotAlgebraCommutativity2}
\end{align}
\begin{align}
& \big(q^{\frac{1}{2}} - q^{-\frac{1}{2}}\big)^2 \ {\hat{\cal O}}_{B_{ij}} = -{\hat{\cal O}}_{A_i} {\hat{\cal O}}_{A_i} {\hat{\cal O}}_{B_{ij}} + \big(q^{\frac{1}{2}} + q^{-\frac{1}{2}}\big) \ {\hat{\cal O}}_{A_i} {\hat{\cal O}}_{B_{ij}} {\hat{\cal O}}_{A_i} - {\hat{\cal O}}_{B_{ij}} {\hat{\cal O}}_{A_i} {\hat{\cal O}}_{A_i}
\label{eq:KnotAlgebraQSerreABA} \\
\nonumber \\
& \big(q^{\frac{1}{2}} - q^{\frac{-1}{2}}\big)^2 \ {\hat{\cal O}}_{A_i} = -{\hat{\cal O}}_{B_{ij}} {\hat{\cal O}}_{B_{ij}} {\hat{\cal O}}_{A_i} + \big(q^{\frac{1}{2}} + q^{\frac{-1}{2}}\big) \ {\hat{\cal O}}_{B_{ij}} {\hat{\cal O}}_{A_i} {\hat{\cal O}}_{B_{ij}} - {\hat{\cal O}}_{A_i} {\hat{\cal O}}_{B_{ij}} {\hat{\cal O}}_{B_{ij}}
\label{eq:KnotAlgebraQSerreBAB}
\end{align}
\begin{align}
\hat{\mathcal O}_{B_{13}}\hat{\mathcal O}_{A_1}\hat{\mathcal O}_{B_{12}}-\hat{\mathcal O}_{B_{12}}\hat{\mathcal O}_{A_1}\hat{\mathcal O}_{B_{13}}=\hat{\mathcal O}_{A_2}\hat{\mathcal O}_{B_{23}}\hat{\mathcal O}_{A_3}-\hat{\mathcal O}_{A_3}\hat{\mathcal O}_{B_{23}}\hat{\mathcal O}_{A_2}
\label{eq:KnotAlgebraExtra1}
\end{align}
\begin{align}
\hat{\mathcal O}_{A_1}\hat{\mathcal O}_{B_{12}}\hat{\mathcal O}_{B_{13}}-&\big(q^{\frac12}+q^{-\frac12}\big)\hat{\mathcal O}_{B_{12}}\hat{\mathcal O}_{A_1}\hat{\mathcal O}_{B_{13}}+\hat{\mathcal O}_{B_{13}}\hat{\mathcal O}_{B_{12}}\hat{\mathcal O}_{A_1}=\nonumber\\[5pt]
&\hat{\mathcal O}_{A_3}\hat{\mathcal O}_{A_2}\hat{\mathcal O}_{B_{23}}-\big(q^{\frac12}+q^{-\frac12}\big)\hat{\mathcal O}_{A_3}\hat{\mathcal O}_{B_{23}}\hat{\mathcal O}_{A_2}+\hat{\mathcal O}_{B_{23}}\hat{\mathcal O}_{A_2}\hat{\mathcal O}_{A_3}
\label{eq:KnotAlgebraExtra2}
\end{align}
\label{eq:KnotAlgebraRelations}
\end{subequations}
\end{lemma}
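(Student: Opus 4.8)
The plan is to verify each relation by a direct computation in the localized $q$-difference algebra $\mathcal F = \mathbf{k}(x_{12},x_{13},x_{23})[\hat\delta_{12}^{\pm 1},\hat\delta_{13}^{\pm 1},\hat\delta_{23}^{\pm 1}]$, exploiting the $S_3$ symmetry of the construction to reduce the number of independent checks. The relations fall into three groups of increasing difficulty. First, the commutativity relations $[\hat{\mathcal O}_{B_{ij}},\hat{\mathcal O}_{B_{kl}}]=0$ are obvious, since all the $\hat{\mathcal O}_B$'s are multiplication operators. The relations $[\hat{\mathcal O}_{A_i},\hat{\mathcal O}_{B_{kl}}]=0$ when $\{k,l\}$ is disjoint from the index pair carried by $\hat{\mathcal O}_{A_i}$ (e.g.\ $[\hat{\mathcal O}_{A_1},\hat{\mathcal O}_{B_{23}}]$) hold because $\hat{\mathcal O}_{A_1}$ only involves $\hat\delta_{12},\hat\delta_{13}$, hence commutes with multiplication by any function of $x_{23}$ alone. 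By the $S_3$ symmetry it suffices to check one representative of each such family.

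Next, the two ``$q$-Serre'' relations (\ref{eq:KnotAlgebraQSerreABA}) and (\ref{eq:KnotAlgebraQSerreBAB}) involve only $\hat{\mathcal O}_{A_i}$ and $\hat{\mathcal O}_{B_{ij}}$ for a single pair $(i,j)$; by the Remark, on functions of $x_{ij}$ alone these reduce to the known relations of the $A_1$ spherical DAHA, so the content is that the same relation persists as an operator identity on the full three-variable space. I would prove this by writing $\hat{\mathcal O}_{A_i}$ in the form $A_+(x)\hat\delta_{ij} + A_-(x)\hat\delta_{ij}^{-1} + (\text{terms in the other }\hat\delta)$ and noting that, modulo multiplication operators, commuting $\hat{\mathcal O}_{B_{ij}}=x_{ij}+x_{ij}^{-1}$ past $\hat\delta_{ij}^{\pm 1}$ only shifts $x_{ij}$, so the cubic expressions collapse exactly as in the one-variable DAHA computation; the coefficients of every $\hat\delta$-monomial must be checked to vanish, which is a finite rational-function identity in $\mathbf{k}$. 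Again $S_3$ symmetry reduces this to one value of $(i,j)$.

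The genuinely new relations are the commutativity of the three Macdonald-type operators $[\hat{\mathcal O}_{A_i},\hat{\mathcal O}_{A_j}]=0$ and the ``mixed'' relations (\ref{eq:KnotAlgebraExtra1}), (\ref{eq:KnotAlgebraExtra2}). For commutativity of the $\hat{\mathcal O}_{A_i}$, the cleanest route is \emph{not} to expand the operators but to use that, by the genus two Pieri rule (\ref{Pieri}) together with Lemma \ref{lemm:PsiLeadingTermExpansion} and the Proposition that the $\Psi_{j_1,j_2,j_3}$ form a basis of $\mathcal H$, one can identify the action of each $\hat{\mathcal O}_{A_i}$ on this basis: I expect $\hat{\mathcal O}_{A_1}$ to be (up to normalization) the operator inverting the Pieri rule (\ref{Pieri12}), i.e.\ diagonal in an appropriate eigenbasis, so that $[\hat{\mathcal O}_{A_1},\hat{\mathcal O}_{A_2}]=0$ follows from compatibility of the Pieri rules (the Proposition on compatibility) — exactly the mechanism by which Macdonald operators commute. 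One still has to argue that an operator identity in $\mathcal F$ that holds on all of $\mathcal H\subset \mathbf{k}(x_{12},x_{13},x_{23})$ holds identically; this is fine because $\mathcal H$ is Zariski-dense and the operators have rational coefficients. Alternatively, and this is the fallback I would actually carry out, one verifies $[\hat{\mathcal O}_{A_i},\hat{\mathcal O}_{A_j}]=0$ and (\ref{eq:KnotAlgebraExtra1})--(\ref{eq:KnotAlgebraExtra2}) by brute force: expand both sides into $\sum c_{\vec\alpha}(x)\,\hat\delta_{12}^{\alpha_{12}}\hat\delta_{13}^{\alpha_{13}}\hat\delta_{23}^{\alpha_{23}}$, collect the (finitely many) $\hat\delta$-monomials, and check that each coefficient — a rational function in $x_{12},x_{13},x_{23}$ over $\mathbf{k}$ — vanishes identically. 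The main obstacle is precisely the size of this last computation: the shift operators do not commute with the rational coefficients, so passing everything to a common normal order produces lengthy expressions, and relation (\ref{eq:KnotAlgebraExtra2}) in particular mixes $ABB$-type and $AAB$-type terms across \emph{different} index pairs, so no single-pair reduction applies and one cannot shrink it by symmetry alone. This is the step where a computer algebra verification is essentially unavoidable, and I would present it as such.
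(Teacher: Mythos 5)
Your proposal is correct and follows essentially the same route as the paper, whose proof of this lemma is precisely the brute-force verification you describe as your fallback: apply both sides of each relation to an arbitrary $f\in\mathbf k(x_{12},x_{23},x_{13})$ (equivalently, normal-order and compare the rational-function coefficients of each $\hat\delta$-monomial) and check the resulting identities in $\mathbf k(x_{12},x_{23},x_{13})$ directly. The structural shortcuts you add (multiplication operators commute, $\hat{\mathcal O}_{A_1}$ contains no shift in $x_{23}$, $S_3$ reduction) are sound and only lighten that computation.
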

\begin{proof}
For each relation apply both sides to an arbitrary $f\in\mathbf k(x_{12},x_{23},x_{13}),$ the result will follow by a direct computation in $\mathbf k(x_{12},x_{23},x_{13})$.
\end{proof}

\begin{proposition}
Genus two Macdonald polynomials are common eigenfunctions of ${\hat {\cal O}}_{A_1}, {\hat {\cal O}}_{A_2}, {\hat {\cal O}}_{A_3}$
\begin{align}
\hat{\mathcal O}_{A_k}\Psi_{j_1,j_2,j_3}=\left(q^{\frac {j_k}2}t^{\frac12}+q^{-\frac {j_k}2}t^{-\frac12}\right)\Psi_{j_1,j_2,j_3}
\label{eq:EigenvaluesOA}
\end{align}
\label{propeigen}
\end{proposition}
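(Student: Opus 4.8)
The plan is to prove (\ref{eq:EigenvaluesOA}) for $k = 1$; the cases $k = 2, 3$ then follow by the $S_3$ symmetry of Corollary \ref{Symmetry} together with the corresponding symmetry of the operators $\hat{\mathcal O}_{A_k}$. I would proceed by induction on $j_1 + j_2 + j_3$, using the genus two Pieri rule (\ref{Pieri}) as the engine. The base case $(0,0,0)$ is immediate since $\hat{\mathcal O}_{A_1} \cdot 1 = (q^0 t^{1/2} + q^0 t^{-1/2}) \cdot 1$, which is a one-line check on (\ref{Hamiltonian1}).

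For the inductive step, the key observation is that $\hat{\mathcal O}_{A_1}$ commutes with the multiplication operators $\hat{\mathcal O}_{B_{12}}, \hat{\mathcal O}_{B_{13}}, \hat{\mathcal O}_{B_{23}}$ in a controlled way. Specifically, I would use relation (\ref{eq:KnotAlgebraCommutativity2}), $[\hat{\mathcal O}_{A_1}, \hat{\mathcal O}_{B_{23}}] = 0$, which says that applying $\hat{\mathcal O}_{A_1}$ after multiplying by $x_{23} + x_{23}^{-1}$ is the same as first applying $\hat{\mathcal O}_{A_1}$ and then multiplying. Combined with the Pieri rule (\ref{Pieri23}), which expresses $(x_{23} + x_{23}^{-1})\Psi_{j_1,j_2,j_3}$ as a combination of $\Psi_{j_1, j_2 \pm 1, j_3 \pm 1}$ — all of which, on the right-hand side, either have strictly smaller index sum in some component or can be reached by induction — one can set up a recursion. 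Concretely: pick an admissible triple with $j_1 + j_2 + j_3 = J$ and $j_2, j_3 \geq 1$ (the edge cases $j_2 = 0$ or $j_3 = 0$ are handled separately, e.g. via Corollary \ref{Reduction} for the reduction to ordinary Macdonald polynomials); then $\Psi_{j_1, j_2, j_3}$ appears with a nonzero coefficient $C_{+1,+1}(j_2, j_3, j_1) \neq 0$ (Lemma) in the expansion of $(x_{23}+x_{23}^{-1})\Psi_{j_1, j_2 - 1, j_3 - 1}$. Applying $\hat{\mathcal O}_{A_1}$ to both sides of that Pieri identity and using $[\hat{\mathcal O}_{A_1}, \hat{\mathcal O}_{B_{23}}] = 0$ together with the inductive hypothesis that $\hat{\mathcal O}_{A_1}$ acts on $\Psi_{j_1, j_2', j_3'}$ (with $j_2' + j_3' < j_2 + j_3$) by the scalar $q^{j_1/2}t^{1/2} + q^{-j_1/2}t^{-1/2}$ — note the eigenvalue depends only on $j_1$, which is unchanged along (\ref{Pieri23}) — one concludes that $\hat{\mathcal O}_{A_1}\Psi_{j_1, j_2, j_3}$ equals $(q^{j_1/2}t^{1/2} + q^{-j_1/2}t^{-1/2})\Psi_{j_1, j_2, j_3}$ plus possibly a multiple of $\Psi_{j_1, j_2 - 2, j_3}$ and $\Psi_{j_1, j_2, j_3 - 2}$; the latter two terms must vanish because, again by the inductive hypothesis, $\hat{\mathcal O}_{A_1}$ already acts diagonally on those lower polynomials with the \emph{same} eigenvalue, so matching coefficients forces their contribution to cancel. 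An alternative, perhaps cleaner, route is to verify directly from the ansatz of Lemma \ref{lemm:PsiLeadingTermExpansion}: since $\hat{\mathcal O}_{A_1}$ preserves the space $\mathcal H$ and the leading total-degree term of $\Psi_{j_1,j_2,j_3}$ is unique among the basis, it suffices to check that $\hat{\mathcal O}_{A_1}$ maps the leading term $x_{12}^{d_3}x_{13}^{d_2}x_{23}^{d_1}$ (times the $K_{0,0,0}$ coefficient) to $(q^{j_1/2}t^{1/2}+q^{-j_1/2}t^{-1/2})$ times itself modulo lower terms, which is a short monomial computation on (\ref{Hamiltonian1}), and then use that $\hat{\mathcal O}_{A_1}$ commutes with $\hat{\mathcal O}_{A_2}, \hat{\mathcal O}_{A_3}$ and that the $\Psi$'s are determined by their leading terms.

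The main obstacle I anticipate is \textbf{bookkeeping at the boundary of the admissibility cone}: when $j_2 = 0$, $j_3 = 0$, or $|j_1 - j_2| = j_3$ etc., the triple $\Psi_{j_1, j_2 - 1, j_3 - 1}$ used to pull down the induction may itself be non-admissible, so one cannot always reach $\Psi_{j_1,j_2,j_3}$ from below via (\ref{Pieri23}) alone. In those cases I would instead use a different one of the three Pieri rules (\ref{Pieri12}), (\ref{Pieri13}), (\ref{Pieri23}) — whichever keeps $j_1$ fixed while still descending in total degree — or fall back on the explicit leading-term argument. The other potential subtlety is ensuring that the eigenvalue genuinely depends only on $j_1$ and not on $j_2, j_3$: this is precisely why the Pieri rule (\ref{Pieri23}), which does not touch the first index, is the natural tool, and it is consistent with the boundary reduction $\Psi_{\ell,\ell,0} = c_\ell P_\ell(x_{12})$ where the known Macdonald eigenvalue is $q^{\ell/2}t^{1/2} + q^{-\ell/2}t^{-1/2}$ (matching $j_1 = \ell$). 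Once $k = 1$ is done, invoking Corollary \ref{Symmetry} and the manifest index-permutation symmetry of the family $\{\hat{\mathcal O}_{A_k}\}$ closes out $k = 2, 3$ with no further computation.
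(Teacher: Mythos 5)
Your route is genuinely different from the paper's: the paper (Appendix \ref{AppA}) proves the statement by expanding $\Psi_{j_1,j_2,j_3}$ in products of ordinary Macdonald polynomials $P_n(x_{12})P_m(x_{13})P_k(x_{23})$, computing the matrix of $\hat{\mathcal O}_{A_1}$ in that basis via the Macdonald generating function and explicit rational-function identities, and matching it against the matrix of the diagonal operator with eigenvalues $\lambda_{j_1}:=q^{j_1/2}t^{1/2}+q^{-j_1/2}t^{-1/2}$. Your interior induction step, by contrast, is short and clean and is correct as far as it goes: applying $\hat{\mathcal O}_{A_1}$ to the Pieri identity (\ref{Pieri23}) for $\Psi_{j_1,j_2-1,j_3-1}$, using $[\hat{\mathcal O}_{A_1},\hat{\mathcal O}_{B_{23}}]=0$ from (\ref{eq:KnotAlgebraCommutativity2}) and the fact that the claimed eigenvalue depends only on $j_1$, isolates $C_{+1,+1}(j_2-1,j_3-1,j_1)\,\hat{\mathcal O}_{A_1}\Psi_{j_1,j_2,j_3}$ on one side against $\lambda_{j_1}\,C_{+1,+1}(j_2-1,j_3-1,j_1)\,\Psi_{j_1,j_2,j_3}$ on the other, with $C_{+1,+1}\neq 0$ guaranteed by the admissibility lemma (in fact all sub-leading terms cancel exactly by the inductive hypothesis, not just the two you single out).

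The genuine gap is the base of the induction, and it is larger than you allow for. The descent requires $(j_1,j_2-1,j_3-1)$ to be admissible, which fails precisely when $j_1=j_2+j_3$ --- an infinite family of triples such as $(2,1,1)$, $(4,2,2)$, $(4,3,1)$, not merely the cases $j_2=0$ or $j_3=0$. None of your proposed fallbacks closes this. First, Corollary \ref{Reduction} covers only $(\ell,\ell,0)$ and $(\ell,0,\ell)$. Second, there is no ``different Pieri rule that keeps $j_1$ fixed'': (\ref{Pieri23}) is the only one, while (\ref{Pieri12}) and (\ref{Pieri13}) change $j_1$ and their multiplication operators do not commute with $\hat{\mathcal O}_{A_1}$, so the cancellation you rely on is unavailable (one would have to bring in the $q$-Serre-type relations (\ref{eq:KnotAlgebraQSerreABA})--(\ref{eq:KnotAlgebraQSerreBAB}), and the eigenvalues would then differ term by term). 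Third, the leading-term argument from Lemma \ref{lemm:PsiLeadingTermExpansion} at best shows that $\hat{\mathcal O}_{A_1}$ acts triangularly with the correct diagonal entry; triangularity plus the right diagonal does not force the off-diagonal contributions to vanish, and since the putative spectrum is highly degenerate (the eigenvalue depends only on $j_1$) you cannot upgrade triangularity to diagonality by separating eigenvalues, nor is it established anywhere that $\Psi_{j_1,j_2,j_3}$ is determined by its leading term. As written, therefore, your proof covers the interior of the admissibility cone but not its boundary $j_1=j_2+j_3$, and those missing cases are exactly where some new input --- such as the paper's explicit expansion in products of ordinary Macdonald polynomials --- is required.
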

\begin{proof}
See Appendix \ref{AppA}.
\end{proof}

\begin{corollary}
Algebra of knot operators is acting on the space of genus two symmetric polynomials $\mathcal H$.
\end{corollary}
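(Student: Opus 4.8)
The plan is to reduce the statement to a check on generators. Regard $\mathcal H$ as a $\mathbf{k}$-subspace of $\mathbf{k}(x_{12},x_{13},x_{23})$, on which $\mathcal F$ (hence $\mathcal A$) acts in the obvious way. The set of $\mathbf{k}$-linear endomorphisms of $\mathbf{k}(x_{12},x_{13},x_{23})$ that map $\mathcal H$ into $\mathcal H$ is a unital $\mathbf{k}$-subalgebra: it is closed under composition and under $\mathbf{k}$-linear combinations, and it contains the identity. Therefore, to prove that all of $\mathcal A$ acts on $\mathcal H$ it suffices to verify that each of the six generators $\hat{\mathcal O}_{A_1},\hat{\mathcal O}_{A_2},\hat{\mathcal O}_{A_3},\hat{\mathcal O}_{B_{12}},\hat{\mathcal O}_{B_{13}},\hat{\mathcal O}_{B_{23}}$ sends $\mathcal H$ into itself.

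For the multiplication operators $\hat{\mathcal O}_{B_{ij}}$ there is essentially nothing to do: $\hat{\mathcal O}_{B_{ij}}$ is multiplication by $x_{ij}+x_{ij}^{-1}$, which is itself an element of $\mathcal H$, and $\mathcal H$ is closed under multiplication, since the product of two Laurent polynomials invariant under the ${\mathbb Z}_2^3$ group of Weyl inversions is again invariant. (Alternatively, the genus two Pieri rule (\ref{Pieri}) already displays $\hat{\mathcal O}_{B_{ij}}\Psi_{j_1,j_2,j_3}$ as a finite $\mathbf{k}$-linear combination of genus two Macdonald polynomials, hence an element of $\mathcal H$.)

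The only genuine content lies with the difference operators $\hat{\mathcal O}_{A_k}$: these are elements of $\mathcal F$ whose coefficients are honestly rational in $x_{12},x_{13},x_{23}$, so it is not a priori clear that applying one to a symmetric Laurent polynomial yields a Laurent polynomial at all, let alone a symmetric one. Here I would invoke the two results recorded just above the statement: first, the Proposition asserting that $\{\Psi_{j_1,j_2,j_3}\}$, over all admissible triples, is a $\mathbf{k}$-basis of $\mathcal H$; and second, Proposition \ref{propeigen}, whose eigenvalue formula (\ref{eq:EigenvaluesOA}) shows that $\hat{\mathcal O}_{A_k}$ acts diagonally on that basis with eigenvalue $q^{\frac{j_k}{2}}t^{\frac12}+q^{-\frac{j_k}{2}}t^{-\frac12}\in\mathbf{k}$. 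Expanding an arbitrary element of $\mathcal H$ in the $\Psi$-basis (a finite combination, by the basis property) and applying $\hat{\mathcal O}_{A_k}$ term by term then shows $\hat{\mathcal O}_{A_k}(\mathcal H)\subseteq\mathcal H$. Combining this with the previous paragraph, all six generators preserve $\mathcal H$, and hence so does $\mathcal A$. I do not expect a real obstacle here; the one point worth flagging is that the argument is not self-contained — it rests entirely on Proposition \ref{propeigen} (proved in the appendix) and on the basis statement, since without the eigenvalue identity a rational $q$-difference operator would have no reason to preserve the lattice of symmetric Laurent polynomials.
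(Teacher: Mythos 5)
Your proposal is correct and follows essentially the same route as the paper: reduce to the six generators, handle the multiplication operators $\hat{\mathcal O}_{B_{ij}}$ via closure of $\mathcal H$ under multiplication (equivalently the Pieri rule (\ref{Pieri})), and handle $\hat{\mathcal O}_{A_k}$ by combining the basis property of the $\Psi_{j_1,j_2,j_3}$ with the eigenvalue formula (\ref{eq:EigenvaluesOA}) of Proposition \ref{propeigen}. Your flag that the argument is not self-contained is accurate, but this is exactly the dependence the paper itself accepts.
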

\begin{proof}
Indeed, $\mathcal A$ is acting on $\mathbf k\left(x_{12},x_{23},x_{13}\right)$, so the only thing we have to prove is that this action preserves $\mathcal H\subset\mathbf k(x_{12},x_{23},x_{13})$. Combining (\ref{Mult}) with (\ref{eq:EigenvaluesOA}) we conclude that generators of $\mathcal A$ leave $\mathcal H$ invariant and thus so does $\mathcal A$.
\end{proof}

This allows one to define a polynomial representation $\mathcal{A_H}\subset End(\mathcal H)$ of algebra $\mathcal A$. In what follows we denote the natural image of generators $\hat{\mathcal O}_{B_{ij}},\hat{\mathcal O}_{A_k}\subset \mathcal A_H$ by the same letters.

\section{Mapping Class Group Action}

\begin{definition}
Define $b_{12},b_{23},b_{13}$ to be the following automorphisms of $\mathcal F=\mathbf k(x_{12},x_{23},x_{13})[\hat\delta_{12},\hat\delta_{23},\hat\delta_{13}]$
\begin{align}
b_{ij}\big(x_{kl}\big)=x_{kl},
\qquad\qquad
b_{ij}\big(\hat\delta_{kl}\big)=\left\{\begin{array}{cll}
q^{\frac14}x_{ij}\,\hat\delta_{ij}&&(ij)=(kl),\\[10pt]
\hat\delta_{kl}&&(ij)\neq(kl)
\end{array}\right.
\qquad
\textrm{for all}\quad1\leq k<l\leq 3.
\label{eq:BAutDef}
\end{align}
\end{definition}
Note that all relations in $\mathcal F$ are preserved by (\ref{eq:BAutDef}).

\begin{lemma}
Automorphisms $b_{12}, b_{23}, b_{13}$ act on the generators of $\mathcal A$ in the following way
\begin{subequations}
\begin{align}
b_{ij}^{\pm1}\big(\hat{\mathcal O}_{A_k}\big)=&\left\{\begin{array}{lll}
\pm\dfrac{q^{\pm\frac14}}{q^{\frac12}-q^{\frac12}}\hat{\mathcal O}_{A_k}\hat{\mathcal O}_{B_{ij}}\mp\dfrac{q^{\mp\frac14}}{q^{\frac12}-q^{-\frac12}}\hat{\mathcal O}_{B_{ij}}\hat{\mathcal O}_{A_k}&&k\in\{i,j\},\\[15pt]
\hat{\mathcal O}_{A_k}&&k\not\in\{i,j\},
\end{array}\right.
\label{eq:BAutActionOA}
\\\nonumber\\
b_{ij}^{\pm1}\big(\hat{\mathcal O}_{B_{kl}}\big)=&\hat{\mathcal O}_{B_{kl}}\qquad\textrm{for all}\quad 1\leq k<l\leq3.
\label{eq:BAutActionOB}
\end{align}
\label{eq:BAutAction}
\end{subequations}
\end{lemma}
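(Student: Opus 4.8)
The plan is to prove (\ref{eq:BAutAction}) by direct computation, but organized so that the computation is short. The key observation is that the conjugation action of $b_{ij}$ is almost trivial: by (\ref{eq:BAutDef}), $b_{ij}$ fixes all multiplication operators $x_{kl}$ outright, and fixes all shift operators $\hat\delta_{kl}$ with $(kl)\neq(ij)$; only $\hat\delta_{ij}$ is altered, to $q^{1/4}x_{ij}\hat\delta_{ij}$. Since $\hat{\mathcal O}_{B_{kl}}=x_{kl}+x_{kl}^{-1}$ involves no shift operators at all, (\ref{eq:BAutActionOB}) is immediate. Likewise, for $k\notin\{i,j\}$ the operator $\hat{\mathcal O}_{A_k}$ (see (\ref{Hamiltonian1})--(\ref{Hamiltonian3})) is built only from $x_{kl}$'s and the two shift operators $\hat\delta_{kl}$ with $k,l$ the two indices distinct from the third, neither of which equals $\hat\delta_{ij}$; hence $b_{ij}$ fixes it, giving the second line of (\ref{eq:BAutActionOA}).

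The substance is therefore the first line of (\ref{eq:BAutActionOA}): the case $k\in\{i,j\}$. By the $S_3$-symmetry of the whole setup (the $\hat{\mathcal O}_{A_k}$ and $\hat{\mathcal O}_{B_{kl}}$ are permuted by relabeling indices, as in Corollary \ref{Symmetry}) it suffices to treat one representative case, say $b_{12}$ acting on $\hat{\mathcal O}_{A_1}$; the remaining cases follow by applying a permutation $\sigma\in S_3$. Here I would simply compute $b_{12}(\hat{\mathcal O}_{A_1})$: in the sum (\ref{Hamiltonian1}), every term $\hat\delta_{12}^{a}\hat\delta_{13}^{b}$ with $a=+1$ picks up a factor $q^{1/4}x_{12}$ on the left of $\hat\delta_{12}$, every term with $a=-1$ picks up $q^{-1/4}x_{12}^{-1}\hat\delta_{12}^{-1}$ (using $b_{12}(\hat\delta_{12}^{-1})=(q^{1/4}x_{12}\hat\delta_{12})^{-1}=\hat\delta_{12}^{-1}q^{-1/4}x_{12}^{-1}=q^{-1/4}x_{12}^{-1}\hat\delta_{12}^{-1}$, since $\hat\delta_{12}$ commutes with $x_{12}^{\pm1}$ only up to the shift — one must be careful that $\hat\delta_{12}^{-1}x_{12}^{-1}=q^{1/2}x_{12}^{-1}\hat\delta_{12}^{-1}$, so in fact $b_{12}(\hat\delta_{12}^{-1})=q^{1/4}x_{12}^{-1}\hat\delta_{12}^{-1}$). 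Collecting, one finds that $b_{12}(\hat{\mathcal O}_{A_1})$ differs from a linear combination of $\hat{\mathcal O}_{A_1}$ left- and right-multiplied by $x_{12}^{\pm1}$; the claim is that this reorganizes exactly into $\pm\frac{q^{\pm1/4}}{q^{1/2}-q^{-1/2}}\hat{\mathcal O}_{A_1}\hat{\mathcal O}_{B_{12}}\mp\frac{q^{\mp1/4}}{q^{1/2}-q^{-1/2}}\hat{\mathcal O}_{B_{12}}\hat{\mathcal O}_{A_1}$.

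The cleanest way to verify this last identity is to expand the right-hand side. Write $P:=\hat{\mathcal O}_{A_1}$, $Q:=\hat{\mathcal O}_{B_{12}}=x_{12}+x_{12}^{-1}$. Since $P$ is a sum of terms $c_{a,b}\,\hat\delta_{12}^a\hat\delta_{13}^b$, and $\hat\delta_{12}^a\hat\delta_{13}^b\cdot x_{12}^{\pm1}=q^{\pm a/2}x_{12}^{\pm1}\hat\delta_{12}^a\hat\delta_{13}^b$, one gets that $PQ$ and $QP$ have explicit forms as sums of $x_{12}^{\pm1}\hat\delta_{12}^a\hat\delta_{13}^b$ with coefficients involving $q^{\pm a/2}$; the combination $\frac{q^{1/4}}{q^{1/2}-q^{-1/2}}PQ-\frac{q^{-1/4}}{q^{1/2}-q^{-1/2}}QP$ then kills the $x_{12}^{-1}\hat\delta_{12}^{+1}$- and $x_{12}^{+1}\hat\delta_{12}^{-1}$-type contributions and leaves precisely $q^{a/4}x_{12}^{a}$ times the original coefficient on each $\hat\delta_{12}^a\hat\delta_{13}^b$ — which is exactly $b_{12}(P)$. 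I expect the main (and only real) obstacle to be bookkeeping: keeping the noncommutativity $\hat\delta_{12}x_{12}=q^{1/2}x_{12}\hat\delta_{12}$ straight through all four sign choices $(a,b)$ and both signs of the exponent on $b_{ij}$, and checking the $b_{ij}^{-1}$ case (which by the same method reduces to conjugating by $\hat\delta_{ij}\mapsto q^{-1/4}x_{ij}^{-1}\hat\delta_{ij}$) reproduces the stated formula with the lower signs. Once a single case is done explicitly, all others are immediate by $S_3$-symmetry and by replacing $q^{1/4}\to q^{-1/4}$.
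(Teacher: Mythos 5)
Your proof is correct and follows essentially the same route as the paper: the paper likewise observes that $b_{ij}(\hat\delta_{ij}^m)=q^{m^2/4}x_{ij}^m\hat\delta_{ij}^m$ and then verifies (\ref{eq:BAutActionOA}) by applying $b_{ij}$ to (\ref{Hamiltonian1})--(\ref{Hamiltonian3}) and matching coefficients of the monomials $\hat\delta_{ij}^{\pm1}\hat\delta_{ik}^{\pm1}$ on both sides, exactly as you do with $P$ and $Q$. One small slip: near the end you say the surviving coefficient on $\hat\delta_{12}^a\hat\delta_{13}^b$ is $q^{a/4}x_{12}^a$ times the original, but for $a=-1$ it is $q^{+1/4}x_{12}^{-1}$ (i.e.\ $q^{a^2/4}x_{12}^a$), consistent with your own earlier correct computation $b_{12}(\hat\delta_{12}^{-1})=q^{1/4}x_{12}^{-1}\hat\delta_{12}^{-1}$; with that reading the cancellation check goes through as claimed.
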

\begin{proof}
Identity (\ref{eq:BAutActionOB}) is immediate. To prove (\ref{eq:BAutActionOA}) note that (\ref{eq:BAutDef}) imply $b_{ij}\big(\hat\delta_{ij}^m\big)=q^{m^2/4}x_{ij}^m\hat\delta_{ij}^m$. Applying $b_{ij}$ to (\ref{Hamiltonian1})--(\ref{Hamiltonian3}) and collecting coefficients in front of $\hat\delta_{ij}^{\pm1}\hat\delta_{ik}^{\pm1}$ on both sides of (\ref{eq:BAutActionOA}) we get the result.
\end{proof}

\begin{corollary}
Algebra $\mathcal A$ is invariant under $b_{12}^{\pm1}, b_{23}^{\pm1}, b_{13}^{\pm1}$, as a result the above automorphisms act on $\mathcal A_H\subset\mathrm{End}(\mathcal H)$ as well.
\label{cor:BTwistActionAH}
\end{corollary}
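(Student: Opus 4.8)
The plan is to prove the statement in two stages: first that $\mathcal A$ is closed under $b_{12}^{\pm1},b_{23}^{\pm1},b_{13}^{\pm1}$, and then that these automorphisms pass to the polynomial representation $\mathcal A_H$. The first stage is purely formal given Lemma \ref{eq:BAutAction}. Each $b_{ij}^{\pm1}$ is an algebra automorphism of $\mathcal F$ that respects all relations of $\mathcal F$ (the note following (\ref{eq:BAutDef})), so its effect on the subalgebra $\mathcal A$ is controlled by its effect on the six generators. By Lemma \ref{eq:BAutAction} every generator is sent back into $\mathcal A$: the multiplication operators $\hat{\mathcal O}_{B_{kl}}$ are fixed, and each $\hat{\mathcal O}_{A_k}$ goes either to itself or to the quadratic expression in $\hat{\mathcal O}_{A_k}$ and $\hat{\mathcal O}_{B_{ij}}$ displayed in (\ref{eq:BAutActionOA}). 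Hence $b_{ij}^{\pm1}(\mathcal A)\subseteq\mathcal A$, and applying this to $b_{ij}$ and to $b_{ij}^{-1}$ separately gives $b_{ij}(\mathcal A)=\mathcal A$, so $b_{ij}$ restricts to an automorphism of $\mathcal A$ with inverse $b_{ij}^{-1}|_{\mathcal A}$.

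Next I would deduce the $\mathcal A_H$ statement. Let $\rho\colon\mathcal A\to\mathrm{End}(\mathcal H)$ be the restriction of the $\mathcal A$-action on $\mathbf{k}(x_{12},x_{13},x_{23})$ to the Weyl-symmetric subspace $\mathcal H$ (legitimate because $\mathcal A$ preserves $\mathcal H$, as established above), so that $\mathcal A_H=\rho(\mathcal A)$. The automorphism $b_{ij}$ of $\mathcal A$ descends to an automorphism of $\mathcal A_H$ as soon as it preserves $\ker\rho$, and I would obtain this in the strongest possible form by proving that $\rho$ is \emph{injective}. Granting injectivity, $\ker\rho=0$, $\rho$ is an isomorphism $\mathcal A\cong\mathcal A_H$, the transported maps $\bar b_{ij}:=\rho\,b_{ij}\,\rho^{-1}$ are mutually inverse automorphisms of $\mathcal A_H$, and because $\rho$ identifies each generator of $\mathcal A$ with the like-named generator of $\mathcal A_H$, formulas (\ref{eq:BAutActionOA})--(\ref{eq:BAutActionOB}) describe $\bar b_{ij}^{\pm1}$ on the generators of $\mathcal A_H$ verbatim.

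The injectivity of $\rho$ is the one step that is not bookkeeping, and hence the one I expect to cost real effort. The idea is that although $\mathcal H$ is a thin subspace of $\mathbf{k}(x_{12},x_{13},x_{23})$, it still detects every $q$-difference operator with rational coefficients. Concretely, take $D=\sum_{\alpha\in\mathbb Z^3}c_\alpha(x)\,\hat\delta^{\alpha}\in\mathcal A$ with $\rho(D)=0$ and finitely many nonzero $c_\alpha$; after multiplying by a common denominator we may assume the $c_\alpha$ are Laurent polynomials with supports inside a fixed finite set $F\subset\mathbb Z^3$. Since $\hat\delta^{\alpha}$ multiplies a Laurent monomial $x^{\nu}$ by the scalar $q^{\langle\alpha,\nu\rangle/2}$, applying $D$ to the symmetrized monomial $m_\mu=\sum_{w\in\mathbb Z_2^3}x^{w\mu}\in\mathcal H$ produces $\sum_{w}\big(\sum_\alpha q^{\langle\alpha,w\mu\rangle/2}c_\alpha(x)\big)x^{w\mu}$, and for $\mu$ with all coordinates large in absolute value the $2^3$ summands have pairwise disjoint supports (contained in $w\mu+F$), so each must vanish separately; taking $w=\mathrm{id}$ gives $\sum_\alpha q^{\langle\alpha,\mu\rangle/2}c_\alpha(x)=0$ for all such $\mu$. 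As for generic $q$ the functions $\mu\mapsto q^{\langle\alpha,\mu\rangle/2}$ are pairwise distinct and therefore linearly independent over $\mathbf{k}(x_{12},x_{13},x_{23})$ on any such cone, every $c_\alpha$ vanishes and $D=0$. Writing out this separation-of-supports argument carefully — or, if one wishes to avoid full faithfulness, checking by the same method only the weaker statement $b_{ij}^{\pm1}(\ker\rho)\subseteq\ker\rho$ — is the main obstacle; with it in place the corollary follows, and the action of $b_{ij}^{\pm1}$ on $\mathcal A_H$ is given on generators by (\ref{eq:BAutAction}).
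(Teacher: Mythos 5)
Your proposal is correct, and its first stage coincides with what the paper intends: the corollary is stated without proof, and the invariance of $\mathcal A$ is indeed immediate from (\ref{eq:BAutAction}), since each $b_{ij}^{\pm1}$ is an automorphism of $\mathcal F$ sending every generator of $\mathcal A$ back into $\mathcal A$ (each $\hat{\mathcal O}_{B_{kl}}$ is fixed and each $\hat{\mathcal O}_{A_k}$ goes to a quadratic word in the generators), whence $b_{ij}^{\pm1}(\mathcal A)\subseteq\mathcal A$ and, applying this to both $b_{ij}$ and $b_{ij}^{-1}$, equality. Where you genuinely go beyond the paper is the second stage. The paper passes from ``$b_{ij}$ is an automorphism of $\mathcal A$'' to ``$b_{ij}$ acts on $\mathcal A_H$'' without comment, but since $\mathcal A_H$ is the image of $\mathcal A$ under the representation $\rho$ on $\mathcal H$ (and $b_{ij}$, unlike the $a_k$, is \emph{not} defined as conjugation by an endomorphism of $\mathcal H$ --- formally it is conjugation by a Gaussian that does not preserve Laurent polynomials), the descent requires $b_{ij}^{\pm1}(\ker\rho)\subseteq\ker\rho$. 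You supply this in the strongest form by proving $\rho$ faithful via a separation-of-supports argument on symmetrized monomials combined with linear independence of the characters $\mu\mapsto q^{\langle\alpha,\mu\rangle/2}$ on a full-dimensional cone; the sketch is sound (clearing denominators is harmless because left multiplication by a nonzero Laurent polynomial is injective on $\mathbf k(x_{12},x_{13},x_{23})$, and the character independence follows from a Vandermonde induction in each variable). So your argument is not merely a rewrite: it closes a small logical gap that the paper leaves implicit, at the cost of the extra faithfulness lemma.
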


\begin{definition}
Let $A_1,A_2,A_3\in\mathrm{End}(\mathcal H)$ be defined on basis elements as
\begin{align}
A_k\psi_{j_1,j_2,j_3}=q^{\frac{j_k^2}4}t^{\frac{j_k}2}\psi_{j_1,j_2,j_3}.
\label{eq:AEndAction}
\end{align}
Denote the corresponding adjoint action of $A_1,A_2,A_3$ on $\mathrm{End}(\mathcal H)$ as
\begin{align*}
a_k(h)=A_k^{-1}\,h\,A_k,\qquad\textrm{for all}\quad h\in\mathcal H.
\end{align*}
\end{definition}

\begin{lemma}
Automorphisms $a_1,a_2,a_3$ act on generators of $\mathcal A_H$ in the following way
\begin{subequations}
\begin{align}
a_k^{\pm1}\big(\hat{\mathcal O}_{B_{ij}}\big)=&\left\{\begin{array}{lcc}
\mp\dfrac{q^{\mp\frac14}}{q^{\frac12}-q^{-\frac12}}\hat{\mathcal O}_{A_k} \hat{\mathcal O}_{B_{ij}}\pm \dfrac{q^{\pm\frac14}}{q^{\frac12}-q^{-\frac12}} \hat{\mathcal O}_{B_{ij}}\hat{\mathcal O}_{A_k}&& k\in\{i,j\},\\[15pt]
\hat{\mathcal O}_{B_{ij}}&&k\not\in\{i,j\},
\end{array}\right.
\label{eq:AAutActionOB}
\\\nonumber\\
a_k^{\pm}\big(\hat{\mathcal O}_{A_j}\big)=&\;\hat{\mathcal O}_{A_j}\qquad\textrm{for all}\quad 1\leq j,k\leq 3.
\label{eq:AAutActionOA}
\end{align}
\label{eq:AAutAction}
\end{subequations}
\label{lemm:AAutAction}
\end{lemma}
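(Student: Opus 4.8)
The plan is to mimic the proof of Lemma \ref{eq:BAutAction}, but now working inside $\mathrm{End}(\mathcal H)$ rather than in the difference-operator algebra $\mathcal F$, and exploiting the fact that the $A_k$ act diagonally in the genus two Macdonald basis. First I would record the action of $a_k^{\pm1}$ on the diagonal operators: since $A_j$ and $A_k$ are simultaneously diagonal (each $\Psi_{j_1,j_2,j_3}$ is a common eigenvector), they commute, so $a_k^{\pm1}(\hat{\mathcal O}_{A_j})=A_k^{\mp1}\hat{\mathcal O}_{A_j}A_k^{\pm1}=\hat{\mathcal O}_{A_j}$, which is exactly \eqref{eq:AAutActionOA}. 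This also requires knowing that $\hat{\mathcal O}_{A_j}$, acting on $\mathcal H$, is the diagonal operator with eigenvalue $q^{j_k/2}t^{1/2}+q^{-j_k/2}t^{-1/2}$ on $\Psi_{j_1,j_2,j_3}$, which is Proposition \ref{propeigen}.

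For \eqref{eq:AAutActionOB}, the case $k\notin\{i,j\}$ follows from the commutativity relation \eqref{eq:KnotAlgebraCommutativity2}: for instance $[\hat{\mathcal O}_{A_1},\hat{\mathcal O}_{B_{23}}]=0$ in $\mathcal A$, hence also in $\mathcal A_H$, and therefore $A_1$ commutes with $\hat{\mathcal O}_{B_{23}}$ on $\mathcal H$ — wait, this needs care, because $A_1$ is not $\hat{\mathcal O}_{A_1}$ but rather its ``square root'' in the sense of eigenvalues. So the real point is that $A_k$ is a function of the position variable $j_k$ only, and so is $\hat{\mathcal O}_{A_k}$; one should argue that any operator on $\mathcal H$ commuting with $\hat{\mathcal O}_{A_k}$ (which has simple spectrum in the relevant sense after restricting to each weight space, or more precisely whose eigenspaces are exactly the $j_k=\text{const}$ subspaces) automatically commutes with $A_k$. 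Concretely: $\hat{\mathcal O}_{B_{23}}$ preserves each subspace $\mathrm{span}\{\Psi_{j_1,j_2,j_3}:j_1=\text{fixed}\}$ by \eqref{eq:Pieri23} together with \eqref{eq:KnotAlgebraCommutativity2}, and on such a subspace $A_1$ acts as a fixed scalar $q^{j_1^2/4}t^{j_1/2}$, so $A_1$ and $\hat{\mathcal O}_{B_{23}}$ commute. The same argument handles $a_k^{\pm1}(\hat{\mathcal O}_{A_j})$ directly as well.

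For the main case $k\in\{i,j\}$ of \eqref{eq:AAutActionOB}, say $k=i$, I would apply $a_i^{\pm1}$ to $\hat{\mathcal O}_{B_{ij}}$ by evaluating on a basis element. Using \eqref{Pieri12}-type expansion, $\hat{\mathcal O}_{B_{ij}}\Psi_{\cdots}=\sum C_{a,b}\Psi_{\cdots+a\cdots+b\cdots}$ where the two indices $j_i,j_j$ each shift by $\pm1$; conjugating by $A_i^{\pm1}$ multiplies the $(a,b)$-term by the ratio $q^{((j_i+a)^2-j_i^2)/4}\mp$ (with the appropriate sign from $\pm1$), i.e. by $q^{\pm a(2j_i+a)/4}=q^{\pm a j_i/2}q^{\pm1/4}$. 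Meanwhile the right-hand side of \eqref{eq:AAutActionOB} is $\mp\frac{q^{\mp1/4}}{q^{1/2}-q^{-1/2}}\hat{\mathcal O}_{A_i}\hat{\mathcal O}_{B_{ij}}\pm\frac{q^{\pm1/4}}{q^{1/2}-q^{-1/2}}\hat{\mathcal O}_{B_{ij}}\hat{\mathcal O}_{A_i}$; applying this to $\Psi_{\cdots}$ and using Proposition \ref{propeigen}, the $\hat{\mathcal O}_{A_i}$ on the left contributes eigenvalue $q^{(j_i+a)/2}t^{1/2}+q^{-(j_i+a)/2}t^{-1/2}$ on the shifted index while the $\hat{\mathcal O}_{A_i}$ on the right contributes $q^{j_i/2}t^{1/2}+q^{-j_i/2}t^{-1/2}$ before the shift, so the $(a,b)$-coefficient gets multiplied by $\frac{1}{q^{1/2}-q^{-1/2}}\big(\mp q^{\mp1/4}(q^{(j_i+a)/2}t^{1/2}+q^{-(j_i+a)/2}t^{-1/2})\pm q^{\pm1/4}(q^{j_i/2}t^{1/2}+q^{-j_i/2}t^{-1/2})\big)$. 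The hard part — though it is only a short algebraic identity — is checking that for each $a\in\{+1,-1\}$ this scalar collapses to $q^{\pm a j_i/2}q^{\pm1/4}$, matching the conjugation computation; the $t$-dependence cancels because the coefficient of $t^{1/2}$ is $\mp q^{\mp1/4}q^{(j_i+a)/2}\pm q^{\pm1/4}q^{j_i/2}=\pm q^{j_i/2}(q^{\pm1/4}-q^{\mp1/4}q^{a/2})$ which for the two sign choices $a=\pm1$ indeed equals $q^{1/2}-q^{-1/2}$ times the expected power of $q$, and similarly for $t^{-1/2}$. Since the identity holds coefficient-by-coefficient on an arbitrary basis element $\Psi_{j_1,j_2,j_3}$ and these span $\mathcal H$ (the Proposition preceding Section 3), the operator identity \eqref{eq:AAutActionOB} follows.
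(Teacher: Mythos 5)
Your strategy is the same as the paper's: (\ref{eq:AAutActionOA}) is immediate because everything in sight is diagonal in the $\Psi$-basis, and (\ref{eq:AAutActionOB}) is checked by applying both sides to a basis element $\Psi_{j_1,j_2,j_3}$, computing the left side from the definition (\ref{eq:AEndAction}) plus the Pieri rule (\ref{Pieri}), and the right side from the Pieri rule plus the eigenvalue formula (\ref{eq:EigenvaluesOA}). Your handling of the case $k\notin\{i,j\}$ is also fine: the Pieri rule (\ref{Pieri23}) alone shows $\hat{\mathcal O}_{B_{ij}}$ never shifts the index $j_k$, so conjugation by the diagonal operator $A_k$ is trivial; the detour through (\ref{eq:KnotAlgebraCommutativity2}) is not needed.

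However, the central scalar check for $k\in\{i,j\}$ is carried out incorrectly, in two places whose misstatements compensate each other. On the left side the conjugation ratio is not $q^{\pm a(2j_k+a)/4}$: since $A_k\Psi_{j_1,j_2,j_3}=q^{j_k^2/4}t^{j_k/2}\Psi_{j_1,j_2,j_3}$ carries the factor $t^{j_k/2}$, the term with shift $a$ in the Pieri expansion is multiplied under $a_k^{\pm1}=A_k^{\mp1}(\cdot)A_k^{\pm1}$ by
\begin{align*}
\frac{\big(q^{j_k^2/4}\,t^{j_k/2}\big)^{\pm1}}{\big(q^{(j_k+a)^2/4}\,t^{(j_k+a)/2}\big)^{\pm1}}
= q^{\mp \frac{a j_k}{2}}\,q^{\mp\frac14}\,t^{\mp\frac a2},
\end{align*}
which is not $t$-free. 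Correspondingly, on the right side the $t$-dependence does not cancel: for each fixed $a\in\{\pm1\}$ exactly one of the two terms $t^{\pm\frac12}$ of the eigenvalue survives (for instance, for $a_k^{+1}$ and $a=+1$ the $t^{\frac12}$-coefficient $q^{j_k/2}\big(q^{\frac14}-q^{-\frac14}q^{\frac a2}\big)$ vanishes identically, rather than being a nonzero multiple of $q^{\frac12}-q^{-\frac12}$ as you assert), and the surviving term gives
\begin{align*}
\frac{\mp q^{\mp\frac14}\Big(q^{\frac{j_k+a}2}t^{\frac12}+q^{-\frac{j_k+a}2}t^{-\frac12}\Big)\pm q^{\pm\frac14}\Big(q^{\frac{j_k}2}t^{\frac12}+q^{-\frac{j_k}2}t^{-\frac12}\Big)}{q^{\frac12}-q^{-\frac12}}
= q^{\mp \frac{a j_k}{2}}\,q^{\mp\frac14}\,t^{\mp\frac a2}.
\end{align*}
So the two sides do agree and the lemma follows by exactly your (and the paper's) method, but as written you equate two incorrectly computed quantities: both are missing the factor $t^{\mp a/2}$, the sign of the $q$-exponent is off, and the claim ``the $t$-dependence cancels'' is false. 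Redo the bookkeeping keeping the $t^{j_k/2}$ part of $A_k$ throughout; with that correction your argument coincides with the proof in the paper.
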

\begin{proof}
Identity (\ref{eq:AAutActionOA}) is immediate. To prove (\ref{eq:AAutActionOB}) apply both sides to an arbitrary basis element $\psi_{j_1,j_2,j_3}\subset\mathcal H$ and use (\ref{eq:EigenvaluesOA}) and (\ref{Pieri}) to calculate the r.h.s., whereas (\ref{eq:EigenvaluesOA}) and (\ref{eq:AEndAction}) for the l.h.s. Since the two agree we conclude that the identity holds between the operators in $\mathrm{End}(\mathcal H)$.
\end{proof}

Combining the above Lemma with Corollary \ref{cor:BTwistActionAH} we conclude that the group $G=\langle a_1,a_2,a_3,b_{12},b_{23},b_{13}\rangle$ acts by automorphisms of $\mathcal A_H$.

\begin{proposition}
Consider the following two automorphisms
\begin{align}
I:=a_1\circ b_{12}\circ a_2\circ b_{23}\circ a_3\qquad\widetilde I:=a_3\circ b_{23}\circ a_2\circ b_{12}\circ a_1.
\label{eq:GenusTwoFourierDef}
\end{align}
Then for all $1\leq i\leq 3$ we have\footnote{Here we assume $\hat{\mathcal O}_{A_4}:=\hat{\mathcal O}_{A_1}$ and $\hat{\mathcal O}_{B_{34}}:=\hat{\mathcal O}_{B_{13}}$.}
\begin{equation}
\begin{aligned}
&I(\hat{\mathcal O}_{A_i})=\hat{\mathcal O}_{B_{i,i+1}}&\qquad& \widetilde I(\hat{\mathcal O}_{B_{i,i+1}})=\hat{\mathcal O}_{A_i}\\
&I(\hat{\mathcal O}_{B_{i,i+1}})=\hat{\mathcal O}_{A_{i+1}}&\qquad& \widetilde I(\hat{\mathcal O}_{A_{i+1}})=\hat{\mathcal O}_{B_{i,i+1}}
\end{aligned}
\label{eq:GenusTwoFourierAct}
\end{equation}
\end{proposition}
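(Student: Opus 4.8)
The plan is to verify the eight operator identities in (\ref{eq:GenusTwoFourierAct}) by tracking what each elementary automorphism in the composition does, step by step, using only the explicit action formulas (\ref{eq:BAutActionOA})--(\ref{eq:BAutActionOB}) and (\ref{eq:AAutActionOB})--(\ref{eq:AAutActionOA}), together with the relations of Lemma~\ref{eq:KnotAlgebraRelations}. First I would record the two ``local'' facts that do all the work: for a fixed pair $(ij)$ and $k\in\{i,j\}$, conjugation by $A_k$ and application of $b_{ij}$ each send one of $\hat{\mathcal O}_{A_k},\hat{\mathcal O}_{B_{ij}}$ to a $q$-commutator of the two and fix the other, while leaving all generators with disjoint index set untouched. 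Because the $q$-Serre relations (\ref{eq:KnotAlgebraQSerreABA})--(\ref{eq:KnotAlgebraQSerreBAB}) hold, these are precisely the two generators of a (spherical $A_1$) DAHA-type $SL(2,\mathbb Z)$ action on the subalgebra $\mathbf k[\hat{\mathcal O}_{A_k},\hat{\mathcal O}_{B_{ij}}]$; in particular the composite $a_k\circ b_{ij}\circ a_k$ (or the relevant order) realizes the Fourier transform $\hat{\mathcal O}_{A_k}\mapsto\hat{\mathcal O}_{B_{ij}}\mapsto\hat{\mathcal O}_{A_k}$ up to sign/normalization, which I would pin down once with the explicit constants $\pm q^{\pm1/4}/(q^{1/2}-q^{-1/2})$.

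Next I would simply run the composition $I=a_1\circ b_{12}\circ a_2\circ b_{23}\circ a_3$ on each generator from the inside out. For $\hat{\mathcal O}_{A_1}$: $a_3$ and $b_{23}$ fix it (index $1\notin\{2,3\}$), $a_2$ fixes it (conjugation by $A_2$ fixes all $\hat{\mathcal O}_{A}$'s), then $b_{12}$ turns it into the prescribed $q$-commutator of $\hat{\mathcal O}_{A_1}$ and $\hat{\mathcal O}_{B_{12}}$, and finally $a_1$ acts on that commutator; expanding via (\ref{eq:AAutActionOB}) and simplifying with the $q$-Serre relation for the pair $(1,2)$ should collapse it to $\hat{\mathcal O}_{B_{12}}$. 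The same bookkeeping handles $\hat{\mathcal O}_{A_2}$ (now $a_3,b_{23}$ act nontrivially first, producing $\hat{\mathcal O}_{B_{23}}$-flavored terms that then get rotated by $a_2,b_{12}$ into $\hat{\mathcal O}_{B_{23}}$), $\hat{\mathcal O}_{A_3}$ (which $a_3$ then $b_{23}$ carry to $\hat{\mathcal O}_{B_{23}}$, then $a_2,b_{12}$ fix), and likewise the three $\hat{\mathcal O}_{B_{i,i+1}}$ cases; the claim for $\widetilde I$ follows either by the same computation with the order reversed, or by observing $\widetilde I$ is built from the inverses in reverse order and checking $\widetilde I\circ I$ fixes each generator. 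The $S_3$ symmetry of Corollary~\ref{Symmetry}, under which the index rotation $1\to2\to3$ is realized, lets me reduce the eight identities to essentially two independent computations.

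I expect the main obstacle to be purely computational stamina rather than conceptual: the middle steps involve applying an automorphism to a degree-three noncommutative word in two generators and then using a cubic relation to re-collapse it, and the $q^{\pm1/4}$ prefactors from $b_{ij}$ and the $q^{\pm1/4}$ prefactors from $a_k$ must cancel exactly against the $(q^{1/2}-q^{-1/2})$ denominators for the final answer to come out as a bare generator with coefficient $1$. The delicate point is getting every sign and every quarter-power of $q$ right, and making sure that when $a_1$ (say) hits the commutator $\hat{\mathcal O}_{A_1}\hat{\mathcal O}_{B_{12}}$-minus-$\hat{\mathcal O}_{B_{12}}\hat{\mathcal O}_{A_1}$ produced by $b_{12}$, the cross terms organize into exactly the combination appearing on the left-hand side of (\ref{eq:KnotAlgebraQSerreBAB}). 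I would therefore do the $(1,2)$-pair Fourier computation in full detail once as a lemma — ``$a_k\circ b_{ij}\circ a_k$ swaps $\hat{\mathcal O}_{A_k}\leftrightarrow\hat{\mathcal O}_{B_{ij}}$ exactly'' — and then invoke it three times, which keeps the proof short and isolates the one genuinely fiddly calculation.
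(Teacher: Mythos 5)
Your strategy of iterating the ``local'' $SL(2,\mathbb Z)$-type Fourier transform on a matched pair $(\hat{\mathcal O}_{A_k},\hat{\mathcal O}_{B_{ij}})$, $k\in\{i,j\}$, using only the action formulas (\ref{eq:BAutAction}), (\ref{eq:AAutAction}) and the $q$-Serre relations (\ref{eq:KnotAlgebraQSerreABA})--(\ref{eq:KnotAlgebraQSerreBAB}), does correctly dispose of four of the eight identities --- exactly those the paper calls the easy cases, e.g.\ $I(\hat{\mathcal O}_{A_1})=\hat{\mathcal O}_{B_{12}}$, $I(\hat{\mathcal O}_{A_2})=\hat{\mathcal O}_{B_{23}}$, $I(\hat{\mathcal O}_{B_{23}})=\hat{\mathcal O}_{A_3}$ and their $\widetilde I$ analogues. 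But there is a genuine gap in the remaining four. Your bookkeeping for $\hat{\mathcal O}_{A_3}$ ends at $\hat{\mathcal O}_{B_{23}}$, whereas the proposition (with the convention $\hat{\mathcal O}_{B_{34}}:=\hat{\mathcal O}_{B_{13}}$) asserts $I(\hat{\mathcal O}_{A_3})=\hat{\mathcal O}_{B_{13}}$; note also that $I(\hat{\mathcal O}_{A_2})=\hat{\mathcal O}_{B_{23}}$ already, so your answer would make $I$ non-injective on generators. The actual intermediate object is
\begin{equation*}
(a_2\circ b_{23})\big(\hat{\mathcal O}_{A_3}\big)=\frac{-\hat{\mathcal O}_{A_3}\hat{\mathcal O}_{A_2}\hat{\mathcal O}_{B_{23}}+q^{\frac12}\hat{\mathcal O}_{A_3}\hat{\mathcal O}_{B_{23}}\hat{\mathcal O}_{A_2}+q^{-\frac12}\hat{\mathcal O}_{A_2}\hat{\mathcal O}_{B_{23}}\hat{\mathcal O}_{A_3}-\hat{\mathcal O}_{B_{23}}\hat{\mathcal O}_{A_2}\hat{\mathcal O}_{A_3}}{\big(q^{\frac12}-q^{-\frac12}\big)^2},
\end{equation*}
a cubic word mixing $\hat{\mathcal O}_{A_2}$, $\hat{\mathcal O}_{A_3}$, $\hat{\mathcal O}_{B_{23}}$ that the $q$-Serre relations of any single pair cannot collapse, and which no amount of applying (\ref{eq:BAutAction}) and (\ref{eq:AAutAction}) to individual generators will ever convert into something containing $\hat{\mathcal O}_{B_{13}}$.

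The missing ingredient is the pair of mixed relations (\ref{eq:KnotAlgebraExtra1}) and (\ref{eq:KnotAlgebraExtra2}), which you never invoke. They are precisely what lets one rewrite the displayed cubic word as the corresponding word in $\hat{\mathcal O}_{A_1}$, $\hat{\mathcal O}_{B_{12}}$, $\hat{\mathcal O}_{B_{13}}$, recognize it as $(b_{12}^{-1}\circ a_1^{-1})(\hat{\mathcal O}_{B_{13}})$, and hence conclude $I(\hat{\mathcal O}_{A_3})=(a_1\circ b_{12})\big((a_2\circ b_{23})(\hat{\mathcal O}_{A_3})\big)=\hat{\mathcal O}_{B_{13}}$; the identities $I(\hat{\mathcal O}_{B_{13}})=\hat{\mathcal O}_{A_1}$, $\widetilde I(\hat{\mathcal O}_{A_1})=\hat{\mathcal O}_{B_{13}}$, $\widetilde I(\hat{\mathcal O}_{B_{13}})=\hat{\mathcal O}_{A_3}$ need the same device. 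A secondary inaccuracy: the reduction ``to essentially two independent computations'' by $S_3$ symmetry does not work as stated, because the word defining $I$ is tied to a specific ordering of the indices and is not carried to itself by the cyclic rotation $1\to2\to3$; the symmetry only propagates the single-pair Fourier identity from $(1,2)$ to the other pairs, not the eight statements of the proposition among themselves.
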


\begin{proof}
First, note
\begin{align*}
(b_{12}\circ a_2)\big(\hat{\mathcal O}_{B_{12}}\big) =&\frac{-\hat{\mathcal O}_{A_2}\hat{\mathcal O}_{B_{12}}\hat{\mathcal O}_{B_{12}}+\big(q^{\frac12}+q^{-\frac12}\big)\hat{\mathcal O}_{B_{12}}\hat{\mathcal O}_{A_2}\hat{\mathcal O}_{B_{12}}-\hat{\mathcal O}_{B_{12}}\hat{\mathcal O}_{B_{12}}\hat{\mathcal O}_{A_2}}{\big(q^{\frac12}-q^{-\frac12}\big)^2}\quad\stackrel{\mathclap{\small \mbox{(\ref{eq:KnotAlgebraQSerreABA})}}}{=}\quad\hat{\mathcal O}_{A_2}\\[10pt]
(a_1\circ b_{12})\big(\hat{\mathcal O}_{A_1}\big)=&\frac{-\hat{\mathcal O}_{A_1}\hat{\mathcal O}_{A_1}\hat{\mathcal O}_{B_{12}}+\big(q^{\frac12}+q^{-\frac12}\big)\hat{\mathcal O}_{A_1}\hat{\mathcal O}_{B_{12}}\hat{\mathcal O}_{A_1}-\hat{\mathcal O}_{B_{12}}\hat{\mathcal O}_{A_1}\hat{\mathcal O}_{A_1}} {\big(q^{\frac12}-q^{-\frac12}\big)^2}\quad\stackrel{\mathclap{\small \mbox{(\ref{eq:KnotAlgebraQSerreBAB})}}}{=}\quad\hat{\mathcal O}_{B_{12}},
\end{align*}
by $S_3$ symmetry we get for all $1\leq i<j\leq3$
\begin{align}
(b_{ij}\circ a_j)\big(\hat{\mathcal O}_{B_{ij}}\big)=\hat{\mathcal O}_{A_j}\qquad
(a_i\circ b_{ij})\big(\hat{\mathcal O}_{A_i}\big)=\hat{\mathcal O}_{B_{ij}}.
\label{eq:EssentialFourier}
\end{align}
Now, combining (\ref{eq:BAutAction}), (\ref{eq:AAutAction}), and (\ref{eq:EssentialFourier}) we have
\begin{align*}
I(\hat{\mathcal O}_{B_{12}})=(a_1\circ b_{12}\circ a_2\circ b_{23}\circ a_3)\big(\hat{\mathcal O}_{B_{12}}\big)=(a_1\circ b_{12}\circ a_2)\big(\hat{\mathcal O}_{B_{12}}\big)=a_1\big(\hat{\mathcal O}_{A_1}\big) =\hat{\mathcal O}_{A_1}.
\end{align*}
By a similar computation we get
\begin{align*}
I(\hat{\mathcal O}_{A_1})=\hat{\mathcal O}_{B_{12}},\qquad I(\hat{\mathcal O}_{A_2})=\hat{\mathcal O}_{B_{23}},\qquad I(\hat{\mathcal O}_{B_{23}})=\hat{\mathcal O}_{A_{3}},
\end{align*}
\begin{align*}
\widetilde I(\hat{\mathcal O}_{A_2})=\hat{\mathcal O}_{B_{12}},\qquad
\widetilde I(\hat{\mathcal O}_{A_3})=\hat{\mathcal O}_{B_{23}},\qquad
\widetilde I(\hat{\mathcal O}_{B_{12}})=\hat{\mathcal O}_{A_1},\qquad \widetilde I(\hat{\mathcal O}_{B_{23}})=\hat{\mathcal O}_{A_2}.
\end{align*}
However in the case of the remaining four identities, the proof becomes slightly more delicate. Indeed,
\begin{align*}
(a_2\circ b_{23})\big(\hat{\mathcal O}_{A_3}\big)=&\;\frac{-\hat{\mathcal O}_{A_3}\hat{\mathcal O}_{A_2}\hat{\mathcal O}_{B_{23}}+q^{\frac12}\hat{\mathcal O}_{A_3}\hat{\mathcal O}_{B_{23}}\hat{\mathcal O}_{A_2}+q^{-\frac12}\hat{\mathcal O}_{A_2}\hat{\mathcal O}_{B_{23}}\hat{\mathcal O}_{A_3}-\hat{\mathcal O}_{B_{23}}\hat{\mathcal O}_{A_2}\hat{\mathcal O}_{A_3}}{\big(q^{\frac12}-q^{-\frac12}\big)^2}\\
\intertext{now, using (\ref{eq:KnotAlgebraExtra1}) and (\ref{eq:KnotAlgebraExtra2})}
=&\;\frac{-\hat{\mathcal O}_{A_1}\hat{\mathcal O}_{B_{12}}\hat{\mathcal O}_{B_{13}}+q^{\frac12}\hat{\mathcal O}_{B_{12}}\hat{\mathcal O}_{A_1}\hat{\mathcal O}_{B_{13}}+q^{-\frac12}\hat{\mathcal O}_{B_{13}}\hat{\mathcal O}_{A_1}\hat{\mathcal O}_{B_{12}}-\hat{\mathcal O}_{B_{13}}\hat{\mathcal O}_{B_{12}}\hat{\mathcal O}_{A_1}}{\big(q^{\frac12}-q^{-\frac12}\big)^2}\\[10pt]
=&\;(b_{1,2}^{-1}\circ a_1^{-1})\big(\hat{\mathcal O}_{B_{13}}\big).
\end{align*}
As a corollary
\begin{align}
I(\hat{\mathcal O}_{A_3})=(a_1\circ b_{12}\circ a_2\circ b_{23})\big(\hat{\mathcal O}_{A_3}\big)=\hat{\mathcal O}_{B_{13}}.
\label{eq:FourierHalfWay}
\end{align}
By a similar computation we get the remaining three identities
\begin{align*}
I(\hat{\mathcal O}_{B_{13}})=\hat{\mathcal O}_{A_1},\qquad \widetilde I(\hat{\mathcal O}_{A_1})=\hat{\mathcal O}_{B_{13}},\qquad \widetilde I(\hat{\mathcal O}_{B_{13}})=\hat{\mathcal O}_{A_3}.
\end{align*}
\end{proof}

\begin{theorem}
Mapping Class Group $\mathcal{MCG}(\Sigma_2)$ of a closed genus two surface is acting by outer \\
automorphisms of ${\mathcal A}_H$.
\end{theorem}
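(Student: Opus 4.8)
The plan is to build a homomorphism $\rho$ from $\mathcal{MCG}(\Sigma_2)$ into $\mathrm{Aut}(\mathcal A_H)$ carrying each Dehn twist about one of the cycles of Figure~\ref{Cycles} to the corresponding automorphism, and then to show that its image consists of outer automorphisms. I would fix a finite Dehn-twist presentation of $\mathcal{MCG}(\Sigma_2)$ --- the Wajnryb presentation, or any presentation adapted to the cycles of Figure~\ref{Cycles} --- and, since five consecutive cycles of the hexagonal necklace already fill the closed surface (with disk complement), take as generators the twists along the chain $A_1, B_{12}, A_2, B_{23}, A_3$, setting $\rho(T_{A_i}) = a_i$ and $\rho(T_{B_{i,i+1}}) = b_{i,i+1}$ on these five chain twists. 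By Corollary~\ref{cor:BTwistActionAH} and Lemma~\ref{lemm:AAutAction} these are genuine automorphisms of $\mathcal A_H$, so everything reduces to showing (i) $\rho$ respects the defining relations of $\mathcal{MCG}(\Sigma_2)$, and (ii) the image lands in $\mathrm{Out}(\mathcal A_H)$.

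For (i) I would treat the relations in increasing order of difficulty. The commutations between twists about disjoint cycles --- $[a_i, a_j] = [b_{ij}, b_{kl}] = [a_i, b_{jk}] = \mathrm{id}$ for non-intersecting cycles --- are immediate: the $a_k$ are simultaneously diagonal in the basis $\{\Psi_{j_1,j_2,j_3}\}$ by~(\ref{eq:AEndAction}), the $b_{ij}$ already commute as automorphisms of $\mathcal F$ because they alter pairwise distinct shift operators, and the mixed commutations are read off Lemmas~(\ref{eq:BAutAction}) and~(\ref{eq:AAutAction}). The braid relations for cycles meeting once --- the model case being $a_1 \circ b_{12} \circ a_1 = b_{12} \circ a_1 \circ b_{12}$, the genus two counterpart of the $SL(2,{\mathbb Z})$ relation $STS = TST$ --- I would obtain by substituting the explicit action formulas of Lemmas~(\ref{eq:BAutAction})--(\ref{eq:AAutAction}) into the $q$-Serre identities~(\ref{eq:KnotAlgebraQSerreABA})--(\ref{eq:KnotAlgebraQSerreBAB}); the identities $(a_1 \circ b_{12})(\hat{\mathcal O}_{A_1}) = \hat{\mathcal O}_{B_{12}}$ and $(b_{12} \circ a_2)(\hat{\mathcal O}_{B_{12}}) = \hat{\mathcal O}_{A_2}$ already established in the previous proof contain the essential content, and the $S_3$ symmetry of Corollary~\ref{Symmetry} promotes one such relation to all of them.

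The remaining relations see the global topology of the closed surface. The chain relation of the presentation reads $(T_{A_1} T_{B_{12}} T_{A_2} T_{B_{23}} T_{A_3})^{6} = 1$ --- its right-hand side is trivial precisely because the chain of five cycles fills $\Sigma_2$ with disk complement --- and under $\rho$ it becomes $I^{6} = \mathrm{id}$. This holds because, by the preceding Proposition (equation~(\ref{eq:GenusTwoFourierAct})), $I$ cyclically permutes the six generators $\hat{\mathcal O}_{A_1}, \hat{\mathcal O}_{B_{12}}, \hat{\mathcal O}_{A_2}, \hat{\mathcal O}_{B_{23}}, \hat{\mathcal O}_{A_3}, \hat{\mathcal O}_{B_{13}}$ of $\mathcal A_H$ in a single six-cycle, so $I^{6}$ fixes every generator and hence equals the identity. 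For the last (hyperelliptic/lantern-type) relation I would represent the hyperelliptic involution by the explicit word $I^{3}$, note that $I^{6} = \mathrm{id}$ already gives $(I^{3})^{2} = \mathrm{id}$, and check that $I^{3}$ commutes with each generator using once more the formulas~(\ref{eq:BAutAction})--(\ref{eq:AAutAction}) together with the ``extra'' relations~(\ref{eq:KnotAlgebraExtra1})--(\ref{eq:KnotAlgebraExtra2}). For (ii), each generator is seen to be non-inner: $a_k = \mathrm{Ad}(A_k^{-1})$ fixes the commutative subalgebra ${\mathbf k}[\hat{\mathcal O}_{A_1}, \hat{\mathcal O}_{A_2}, \hat{\mathcal O}_{A_3}]$, so if it were inner it would be conjugation by an element centralizing that subalgebra; identifying this centralizer --- it is the subalgebra itself, since the $\hat{\mathcal O}_{A_k}$ have simple joint spectrum on $\mathcal H$ --- forces such a conjugation to be trivial, contradicting $a_k \neq \mathrm{id}$, and the parallel argument with the multiplication operators $\hat{\mathcal O}_{B_{ij}}$ handles each $b_{ij}$; since these twists generate, the full image lies in $\mathrm{Out}(\mathcal A_H)$.

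The step I expect to be the genuine obstacle is the verification of the global relations, and in particular confirming that the genus-two-specific identities~(\ref{eq:KnotAlgebraExtra1})--(\ref{eq:KnotAlgebraExtra2}), rather than the $q$-Serre relations alone, are exactly what closes the chain up on a closed genus two surface --- this is where the analogy with the $A_1$ spherical DAHA and $SL(2,{\mathbb Z})$ must be pushed past the torus. The outerness statement is a close second, since it rests on the plausible but not purely formal identification of the centralizers of the ``$A$-type'' and ``$B$-type'' commutative subalgebras inside $\mathcal A_H$.
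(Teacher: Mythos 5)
Up to the last step your plan coincides with the paper's proof: one verifies the relations of a Dehn--twist presentation of $\mathcal{MCG}(\Sigma_2)$ for the automorphisms $a_1,a_2,a_3,b_{12},b_{23}$ by computing their action on the six generators of $\mathcal A_H$, the commutativity and braid relations coming from (\ref{eq:BAutAction}), (\ref{eq:AAutAction}) and the algebra relations (\ref{eq:KnotAlgebraRelations}), and $I^6=1$ from the six-cycle (\ref{eq:GenusTwoFourierAct}). The genuine gap is in your handling of the remaining global relations. In the presentation the paper uses (Wajnryb's), besides $I^6=1$ one must verify that the hyperelliptic word $H=\widetilde I\circ I = a_3\circ b_{23}\circ a_2\circ b_{12}\circ a_1\circ a_1\circ b_{12}\circ a_2\circ b_{23}\circ a_3$ is a central involution, \emph{and}, separately, the three-chain relation $(a_1\circ b_{12}\circ a_2)^4=a_3^2$ of (\ref{eq:MCGLast}). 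You replace both by the claim that the hyperelliptic involution ``is'' $I^3$ and that $(I^3)^2=1$ follows from $I^6=1$. This substitution is unjustified: in the presented group the hyperelliptic element is the specific word $H$ (or, in Wajnryb's form, the relation to check is literally (\ref{eq:MCGLast})), and an identity like $I^3=H$ is itself a consequence of the relations you are in the middle of verifying, so it cannot be invoked; moreover, showing that $I^3$ is a central involution would not imply (\ref{eq:MCGLast}) in any case.

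Concretely, what is missing is exactly the hardest computation of the paper: verifying $(a_1\circ b_{12}\circ a_2)^4=a_3^2$ as automorphisms of $\mathcal A_H$. On $\hat{\mathcal O}_{A_1},\hat{\mathcal O}_{A_2},\hat{\mathcal O}_{A_3},\hat{\mathcal O}_{B_{12}}$ this follows quickly from (\ref{eq:EssentialFourier}) and the braid relations, but on $\hat{\mathcal O}_{B_{23}}$ and $\hat{\mathcal O}_{B_{13}}$ it requires a long chain of reductions using the commutativity and braid relations together with (\ref{eq:FourierHalfWay}); nothing in your plan substitutes for this. (By contrast, the $H$-relations are the easy part on the paper's route: since (\ref{eq:GenusTwoFourierAct}) shows $\widetilde I=I^{-1}$ on generators, $H$ acts as the identity automorphism, so $H^2=1$ and centrality are automatic.) A smaller point: your outerness argument is only a sketch --- from $a_k=\mathrm{Ad}(u)$ with $u$ centralizing $\mathbf k[\hat{\mathcal O}_{A_1},\hat{\mathcal O}_{A_2},\hat{\mathcal O}_{A_3}]$ you must still rule out that some such $u$ reproduces the action of $a_k$ on the $\hat{\mathcal O}_{B_{ij}}$; the paper does not address outerness explicitly, so this is an addition rather than a deviation, but as written it is not yet a proof.
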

\begin{proof}
We will show that the outer automorphisms $a_1,a_2,a_3,b_{12},b_{23}$ satisfy the defining relations of the mapping class group of a closed genus two surface, as given in \cite{mcg}:
\begin{enumerate}
\begin{subequations}
\item Commutativity relations
\begin{align}
a_1\circ b_{23}=b_{23}\circ a_1,\qquad a_2\circ b_{13}=b_{13}\circ a_2,\qquad a_3\circ b_{12}=b_{12}\circ a_3.
\label{eq:MCGCommutativity}
\end{align}
\item Braid relations
\begin{align}
a_i\circ b_{ij}\circ a_i=b_{ij}\circ a_i\circ b_{ij}\qquad a_j\circ b_{ij}\circ a_j=b_{ij}\circ a_j\circ b_{ij}\quad\textrm{for all}\quad i<j.
\label{eq:MCGBraid}
\end{align}
\item Finite order relations
\begin{align}
I^6=1,\qquad H^2=1,\qquad H\circ a_i=a_i\circ H,\qquad H\circ b_{ij}=b_{ij}\circ H,
\label{eq:MCGFourier}
\end{align}
\begin{align}
(a_1\circ b_{12}\circ a_2)^4=a_3^2.
\label{eq:MCGLast}
\end{align}
\end{subequations}
where $I:=a_1\circ b_{12}\circ a_2\circ b_{23}\circ a_3$ and $H:=\widetilde I\circ I=a_3\circ b_{23}\circ a_2\circ b_{12}\circ a_1\circ a_1\circ b_{12}\circ a_2\circ b_{23}\circ a_3$
\end{enumerate}

To prove (\ref{eq:MCGCommutativity}) we compare action on generators $\hat{\mathcal O}_{A_1},\hat{\mathcal O}_{A_2},\hat{\mathcal O}_{A_3},\hat{\mathcal O}_{B_{12}},\hat{\mathcal O}_{B_{23}},\hat{\mathcal O}_{B_{13}}$ of ${\mathcal A}_H$ calculated in (\ref{eq:BAutAction}) and (\ref{eq:AAutAction}). The proof then follows by a straightforward computation in ${\mathcal A}_H$.

For (\ref{eq:MCGBraid}) it would be enough to prove that $a_1\circ b_{12}\circ a_2=b_{12}\circ a_1\circ b_{12},$ the others will follow by $S_3$ symmetry.
By (\ref{eq:BAutAction}) and (\ref{eq:AAutAction}) one immediately gets
\begin{subequations}
\begin{align}
(a_1\circ b_{12}\circ a_1)\big(\hat{\mathcal O}_{A_3}\big)=&\;\hat{\mathcal O}_{A_3}=(b_{12}\circ a_1\circ b_{12})\big(\hat{\mathcal O}_{A_3}\big),
\label{eq:ABAActionA3}\\[10pt]
(a_1\circ b_{12}\circ a_1)\big(\hat{\mathcal O}_{B_{23}}\big)=&\;\hat{\mathcal O}_{B_{23}}=(b_{12}\circ a_1\circ b_{12})\big(\hat{\mathcal O}_{B_{23}}\big).
\label{eq:ABAActionB23}
\end{align}
Using (\ref{eq:EssentialFourier}) we also get
\begin{align}
(a_1\circ b_{12}\circ a_1)\big(\hat{\mathcal O}_{A_1}\big)=&\;\hat{\mathcal O}_{B_{12}}=(b_{12}\circ a_1\circ b_{12})\big(\hat{\mathcal O}_{A_1}\big),
\label{eq:ABAActionA1}\\[10pt]
(a_1\circ b_{12}\circ a_1)\big(\hat{\mathcal O}_{B_{12}}\big)=&\;\hat{\mathcal O}_{A_1}=(b_{12}\circ a_1\circ b_{12})\big(\hat{\mathcal O}_{B_{12}}\big).
\label{eq:ABAActionB12}
\end{align}
\end{subequations}
Finally,
\begin{align*}
(a_1\circ b_{12}\circ a_1)\big(\hat{\mathcal O}_{A_2}\big)=&\; \frac{q^{\frac12}\hat{\mathcal O}_{A_2}\hat{\mathcal O}_{B_{12}}\hat{\mathcal O}_{A_1}+q^{-\frac12}\hat{\mathcal O}_{A_1}\hat{\mathcal O}_{B_{12}}\hat{\mathcal O}_{A_2}-\hat{\mathcal O}_{A_2}\hat{\mathcal O}_{A_1}\hat{\mathcal O}_{B_{12}}-\hat{\mathcal O}_{B_{12}}\hat{\mathcal O}_{A_1}\hat{\mathcal O}_{A_2}}{\big(q^{\frac12}-q^{-\frac12}\big)^2}\\[7pt]
=&\;(b_{12}\circ a_1\circ b_{12})\big(\hat{\mathcal O}_{A_2}\big),\\[20pt]
(a_1\circ b_{12}\circ a_1)\big(\hat{\mathcal O}_{B_{13}}\big)=&\; \frac{q^{\frac12}\hat{\mathcal O}_{B_{13}}\hat{\mathcal O}_{A_1}\hat{\mathcal O}_{B_{12}}+q^{-\frac12}\hat{\mathcal O}_{B_{12}}\hat{\mathcal O}_{A_1}\hat{\mathcal O}_{B_{13}}-\hat{\mathcal O}_{A_1}\hat{\mathcal O}_{B_{13}}\hat{\mathcal O}_{B_{12}}-\hat{\mathcal O}_{B_{12}}\hat{\mathcal O}_{B_{13}}\hat{\mathcal O}_{A_1}}{\big(q^{\frac12}+q^{-\frac12}\big)^2}\\[7pt]
=&\;(b_{12}\circ a_1\circ b_{12})\big(\hat{\mathcal O}_{B_{13}}\big).
\end{align*}

To prove (\ref{eq:MCGFourier}) recall that $I$ is acting on generators of $\mathcal A$ as a cyclic permutation of order 6 according to formula (\ref{eq:GenusTwoFourierAct}). Moreover, from (\ref{eq:GenusTwoFourierAct}) we have that $I$ and $\widetilde I$ are inverse to each other. This implies that $H=\widetilde I\circ I$ is a trivial automorphism of $\mathcal A$ i.e. $H=1$.

To prove the last relation (\ref{eq:MCGLast}) we also compare the action of both sides on generators. Immediately
\begin{subequations}
\begin{align}
(a_1\circ b_{12}\circ a_2)\big(\hat{\mathcal O}_{A_3}\big)=\hat{\mathcal O}_{A_3}
\label{eq:a1b12a2OrderOneOrbit}
\end{align}
Next, using (\ref{eq:EssentialFourier}) we have
\begin{align}
(a_1\circ b_{12}\circ a_2)\big(\hat{\mathcal O}_{A_1}\big)=\hat{\mathcal O}_{B_{12}},\qquad (a_1\circ b_{12}\circ a_2)\big(\hat{\mathcal O}_{B_{12}}\big)=\hat{\mathcal O}_{A_1}.
\label{eq:a1b12a2OrderTwoOrbit}
\end{align}
A bit more delicate computation shows
\begin{equation}
\begin{aligned}
(a_1\circ b_{12}\circ a_2)^2\big(\hat{\mathcal O}_{A_2})\;=&\quad((a_1\circ b_{12}\circ a_1)\circ (a_2\circ b_{12}\circ a_2))\big(\hat{\mathcal O}_{A_2}\big)\\[5pt]
\stackrel{\mathclap{\small \mbox{(\ref{eq:ABAActionA1})}}}{=}&\quad(a_1\circ b_{12}\circ a_1)\big(\hat{\mathcal O}_{B_{12}}\big)\\[5pt]
\stackrel{\mathclap{\small\mbox{(\ref{eq:ABAActionB12})}}}{=}&\quad \hat{\mathcal O}_{A_1}.
\end{aligned}
\label{eq:a1b12a2SquaredA2}
\end{equation}
Similarly to (\ref{eq:a1b12a2SquaredA2}) we have
\begin{align}
(a_1\circ b_{12}\circ a_2)^2\big(\hat{\mathcal O}_{A_1}\big)=\hat{\mathcal O}_{A_2}.
\label{eq:a1b12a2SquaredA1}
\end{align}
\label{eq:a1b12a2Easy}
\end{subequations}
Combining all equations (\ref{eq:a1b12a2Easy}) we conclude that $(a_1\circ b_{12}\circ a_2)^4$ acts trivially on $\hat{\mathcal O}_{A_1},\hat{\mathcal O}_{A_2},\hat{\mathcal O}_{A_3},$ and $\hat{\mathcal O}_{B_{12}}$ which coincides with an action of $a_3^2$.

\pagebreak

Equivalence of the action on $\hat{\mathcal O}_{B_{23}}$ follows from the following computation. First, we bring the identity to the equivalent form using commutativity relations (\ref{eq:MCGCommutativity}) and braid relations (\ref{eq:MCGBraid}) along with the fact that $a_1^{\pm1},b_{12}^{\pm1},b_{23}^{\pm1},$ and $b_{13}^{\pm}$ act trivially on $\hat{\mathcal O}_{B_{23}}$

\begin{align*}
(a_1\circ b_{12}\circ a_2\circ a_1\circ b_{12}\circ a_2\circ a_1\circ b_{12}\circ a_2\circ a_1\circ b_{12}\circ a_2)\big(\hat{\mathcal O}_{B_{23}}\big)=&\;a_3^2\big(\hat{\mathcal O}_{B_{23}}\big)\\
(a_2\circ a_1\circ b_{12}\circ a_2\circ a_1\circ b_{12}\circ a_1\circ a_2\circ b_{12}\circ a_2)\big(\hat{\mathcal O}_{B_{23}}\big)=&\;a_3^2\big(\hat{\mathcal O}_{B_{23}}\big)\\
(a_2\circ a_1\circ b_{12}\circ a_2\circ a_1\circ b_{12}\circ a_1\circ b_{12}\circ a_2\circ b_{12})\big(\hat{\mathcal O}_{B_{23}}\big)=&\;a_3^2\big(\hat{\mathcal O}_{B_{23}}\big)\\
(a_2\circ a_1\circ b_{12}\circ a_2\circ a_1\circ b_{12}\circ a_1\circ b_{12}\circ a_2)\big(\hat{\mathcal O}_{B_{23}}\big)=&\;a_3^2\big(\hat{\mathcal O}_{B_{23}}\big)\\
(a_2\circ a_1\circ b_{12}\circ a_2\circ a_1\circ a_1\circ b_{12}\circ a_1\circ a_2)\big(\hat{\mathcal O}_{B_{23}}\big)=&\;a_3^2\big(\hat{\mathcal O}_{B_{23}}\big)\\
(a_2\circ b_{12}\circ a_1\circ a_1\circ a_2\circ b_{12}\circ a_2)\big(\hat{\mathcal O}_{B_{23}}\big)=&\;a_3^2\big(\hat{\mathcal O}_{B_{23}}\big)\\
(a_2\circ b_{12}\circ a_1\circ a_1\circ b_{12}\circ a_2\circ b_{12})\big(\hat{\mathcal O}_{B_{23}}\big)=&\;a_3^2\big(\hat{\mathcal O}_{B_{23}}\big)\\
(a_2\circ b_{12}\circ a_1\circ a_1\circ b_{12}\circ a_2)\big(\hat{\mathcal O}_{B_{23}}\big)=&\;a_3^2\big(\hat{\mathcal O}_{B_{23}}\big)\\
(a_1\circ a_1\circ b_{12}\circ a_2)\big(\hat{\mathcal O}_{B_{23}}\big)=&\;(a_3^2\circ b_{12}^{-1}\circ a_2^{-1})\big(\hat{\mathcal O}_{B_{23}}\big)\\
\intertext{using (\ref{eq:FourierHalfWay}) to rewrite the r.h.s. we have}
(a_1\circ a_1\circ b_{12}\circ a_2)\big(\hat{\mathcal O}_{B_{23}}\big)=&\;(a_3^2\circ a_1\circ b_{13})\big(\hat{\mathcal O}_{A_3}\big)\\
(a_1\circ a_1\circ b_{12}\circ a_2)\big(\hat{\mathcal O}_{B_{23}}\big)=&\;(a_3\circ a_1)\big(\hat{\mathcal O}_{B_{13}}\big)\\
a_2\big(\hat{\mathcal O}_{B_{23}}\big)=&\;(a_3\circ b_{12}^{-1}\circ a_1^{-1})\big(\hat{\mathcal O}_{B_{13}}\big)\\
a_2\big(\hat{\mathcal O}_{B_{23}})=&\;(a_3\circ a_2\circ b_{23})\big(\hat{\mathcal O}_{A_3}\big)\\
\hat{\mathcal O}_{B_{23}}=&(a_3\circ b_{23})\big(\hat{\mathcal O}_{A_3}\big),
\end{align*}
\smallskip\\
where the latter is equivalent to (\ref{eq:EssentialFourier}).

By $S_3$ symmetry we also get

\begin{align*}
(a_1\circ b_{12}\circ a_2\circ a_2\circ b_{12}\circ a_1)\big(\hat{\mathcal O}_{B_{13}}\big)=&\;a_3^2\big(\hat{\mathcal O}_{B_{13}}\big)
\end{align*}
\smallskip\\
which by a sequence of commutativity relations (\ref{eq:MCGCommutativity}) and braid relations (\ref{eq:MCGBraid}) can be transformed into

\begin{align*}
(a_1\circ b_{12}\circ a_2)^4\big(\hat{\mathcal O}_{B_{13}}\big)=&\;a_3^2\big(\hat{\mathcal O}_{B_{13}}\big)
\end{align*}
\smallskip\\
Summarizing we conclude that

\begin{align*}
(a_1\circ b_{12}\circ a_2)^4=a_3^2
\end{align*}
\smallskip\\
as isomorphisms of $\mathcal A_H$ what finalizes the proof.

\end{proof}

\newpage

\section{Conclusion}

In this paper, we have constructed an algebra associated to a genus two surface much in the same way as spherical DAHA is associated to a torus. In this section, we would like to explain our motivation, the open problem(s) and how we see our rezult fits a more general picture.

Our main motivation comes from Chern-Simons topological quantum field theory. Namely, as explained in \cite{W,Quant}, Chern-Simons theory can be formulated in terms of quantization of the celebrated \emph{moduli space of flat G-connections on $\Sigma_g$}, where $G = SL(N,\mathbb C)$ and $\Sigma_g$ a closed genus $g$ surface. To specify more precisely what quantization means in this context, one can consider the fundamental group

\begin{align*}
\pi_1(\Sigma_g)=\left\{X_i,Y_i,\,1\leqslant i\leqslant g\;\big|\;[X_1,Y_1]\dots[X_g,Y_g]=1\right\}
\end{align*}
\smallskip\\
which is generated by paths along the $A$ and $B$ cycles and has a single relation. Fixing a point $p\in\Sigma_g$ and some choice of basic paths corresponding to generators of $\pi_1(\Sigma_g)$ which start and end at $p$, we get for each flat connection $\nabla$ on $\Sigma_g$ a matrix representation of $\pi_1(\Sigma_g)$,

\begin{align*}
\varphi(X_k)=\left(\begin{array}{ccc}
x_{11}^{(k)}&\dots&x_{1N}^{(k)}\\
\vdots&\ddots&\vdots\\
x_{N1}^{(k)}&\dots&x_{NN}^{(k)}
\end{array}\right)\qquad
\varphi(Y_k)=\left(\begin{array}{ccc}
y_{11}^{(k)}&\dots&y_{1N}^{(k)}\\
\vdots&\ddots&\vdots\\
y_{N1}^{(k)}&\dots&y_{NN}^{(k)}
\end{array}\right).
\end{align*}
\smallskip\\
All $SL(N,\mathbb C)$ representations of $\pi_1(\Sigma_g)$ then form an affine scheme $\mathcal V$ with a coordinate ring

\begin{align*}
\mathbb C[\mathcal V]=\mathbb C\big[x_{ij}^{(k)}\big]/J
\end{align*}
where $J$ is the ideal in $\mathbb C\big[x_{ij}^{(k)}\big]$ generated by all matrix elements of $\varphi([X_1,Y_1]\dots[X_g,Y_g])$ and $2g$ polynomials of the form $(\det\varphi(X_k)-1)$ and $(\det\varphi(Y_k)-1)$.

Gauge equivalent classes of flat connections are then in one-to-one correspondence with an isomorphism classes $\mathcal V_{\sim}$ of representations of $\pi_1(\Sigma_g)$. The latter has a coordinate ring $\mathbb C[\mathcal V]^{GL(N,\mathbb C)}\subset\mathbb C[\mathcal V]$ consisting of $GL(N,\mathbb C)$ invariant polynomials. It is well-known that, as a ring, $\mathbb C[\mathcal V]^{GL(N,\mathbb C)}$ is generated by traces $\left\{\textrm{tr}(\varphi(x))\;\big|\;x\in\pi_1(\Sigma_g)\right\}$. Of course, for a given $N$ there is only finitely many of them which are algebraically independent. In particular, for $N = 2$ we have a space of dimension

\begin{align*}
\mbox{dim}_{{\mathbb C}} \ \mathcal V_\sim = \left\{ \begin{array}{lll} 2, \ \ \ g = 1 \\ \\ 6 g - 6, \ \ \ g > 1 \end{array} \right.
\end{align*}
\smallskip\\
The coordinate ring of the moduli space of flat connections $\mathbb C[\mathcal V]^{GL(N,\mathbb C)}$ can be equipped with a Poisson bracket \cite{Poisson}. By a quantization of the moduli space of flat connections we mean a general associative algebra with a parameter $q$ which in the leading order expansion at $q=1$ reduces to a commutative algebra $\mathbb C[\mathcal V]^{GL(N,\mathbb C)}$, namely the coordinate ring of the moduli space of flat connections, whereas the next-to-leading order is controlled by a Poisson bracket on $\mathbb C[\mathcal V]^{GL(N,\mathbb C)}$. This algebra is the prototype and motivation for our construction.

Diffeomorphisms of $\Sigma_g$ act on the homology of the surface, in particular they act on the basic cycles generating the fundamental group and thus on holonomies of flat connection along these cycles. As a result they act on the moduli space of representations of the fundamental group. One has to note here that diffeomorphisms isotopic to identity act trivially on the conjugacy classes of holonomies of flat connection. Altogether this means that the above action actually factors through the action of the so-called mapping class group. The quantization algebra inherits this action in the following sence: the mapping class group is represented by automorphisms of the algebra. These automorphisms are usually outer.

To summarize, quantization of the moduli space of flat connections provides an algebra together with an action of the mapping class group by automorphisms, and in this sence is similar to the algebra that we construct. The crucial difference is that the quantization algebra has a single parameter $q$, while the algebra that we constructed has two parameters $q,t$ while still enjoying the full action of the mapping class group of a \textit{closed} genus two surface. In the case of the torus, the extra parameter $t$ is usually introduced via a puncture \cite{EK} which essentially does not change the mapping class group $SL(2,\mathbb Z)$, but in the case of genus two surface such a construction is not known, since introducing a puncture would change the mapping class group significantly.

Our current result therefore opens the following problem: does there exist an algebra of difference operators with two parameters $q,t$ which reduces to the quantization of the moduli space of flat connections on the genus $g$ surface for $q = t$ but enjoys the action of the mapping class group of a \textit{closed} surface by outer automorphisms for generic $q,t$? We argue that the algebra we constructed gives the solution to this problem for $g = 2$, much in the same way as spherical DAHA gives a solution for $g = 1$. The solution for $g > 2$, if it exists, would be interesting from several perspectives, including representation and knot theory.

Another open problem is to generalize the algebra we considered in this paper to higher rank $N > 2$ while keeping the genus $g = 2$. Yet another interesting problem is to investigate the classical limit of this algebra, where commutators are replaced by Poisson brackets. Last but not the least, an open problem is to find a non-symmetric version of this algebra. If it exists, this algebra would be a genus 2 analogue of non-spherical DAHA. Among other reasons, it would be interesting to find because many proofs and derivations are much simpler in the non-spherical setting, so one can reasonably expect the same to happen in the genus 2 case.

Finally, because of the action of the full genus 2 mapping class group, we expect that the algebra that we constructed should allow to compute $q,t$-invariants of arbitrary genus 2 knots, more general than can be computed by means of the conventional DAHA. We plan to address this topic in a follow-up publication.

\section*{Acknowledgments}

We would like to thank Prof. A.~Okounkov and Prof. V.~Rubtsov for fruitful discussions and Prof. I.~Cherednik for useful remarks. The work of S.A. was partly supported by the grants RFBR 15-01-04217 and 17-51-50051 Ya-F. The work of Sh.Sh. was partly supported by the grant RFBR 15-01-05990.

\newpage

\appendix

\section{Proof of Proposition \ref{propeigen}.}
\label{AppA}

\begin{lemma}
The operator ${\hat{\cal O}}_{A_1}$ acts on a product of conventional Macdonald polynomials via

\begin{align}
{\hat{\cal O}}_{A_1} \cdot P_{n}( x_{12} ) P_{m}( x_{13} ) P_{k}( x_{23} ) = \sum_{n^{\prime} = 0}^{n} \sum_{m^{\prime} = 0}^{m} \ \dfrac{g_{n}g_{m}}{g_{n^{\prime}}g_{m^{\prime}}} \ {\cal O}_{n,m|n^{\prime},m^{\prime}} \ P_{n^{\prime}}( x_{12} ) P_{m^{\prime}}( x_{13} ) P_{k}( x_{23} )
\label{MacBasis}
\end{align}
\smallskip\\
where

\begin{align*}
g_n = \dfrac{ \prod_{i = 0}^{n-1} \big( q^{\frac{i+1}{2}} - q^{\frac{-i-1}{2}} \big) }{ \prod_{i = 0}^{n-1} \big( q^{\frac{i}{2}} t^{\frac{1}{2}} - q^{\frac{-i}{2}} t^{\frac{-1}{2}} \big) }
\end{align*}
\smallskip\\
is the combinatorial norm of Macdonald polynomials, and

\begin{align*}
{\cal O}_{n,m|n^{\prime},m^{\prime}} \ = \
\left\{ \begin{array}{llll}
q^{\frac{n^{\prime}+m^{\prime}}{2}} t^{\frac{1}{2}} + q^{-\frac{n^{\prime}+m^{\prime}}{2}} t^{-\frac{1}{2}}, & n^{\prime} = n, m^{\prime} = m \\
\\
(q^{\frac{n^{\prime}}{2}} - q^{-\frac{n^{\prime}}{2}})
(t^{\frac{1}{2}} q^{\frac{m^{\prime}}{2}} - t^{-\frac{1}{2}} q^{-\frac{m^{\prime}}{2}})
, & n^{\prime} = n, m^{\prime} \in m - 2 {\mathbb N} \\
\\
(t^{\frac{1}{2}} q^{\frac{n^{\prime}}{2}} - t^{-\frac{1}{2}} q^{-\frac{n^{\prime}}{2}})
(q^{\frac{m^{\prime}}{2}} - q^{-\frac{m^{\prime}}{2}})
, & n^{\prime} \in n - 2 {\mathbb N}, m^{\prime} = m,  \\
\\
(t^{\frac{1}{2}} + t^{-\frac{1}{2}})
(t^{\frac{1}{2}} q^{\frac{n^{\prime}}{2}} - t^{-\frac{1}{2}} q^{-\frac{n^{\prime}}{2}})
(t^{\frac{1}{2}} q^{\frac{m^{\prime}}{2}} - t^{-\frac{1}{2}} q^{-\frac{m^{\prime}}{2}})
, & n^{\prime} \in n - 2 {\mathbb N}, \ m^{\prime} \in m - 2 {\mathbb N} \\
\\
- (x_{23} + x_{23}^{-1}) (t^{\frac{1}{2}} q^{\frac{n^{\prime}}{2}} - t^{-\frac{1}{2}} q^{-\frac{n^{\prime}}{2}})
(t^{\frac{1}{2}} q^{\frac{m^{\prime}}{2}} - t^{-\frac{1}{2}} q^{-\frac{m^{\prime}}{2}}), & n^{\prime} \in n - 2 {\mathbb N} - 1, \ m^{\prime} \in m - 2 {\mathbb N} - 1 \\
\\
0, & \mbox{ otherwise }
\end{array} \right.
\end{align*}
\smallskip\\
\end{lemma}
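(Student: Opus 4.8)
The plan is to exploit that $\hat{\mathcal O}_{A_1}$ contains only the shifts $\hat\delta_{12}$ and $\hat\delta_{13}$, so $P_k(x_{23})$ is carried along unchanged and $x_{23}$ enters the right-hand side only through the explicit coefficient $x_{23}+x_{23}^{-1}$. First I would expand the numerators in (\ref{Hamiltonian1}),
\begin{align*}
\big(1-t^{1/2}x_{23}x_{12}^ax_{13}^b\big)\big(1-t^{1/2}x_{23}^{-1}x_{12}^ax_{13}^b\big)=1-t^{1/2}\big(x_{23}+x_{23}^{-1}\big)x_{12}^ax_{13}^b+t\,x_{12}^{2a}x_{13}^{2b},
\end{align*}
which splits $\hat{\mathcal O}_{A_1}$ into three summands, each a product of a one-variable $q$-difference operator in $x_{12}$ with one in $x_{13}$:
\begin{align*}
\hat{\mathcal O}_{A_1}&=t^{1/2}\,\mathcal E^{+}_{12}\mathcal E^{+}_{13}+t^{-1/2}\,\mathcal E^{-}_{12}\mathcal E^{-}_{13}-\big(x_{23}+x_{23}^{-1}\big)\,\mathcal E^{0}_{12}\mathcal E^{0}_{13},\\
\mathcal E^{\pm}&=\frac{x^{\pm1}\hat\delta-x^{\mp1}\hat\delta^{-1}}{x-x^{-1}},\qquad\mathcal E^{0}=\frac{\hat\delta-\hat\delta^{-1}}{x-x^{-1}},
\end{align*}
where $x,\hat\delta$ denote one variable and its shift. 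One verifies by inspection the identities $t\mathcal E^{+}-\mathcal E^{-}=\hat{\mathcal O}_A$, $\mathcal E^{+}-\mathcal E^{-}=\hat\delta+\hat\delta^{-1}$ and $(x+x^{-1})\mathcal E^{0}=\mathcal E^{+}+\mathcal E^{-}$; restricting $\hat{\mathcal O}_{A_1}$ to functions of $x_{12}$ alone then gives $t^{1/2}\mathcal E^{+}-t^{-1/2}\mathcal E^{-}=t^{-1/2}\hat{\mathcal O}_A$, in agreement with the Remark. Because the numerators of $\mathcal E^{\pm},\mathcal E^{0}$ are antisymmetric under $x\mapsto x^{-1}$ and hence divisible by $x-x^{-1}$, each $\mathcal E^{\pm}$ sends a Weyl-symmetric Laurent polynomial of degree $d$ to one of degree $d$, and $\mathcal E^{0}$ to one of degree $d-1$, so all three preserve $\mathcal H$.

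The technical core is the one-variable statement, which I would establish by induction on $n$: for the conventional Macdonald polynomials $P_n=P_n(x;q,t)$,
\begin{align*}
\mathcal E^{+}P_n&=q^{n/2}P_n+\sum_{\substack{0\le n'<n\\n'\equiv n\,(2)}}\frac{g_n}{g_{n'}}\big(q^{n'/2}-t^{-1}q^{-n'/2}\big)P_{n'},\\
\mathcal E^{-}P_n&=-q^{-n/2}P_n+\sum_{\substack{0\le n'<n\\n'\equiv n\,(2)}}\frac{g_n}{g_{n'}}\big(tq^{n'/2}-q^{-n'/2}\big)P_{n'},\\
\mathcal E^{0}P_n&=\sum_{\substack{0\le n'<n\\n'\equiv n-1\,(2)}}\frac{g_n}{g_{n'}}\big(t^{1/2}q^{n'/2}-t^{-1/2}q^{-n'/2}\big)P_{n'}.
\end{align*}
The diagonal coefficients $q^{\pm n/2}$ drop out immediately from the action on the leading monomial $x^n$ of $P_n$. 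For the subleading coefficients, the identities above allow one to write $\mathcal E^{+}=(t-1)^{-1}\big(\hat{\mathcal O}_A-(\hat\delta+\hat\delta^{-1})\big)$, $\mathcal E^{-}=(t-1)^{-1}\big(\hat{\mathcal O}_A-t(\hat\delta+\hat\delta^{-1})\big)$ and $\mathcal E^{0}=(x+x^{-1})^{-1}\big(\mathcal E^{+}+\mathcal E^{-}\big)$, so everything reduces to the triangular expansion of $(\hat\delta+\hat\delta^{-1})P_n$ — the $t=1$ Macdonald operator applied to the $t$-deformed polynomial — in the basis $\{P_{n'}\}$. That expansion is determined inductively by applying $\hat\delta+\hat\delta^{-1}$ to the $A_1$ Pieri recursion $(x+x^{-1})P_n=P_{n+1}+\tfrac{(1-q^n)(1-q^{n-1}t^2)}{(1-q^nt)(1-q^{n-1}t)}P_{n-1}$ of Corollary \ref{Reduction}, using the commutation rule $(\hat\delta+\hat\delta^{-1})(x+x^{-1})=\tfrac{q^{1/2}+q^{-1/2}}{2}(x+x^{-1})(\hat\delta+\hat\delta^{-1})+\tfrac{q^{1/2}-q^{-1/2}}{2}(x-x^{-1})(\hat\delta-\hat\delta^{-1})$; alternatively one may insert the explicit $q$-hypergeometric expression for $P_n$ and evaluate the finite sums directly. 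I expect this step — especially the bookkeeping of the ratios $g_n/g_{n'}$ and checking that they combine into exactly the claimed coefficients — to be the main obstacle; everything after it is formal.

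Finally I would substitute the one-variable expansions into the three-summand decomposition and collect the coefficient of $P_{n'}(x_{12})P_{m'}(x_{13})P_k(x_{23})$. Denote by $u^{\pm}_{n,n'}$, $u^{0}_{n,n'}$ the bracketed factors above, with $u^{+}_{n,n}=q^{n/2}$ and $u^{-}_{n,n}=-q^{-n/2}$. If $n'\equiv n$ and $m'\equiv m\pmod 2$ only the first two summands contribute, yielding $\tfrac{g_ng_m}{g_{n'}g_{m'}}\big(t^{1/2}u^{+}_{n,n'}u^{+}_{m,m'}+t^{-1/2}u^{-}_{n,n'}u^{-}_{m,m'}\big)$; using $q^{n'/2}-t^{-1}q^{-n'/2}=t^{-1/2}\big(t^{1/2}q^{n'/2}-t^{-1/2}q^{-n'/2}\big)$ and $tq^{n'/2}-q^{-n'/2}=t^{1/2}\big(t^{1/2}q^{n'/2}-t^{-1/2}q^{-n'/2}\big)$, a short computation identifies the bracket with ${\cal O}_{n,m|n',m'}$: the fully diagonal term gives the first line of the table, the terms with $n'=n$ (resp.\ $m'=m$) and the other index strictly smaller give the second and third, and the doubly-subleading term gives the fourth, with its prefactor $t^{1/2}+t^{-1/2}$ emerging from the two contributions. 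If instead $n'\equiv n-1$ and $m'\equiv m-1\pmod 2$ only the third summand contributes, giving $-\big(x_{23}+x_{23}^{-1}\big)\tfrac{g_ng_m}{g_{n'}g_{m'}}u^{0}_{n,n'}u^{0}_{m,m'}$, the fifth line; and for all remaining $(n',m')$ (parities mismatched, or $n'>n$, or $m'>m$) every summand vanishes, which is the final line. This proves (\ref{MacBasis}).
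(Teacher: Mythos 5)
Your decomposition is sound: expanding the numerators of (\ref{Hamiltonian1}) as $1-t^{1/2}(x_{23}+x_{23}^{-1})x_{12}^ax_{13}^b+t\,x_{12}^{2a}x_{13}^{2b}$ really does split $\hat{\mathcal O}_{A_1}$ into $t^{1/2}\mathcal E^{+}_{12}\mathcal E^{+}_{13}+t^{-1/2}\mathcal E^{-}_{12}\mathcal E^{-}_{13}-(x_{23}+x_{23}^{-1})\mathcal E^{0}_{12}\mathcal E^{0}_{13}$, your operator identities $t\mathcal E^{+}-\mathcal E^{-}=\hat{\mathcal O}_A$, $\mathcal E^{+}-\mathcal E^{-}=\hat\delta+\hat\delta^{-1}$, $(x+x^{-1})\mathcal E^{0}=\mathcal E^{+}+\mathcal E^{-}$ check out, your three one-variable expansions are in fact correct (I verified them in low degree and they are consistent with the eigenvalue equation), and the final recombination does reproduce the table for ${\cal O}_{n,m|n^{\prime},m^{\prime}}$ with the prefactors $g_ng_m/(g_{n^{\prime}}g_{m^{\prime}})$, including the parity pattern and the $(t^{1/2}+t^{-1/2})$ factor.

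The genuine gap is that the three triangular expansions of $\mathcal E^{\pm}P_n$, $\mathcal E^{0}P_n$ --- equivalently the expansion of $(\hat\delta+\hat\delta^{-1})P_n$ in the basis $\{P_{n^{\prime}}\}$ with the specific coefficients $\frac{g_n}{g_{n^{\prime}}}(\ldots)$ --- are asserted rather than proved, and they carry essentially all the content of the lemma. The induction you sketch does not close as stated: commuting $\hat\delta+\hat\delta^{-1}$ past $x+x^{-1}$ produces the additional operator $(x-x^{-1})(\hat\delta-\hat\delta^{-1})$, whose action on $P_n$ is not among your induction hypotheses, so you would need a joint induction on (at least) $(\hat\delta+\hat\delta^{-1})P_n$ and $\mathcal E^{0}P_n$, and the coefficient bookkeeping you defer is exactly where the work lies. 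The alternative you mention --- substituting the $q$-hypergeometric formula for $P_n$ and ``evaluating the finite sums directly'' --- is not a routine check either: it reduces to a terminating summation of ${}_6\phi_5$ type, i.e.\ a genuine $q$-series identity. For comparison, the paper's proof sidesteps one-variable expansions altogether: it multiplies (\ref{MacBasis}) by $g_n^{-1}g_m^{-1}y_{12}^ny_{13}^m$, sums over $n,m$, rewrites everything through the Macdonald generating function $\Omega(x,y)=\sum_n g_n^{-1}P_n(x)y^n$ and its functional equations under $q$-shifts of $x$ and $y$, and thereby reduces the whole lemma to a single identity of rational functions that is checked directly. The cleanest way to complete your argument in its own terms is the same device in one variable: your claimed expansions are equivalent to closed formulas for $\mathcal E^{\pm}\Omega(x,y)$ and $\mathcal E^{0}\Omega(x,y)$ (the sums over $n-n^{\prime}$ become geometric series producing factors like $y^2/(1-y^2)$), and these follow from the shift properties of $\Omega$; without some such argument, the central step of your proof remains unproven.
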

\begin{proof}
Note that, since ${\hat{\cal O}}_{A_1}$ contains no shifts in $x_{23}$, it suffices to prove the case $k = 0$. The proof is based on the generating function for Macdonald polynomials,

\begin{align*}
\sum\limits_{n = 0}^{\infty} \ g_n^{-1} \ P_n(x) \ y^n = \dfrac{\big( q^{\frac{1}{2}} t^{-\frac{1}{2}} y x; q \big)_{\infty}\big( q^{\frac{1}{2}} t^{-\frac{1}{2}} y x^{-1}; q \big)_{\infty}}{\big( q^{\frac{1}{2}} t^{\frac{1}{2}} y x; q \big)_{\infty}\big( q^{\frac{1}{2}} t^{\frac{1}{2}} y x^{-1}; q \big)_{\infty}} := \Omega(x,y)
\end{align*}
\smallskip\\
where both $(y;q)_{\infty}$ and $(y;q)_{\infty}^{-1}$ are formal power series in $y$:

\begin{align*}
(y;q)_{\infty} = \prod\limits_{n=0}^{\infty} ( 1 - q^n y ) = \sum\limits_{n = 0}^{\infty} \dfrac{y^n}{q^n - 1}, \ \ \ \ \
\dfrac{1}{(y;q)_{\infty}} = \prod\limits_{n=0}^{\infty} ( 1 - q^n y )^{-1} = \sum\limits_{n = 0}^{\infty} \dfrac{q^{n(n-1)/2} y^n}{1 - q^n}
\end{align*}
\smallskip\\
Multiplying both sides of eq.(\ref{MacBasis}) by $g_n^{-1} g_m^{-1} y_{12}^n y_{13}^m$ and summing over $n,m \geq 0$, we get

\begin{align}
\nonumber & \sum\limits_{a,b \in \{\pm 1\}} \ a b \ \dfrac{(1 - t^{\frac{1}{2}} x_{23}^{+1} x_{12}^a x_{13}^b)(1 - t^{\frac{1}{2}} x_{23}^{-1} x_{12}^a x_{13}^b)}{t^{\frac{1}{2}} x_{12}^{a} x_{13}^b (x_{12}^{+1} - x_{12}^{-1})(x_{13}^{+1} - x_{13}^{-1})} \ \Omega(q^{\frac{a}{2}} x_{12}, y_{12})\Omega(q^{\frac{b}{2}} x_{13}, y_{13}) = \emph{} \\
\nonumber \\
\nonumber & = t^{\frac{1}{2}} \Omega(x_{12}, q^{\frac{1}{2}} y_{12})\Omega(x_{13}, q^{\frac{1}{2}} y_{13}) + t^{-\frac{1}{2}} \Omega( x_{12}, q^{-\frac{1}{2}} y_{12})\Omega(x_{13}, q^{-\frac{1}{2}} y_{13}) + \emph{} \\
\nonumber \\
\nonumber & + \dfrac{y_{13}^2}{1 - y_{13}^2} \sum\limits_{a,b} \ a b \ t^{\frac{a}{2}} \Omega(x_{12}, q^{\frac{a}{2}} y_{12}) \Omega( x_{13}, q^{\frac{b}{2}} y_{13}) + \dfrac{y_{12}^2}{1 - y_{12}^2} \sum\limits_{a,b} \ a b \ t^{\frac{b}{2}} \Omega(x_{12}, q^{\frac{a}{2}} y_{12}) \Omega( x_{13}, q^{\frac{b}{2}} y_{13}) + \emph{} \\
\nonumber \\
& + \dfrac{(t^{\frac{1}{2}} + t^{-\frac{1}{2}}) y_{12}^2 y_{13}^2 - (x_{23} + x_{23}^{-1}) y_{12} y_{13}}{(1 - y_{12}^2)(1 - y_{13}^2)} \sum\limits_{a,b} \ a b \ t^{\frac{a+b}{2}} \Omega(x_{12}, q^{\frac{a}{2}} y_{12}) \Omega(x_{13}, q^{\frac{b}{2}} y_{13})
\label{SeriesMacBasis}
\end{align}
\smallskip\\
as an equality in $\mathbf k(x_{12},x_{23},x_{13})[[y_{12},y_{13}]]$ i.e. in formal power series in $y_{12},y_{13}$ whose coefficients are rational functions in $x_{12},x_{23},x_{13}$. Dividing both sides by $\Omega(x_{12}, q^{\frac{1}{2}} y_{12}) \Omega(x_{13}, q^{\frac{1}{2}} y_{13})$, using the properties

\begin{align*}
\Omega(q^{\frac{a}{2}} x,y) = \dfrac{1 - t^{\frac{1}{2}} y x^a}{1 - t^{-\frac{1}{2}} y x^a} \ \Omega(x,q^{\frac{1}{2}} y), \ \ \ \ \ \
\Omega(x,q^{-\frac{1}{2}} y) = \dfrac{(1 - t^{-\frac{1}{2}} y x)(1 - t^{-\frac{1}{2}} y x^{-1})}{(1 - t^{\frac{1}{2}} y x)(1 - t^{\frac{1}{2}} y x^{-1})} \ \Omega(x,q^{\frac{1}{2}} y)
\end{align*}
\smallskip\\
and multiplying by the least common denominator, we obtain an equality in $\mathbf k(x_{12},x_{23},x_{13})[y_{12},y_{13}]$ whose proof is a direct computation in $\mathbf k(x_{12},x_{23},x_{13})$. Since it holds iff the original equality (\ref{SeriesMacBasis}) holds, this completes the proof.
\end{proof}

\begin{lemma}
A product of two conventional Macdonald polynomials in $x_{12}$ and $x_{13}$ resp. is decomposed in the basis of genus two Macdonald polynomials in the following way:

\begin{align}
P_{n}( x_{12} ) P_{m}( x_{13} ) = \sum\limits_{s = 0}^{\lfloor \frac{n+m}{2} \rfloor } \ \dfrac{{\cal N}_{n+m - 2 s, n, m}}{P_n(t^{\frac{1}{2}})P_m(t^{\frac{1}{2}})} \ \Psi_{n+m - 2 s, n, m}(x_{12},x_{13},x_{23})
\label{DecFormula}
\end{align}
\smallskip\\
where ${\cal N}$ are the product coefficients for conventional Macdonald polynomials,

\begin{align*}
P_{n}( x ) P_{m}( x ) = \sum\limits_{s = 0}^{\lfloor \frac{n+m}{2} \rfloor } \ {\cal N}_{n+m - 2 s, n, m} \ P_{n+m-2s}(x)
\end{align*}
\smallskip\\
given explicitly by

\begin{align}
{\cal N}_{n+m - 2 s, n, m} = \prod\limits_{i = 0}^{s-1} \dfrac{(1 - q^{n - i})(1 - q^{m - i})(1 - t^2 q^{n + m - 2 s + i})(1 - t q^i)}{(1 - t q^{n - i - 1})(1 - t q^{m - i - 1})(1 - t q^{n + m - 2 s + i + 1})(1 - q^{i + 1})}
\label{qtVerlinde}
\end{align}
\smallskip\\
\end{lemma}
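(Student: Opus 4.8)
The plan is to verify (\ref{DecFormula}) by induction on $n$, with $m$ kept fixed. Both sides lie in $\mathcal H$, and since the genus two Macdonald polynomials $\Psi_{j_1,j_2,j_3}$ form a basis of $\mathcal H$, it suffices to compare the coefficients in this expansion. The only two tools needed are the ordinary Macdonald $A_1$ Pieri rule in the variable $x_{12}$, namely $(x_{12}+x_{12}^{-1})P_{\ell}(x_{12})=P_{\ell+1}(x_{12})+\beta_{\ell}P_{\ell-1}(x_{12})$ with $\beta_{\ell}=\frac{(1-q^{\ell})(1-q^{\ell-1}t^2)}{(1-q^{\ell}t)(1-q^{\ell-1}t)}$ (read off from the proof of Corollary \ref{Reduction}), and the genus two Pieri rule (\ref{Pieri12}).

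For the base case $n=0$ one has $P_0(x_{12})P_m(x_{13})=P_m(x_{13})$. The explicit formula (\ref{qtVerlinde}) gives ${\cal N}_{m,0,m}=1$ (empty product) and ${\cal N}_{m-2s,0,m}=0$ for $s\geq 1$ (the factor $1-q^{n-i}$ vanishes at $i=0$ when $n=0$), while $P_0(t^{\frac12})=1$ and $\Psi_{m,0,m}=P_m(t^{\frac12})\,P_m(x_{13})$ by Corollary \ref{Reduction}; hence the right hand side of (\ref{DecFormula}) collapses to $P_m(x_{13})$, as required.

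For the inductive step write $P_n(x_{12})=(x_{12}+x_{12}^{-1})P_{n-1}(x_{12})-\beta_{n-1}P_{n-2}(x_{12})$, multiply by $P_m(x_{13})$, and substitute the inductive hypothesis for $P_{n-1}(x_{12})P_m(x_{13})$ and $P_{n-2}(x_{12})P_m(x_{13})$. Expanding $(x_{12}+x_{12}^{-1})\Psi_{j_1,n-1,m}$ by (\ref{Pieri12}) — which shifts $j_1$ by $\pm1$, shifts the second index $n-1$ to $n-1\pm1\in\{n-2,n\}$, and leaves the third index equal to $m$ — one obtains $P_n(x_{12})P_m(x_{13})$ as an explicit combination of $\Psi_{j_1,n,m}$ and $\Psi_{j_1,n-2,m}$, with $j_1=n+m-2s$ in all terms. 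Comparing with the claimed (\ref{DecFormula}) reduces the step to two explicit identities: (i) the total coefficient of each $\Psi_{n+m-2s,n-2,m}$ vanishes — equivalently, the $b=-1$ part (in the notation of the sum (\ref{Pieri12})) of the Pieri-expanded first summand exactly cancels the $-\beta_{n-1}P_{n-2}(x_{12})P_m(x_{13})$ contribution; this step simultaneously delivers the support statement that only the second index $n$ and third index $m$ occur — and (ii) the coefficient of each $\Psi_{n+m-2s,n,m}$ equals $\frac{{\cal N}_{n+m-2s,n,m}}{P_n(t^{\frac12})P_m(t^{\frac12})}$, which upon inserting the inductive expansion of $P_{n-1}(x_{12})P_m(x_{13})$ becomes a relation expressing ${\cal N}_{n+m-2s,n,m}$ through ${\cal N}_{(n-1)+m-2s,n-1,m}$, ${\cal N}_{(n-1)+m-2(s-1),n-1,m}$, the Pieri coefficients $C_{+1,+1}$ and $C_{-1,+1}$ of (\ref{Cab}) evaluated at $(\,\cdot\,,n-1,m)$, and the ratio $P_{n-1}(t^{\frac12})/P_n(t^{\frac12})$.

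Both (i) and (ii) are rational function identities in $q^{\frac14},t^{\frac14}$, to be checked directly by substituting the closed forms (\ref{Cab}), (\ref{qtVerlinde}), the expression for $\beta_{\ell}$, and the principal specialization $P_{\ell}(t^{\frac12})=t^{\ell/2}\prod_{i=0}^{\ell-1}\frac{1-q^it}{1-q^it^2}$. The boundary values of $s$ need no separate treatment: the relevant $C_{a,b}$ vanish on non-admissible triples (by the earlier Lemma characterizing the non-vanishing of $C_{a,b}$), the corresponding $\Psi$ are zero, and the corresponding ${\cal N}$ vanish as well. I expect identity (ii) to be the main obstacle: it is precisely the point where the genus two Pieri coefficients (\ref{Cab}) have to be reconciled with the ordinary Macdonald fusion recursion satisfied by the $q,t$-Verlinde numbers (\ref{qtVerlinde}), and the somewhat involved form of (\ref{Cab}) makes the bookkeeping heavy; the cleanest route is probably to rewrite both sides in terms of the brackets $[n,m]_{q,t}$ and telescope in $s$, or simply to confirm the identity with computer algebra as with the other "direct computation" steps of the paper.
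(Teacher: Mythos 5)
Your proposal is correct and is essentially the paper's own proof with the roles of $n$ and $m$ (equivalently $x_{12}$ and $x_{13}$) interchanged: the paper inducts on $m$ via the ordinary three-term recurrence in $x_{13}$ and the genus two Pieri rule (\ref{Pieri13}), anchoring the base case on Corollary \ref{Reduction}, and reduces the step to exactly your identities (i) and (ii) (its $\alpha'_s=0$ and $\alpha_s={\cal N}_{n+\ell-2s+1,n,\ell+1}/(P_n(t^{\frac12})P_{\ell+1}(t^{\frac12}))$), which it likewise settles by ``a direct computation'' with the closed forms (\ref{Cab}) and (\ref{qtVerlinde}). No substantive difference.
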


\begin{proof}
The proof is by induction in $m$. The base case $m = 0$ is simple: in this case, the second factor in the numerator of ${\cal N}_{n+m - 2 s, n, m}$ vanishes unless $s = 0$, which gives

\begin{align*}
P_{n}( x_{12} ) = \dfrac{\Psi_{n, n, 0}(x_{12},x_{13},x_{23})}{P_n(t^{\frac{1}{2}})}
\end{align*}
\smallskip\\
which we already proved in Corollary \ref{Reduction}. Next, assume (\ref{DecFormula}) holds for all $m \leq \ell$. Then,

\begin{align*}
P_{n}( x_{12} ) P_{\ell+1}( x_{13} ) = (x_{13} + x_{13}^{-1}) P_{n}( x_{12} ) P_{\ell}( x_{13} ) - \dfrac{(1 - q^\ell)(1 - q^{\ell - 1} t^2)}{(1 - q^\ell t)(1 - q^{\ell-1} t)} P_{n}( x_{12} ) P_{\ell-1}( x_{13} ) = \\
\\
=  \sum\limits_{s = 0}^{\lfloor \frac{n+\ell+1}{2} \rfloor } \ \dfrac{{\cal N}_{n+\ell - 2 s, n, \ell}}{P_n(t^{\frac{1}{2}})P_\ell(t^{\frac{1}{2}})} \ \sum\limits_{a,b \in \{\pm 1\}} \ C_{a,b}(n+\ell - 2 s, \ell, n) \ \Psi_{n+\ell - 2 s + a, n, \ell + b}(x_{12},x_{13},x_{23}) - \\
\\
- \dfrac{(1 - q^\ell)(1 - q^{\ell - 1} t^2)}{(1 - q^\ell t)(1 - q^{\ell-1} t)} \sum\limits_{s = 0}^{\lfloor \frac{n+\ell-1}{2} \rfloor } \ \dfrac{{\cal N}_{n + \ell - 2 s - 1, n, \ell - 1}}{P_n(t^{\frac{1}{2}})P_{\ell-1}(t^{\frac{1}{2}})} \ \Psi_{n+\ell - 2 s-1, n, \ell-1}(x_{12},x_{13},x_{23}) =\\
\\
= \sum\limits_{s = 0}^{\lfloor \frac{n+\ell+1}{2} \rfloor } \ \alpha_{s} \ \Psi_{n+\ell - 2 s + 1, n, \ell + 1}(x_{12},x_{13},x_{23}) + \sum\limits_{s = 0}^{\lfloor \frac{n+\ell-1}{2} \rfloor } \ \alpha^{\prime}_{s} \ \Psi_{n+\ell - 2 s - 1, n, \ell - 1}(x_{12},x_{13},x_{23})
\end{align*}
\smallskip\\
where $\alpha_s$ and $\alpha^{\prime}_s$ are shorthand notations for

\begin{align*}
\alpha_s =
\dfrac{{\cal N}_{n + \ell - 2 s, n, \ell}}{P_n(t^{\frac{1}{2}})P_\ell(t^{\frac{1}{2}})} \ C_{+1,+1}(n + \ell - 2 s, \ell, n) +
\dfrac{{\cal N}_{n + \ell - 2 s + 2, n, \ell}}{P_n(t^{\frac{1}{2}})P_\ell(t^{\frac{1}{2}})} \ C_{-1,+1}(n+\ell - 2 s + 2, \ell, n)
\end{align*}
\begin{align*}
\alpha^{\prime}_s \ = \ &
\dfrac{{\cal N}_{n + \ell - 2 s, n, \ell}}{P_n(t^{\frac{1}{2}})P_\ell(t^{\frac{1}{2}})} \ C_{-1,-1}(n + \ell - 2 s, \ell, n) +
\dfrac{{\cal N}_{n + \ell - 2 s - 2, n, \ell}}{P_n(t^{\frac{1}{2}})P_\ell(t^{\frac{1}{2}})} \ C_{+1,-1}(n+\ell - 2 s - 2, \ell, n) \\
\\
& \emph{} - \dfrac{(1 - q^\ell)(1 - q^{\ell - 1} t^2)}{(1 - q^\ell t)(1 - q^{\ell-1} t)} \ \dfrac{{\cal N}_{n + \ell - 2 s - 1, n, \ell - 1}}{P_n(t^{\frac{1}{2}})P_{\ell-1}(t^{\frac{1}{2}})}
\end{align*}
\smallskip\\
A direct computation with the explicit formulas eq.(\ref{qtVerlinde}) for ${\cal N}$ and eq.(\ref{Cab}) for $C$ shows that

\begin{align*}
\alpha_s = \dfrac{{\cal N}_{n + \ell - 2 s + 1, n, \ell + 1}}{P_n(t^{\frac{1}{2}})P_{\ell+1}(t^{\frac{1}{2}})}, \ \ \ \ \ \alpha^{\prime}_s = 0
\end{align*}
\smallskip\\
what completes the proof.
\end{proof}

\pagebreak

\begin{lemma}
Let ${\hat{\cal W}}_{A_1}$ be an operator on ${\cal H}$ defined by
\begin{align*}
\hat{\cal W}_{A_1}\Psi_{j_1,j_2,j_3}=\left(q^{\frac {j_1}2}t^{\frac12}+q^{-\frac {j_1}2}t^{-\frac12}\right)\Psi_{j_1,j_2,j_3}
\end{align*}
\smallskip\\
Then,

\begin{align}
{\hat{\cal W}}_{A_1} \cdot P_{n}( x_{12} ) P_{m}( x_{13} ) P_{k}( x_{23} ) = \sum_{n^{\prime} = 0}^{n} \sum_{m^{\prime} = 0}^{m} \ \dfrac{g_{n}g_{m}}{g_{n^{\prime}}g_{m^{\prime}}} \ {\cal O}_{n,m|n^{\prime},m^{\prime}} \ P_{n^{\prime}}( x_{12} ) P_{m^{\prime}}( x_{13} ) P_{k}( x_{23} )
\label{MacBasis2}
\end{align}
\smallskip\\
with the same $g_n$ and ${\cal O}_{n,m|n^{\prime},m^{\prime}}$ as in eq.(\ref{MacBasis}).
\end{lemma}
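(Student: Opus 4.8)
The plan is to reduce to the case $k=0$ and then work in the basis of genus two Macdonald polynomials, in which $\hat{\cal W}_{A_1}$ is diagonal by construction.

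First I would observe that $\hat{\cal W}_{A_1}$ commutes with multiplication by $P_k(x_{23})$. Indeed, the Pieri rule (\ref{Pieri23}) sends $\Psi_{j_1,j_2,j_3}$ to a ${\mathbf k}$-linear combination of $\Psi_{j_1,j_2+a,j_3+b}$, so multiplication by $x_{23}+x_{23}^{-1}$ preserves the first index $j_1$; since $\hat{\cal W}_{A_1}$ scales $\Psi_{j_1,j_2,j_3}$ by a factor depending only on $j_1$, the two operators commute, and hence so do $\hat{\cal W}_{A_1}$ and multiplication by any polynomial in $x_{23}+x_{23}^{-1}$, in particular $P_k(x_{23})$. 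Since $P_k(x_{23})$ is carried along unchanged on the right hand side of (\ref{MacBasis2}), and neither ${\cal O}_{n,m|n',m'}$ nor the prefactor $g_ng_m/(g_{n'}g_{m'})$ depends on $k$, it suffices to prove the case $k=0$.

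For $k=0$ I would use the decomposition formula (\ref{DecFormula}) to rewrite the left hand side in the $\Psi$-basis,
\begin{align*}
\hat{\cal W}_{A_1}\cdot P_n(x_{12})P_m(x_{13})=\sum_{s=0}^{\lfloor \frac{n+m}{2}\rfloor}\frac{{\cal N}_{n+m-2s,n,m}}{P_n(t^{\frac{1}{2}})P_m(t^{\frac{1}{2}})}\left(q^{\frac{n+m-2s}{2}}t^{\frac{1}{2}}+q^{-\frac{n+m-2s}{2}}t^{-\frac{1}{2}}\right)\Psi_{n+m-2s,n,m},
\end{align*}
which lies in the span of the $\Psi_{n+m-2s,n,m}$ whose last two indices are fixed to $n$ and $m$. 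Then I would expand the right hand side of (\ref{MacBasis2}) with $k=0$ in the same basis: for the four cases of ${\cal O}_{n,m|n',m'}$ that do not contain $x_{23}$, apply (\ref{DecFormula}) directly to $P_{n'}(x_{12})P_{m'}(x_{13})$; for the remaining case, which carries the factor $-(x_{23}+x_{23}^{-1})$, first apply (\ref{DecFormula}) to $P_{n'}(x_{12})P_{m'}(x_{13})$ and then the Pieri rule (\ref{Pieri23}) to rewrite $(x_{23}+x_{23}^{-1})\Psi_{j_1,n',m'}$ as a combination of $\Psi_{j_1,n'+a,m'+b}$. Collecting terms by the pair of last two indices, one checks that the only contributions with last two indices $(n,m)$ come from the diagonal term ${\cal O}_{n,m|n,m}=q^{\frac{n+m}{2}}t^{\frac{1}{2}}+q^{-\frac{n+m}{2}}t^{-\frac{1}{2}}$ and from $(n',m')=(n-1,m-1)$ together with $(a,b)=(+1,+1)$ in the $x_{23}$-case, whereas for every other pair $(n',m')$ the direct contribution of ${\cal O}_{n,m|n',m'}$ must cancel against the residual Pieri contributions of the $x_{23}$-case. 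Since the $\Psi_{j_1,j_2,j_3}$ form a basis of ${\cal H}$, equating coefficients turns the claim into a finite list of identities among ${\cal N}$ (\ref{qtVerlinde}), $C$ (\ref{Cab}), the norms $g_n$, and the eigenvalues $q^{\frac{j}{2}}t^{\frac{1}{2}}+q^{-\frac{j}{2}}t^{-\frac{1}{2}}$, each a direct computation in ${\mathbf k}={\mathbb C}(q^{\frac{1}{4}},t^{\frac{1}{4}})$.

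The step I expect to be the main obstacle is this last bookkeeping: organizing the five cases of ${\cal O}_{n,m|n',m'}$ after the change of basis, tracking the Pieri expansion of the single $x_{23}$-dependent term, and checking that its spurious contributions to all the families with last two indices $(n',m')\neq(n,m)$ indeed cancel against the off-diagonal part of ${\cal O}_{n,m|n',m'}$. Once (\ref{MacBasis2}) is established, comparing it with (\ref{MacBasis}) shows that $\hat{\cal O}_{A_1}$ and $\hat{\cal W}_{A_1}$ act identically on the spanning set $\{\,P_n(x_{12})P_m(x_{13})P_k(x_{23})\,\}$ of ${\cal H}$ and hence coincide, which is Proposition \ref{propeigen}.
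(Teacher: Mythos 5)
Your proposal is correct and follows essentially the same route as the paper: reduce to $k=0$ using that multiplication by functions of $x_{23}$ commutes with $\hat{\cal W}_{A_1}$ (via the Pieri rule (\ref{Pieri23})), expand both sides in the genus two Macdonald basis through (\ref{DecFormula}), treat the single $x_{23}$-dependent case of ${\cal O}_{n,m|n',m'}$ with (\ref{Pieri23}), and equate coefficients, reducing everything to direct identities among ${\cal N}$, $C$, $g_n$ and the eigenvalues in ${\mathbf k}$. Your identification of the two contributions to the family with last indices $(n,m)$ — the diagonal term and $(n',m')=(n-1,m-1)$ with $(a,b)=(+1,+1)$ — matches the identity displayed in the paper's proof.
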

\begin{proof}
Note that, since multiplication by a function of $x_{23}$ commutes with $\hat{\cal W}_{A_1}$ (by the Pieri rule eq.(\ref{Pieri23})) it suffices to prove the case $k = 0$. Expanding both sides of the equality in the basis of genus two Macdonald polynomials, we find

\begin{align*}
& \sum\limits_{s = 0}^{\lfloor \frac{n+m}{2} \rfloor } \ \dfrac{{\cal N}_{n+m - 2 s, n, m}}{P_n(t^{\frac{1}{2}})P_m(t^{\frac{1}{2}})} \ \left(q^{\frac {n+m - 2 s}2}t^{\frac12}+q^{-\frac {n+m - 2 s}2}t^{-\frac12}\right) \Psi_{n+m - 2 s, n, m}(x_{12},x_{13},x_{23}) = \emph{} \\
\nonumber \\
& \emph{} = \sum_{n^{\prime} = 0}^{n} \sum_{m^{\prime} = 0}^{m} \ \dfrac{g_{n}g_{m}}{g_{n^{\prime}}g_{m^{\prime}}} \ {\cal O}_{n,m|n^{\prime},m^{\prime}} \ \sum\limits_{s = 0}^{\lfloor \frac{n^{\prime}+m^{\prime}}{2} \rfloor } \ \dfrac{{\cal N}_{n^{\prime} + m^{\prime} - 2 s, n^{\prime}, m^{\prime}}}{P_{n^{\prime}}(t^{\frac{1}{2}})P_{m^{\prime}}(t^{\frac{1}{2}})} \ \Psi_{n^{\prime}+m^{\prime} - 2 s, n^{\prime}, m^{\prime}}(x_{12},x_{13},x_{23})
\end{align*}
\smallskip\\
Equating coefficients in front of $\Psi_{n+m - 2 s, n, m}(x_{12},x_{13},x_{23})$, we find an identity

\begin{align*}
& \dfrac{{\cal N}_{n+m - 2 s, n, m}}{P_n(t^{\frac{1}{2}})P_m(t^{\frac{1}{2}})} \ \left(q^{\frac {n+m - 2 s}2}t^{\frac12}+q^{-\frac {n+m - 2 s}2}t^{-\frac12}\right) \ = \ \emph{}\\
\\
& \emph{} \ = \ \dfrac{{\cal N}_{n + m - 2 s, n, m}}{P_{n}(t^{\frac{1}{2}})P_{m}(t^{\frac{1}{2}})} \left(q^{\frac {n+m}2}t^{\frac12}+q^{-\frac {n+m}2}t^{-\frac12}\right) + \dfrac{g_{n}g_{m}}{g_{n - 1}g_{m - 1}} \dfrac{{\cal N}_{n + m - 2 s, n - 1, m -1 }}{P_{n-1}(t^{\frac{1}{2}})P_{m-1}(t^{\frac{1}{2}})} C_{++}(n - 1, m - 1, n + m - 2 s)
\end{align*}
\smallskip\\
whose proof is a direct computation in the field, with the explicit formulas eq.(\ref{qtVerlinde}) for ${\cal N}$ and eq.(\ref{Cab}) for $C$. Similarly, equating coefficients in front of $\Psi_{n^{\prime}+m^{\prime} - 2 s, n^{\prime}, m^{\prime}}(x_{12},x_{13},x_{23})$ with any given $s$ and $(n^{\prime}, m^{\prime}) \neq (n,m)$ results in an identity whose proof is a direct computation in the field, as well, completing the proof.
\end{proof}

\begin{customprop}{\ref{propeigen}}
Genus two Macdonald polynomials are common eigenfunctions of ${\hat {\cal O}}_{A_1}, {\hat {\cal O}}_{A_2}, {\hat {\cal O}}_{A_3}$
\begin{align*}
\hat{\mathcal O}_{A_k}\Psi_{j_1,j_2,j_3}=\left(q^{\frac {j_k}2}t^{\frac12}+q^{-\frac {j_k}2}t^{-\frac12}\right)\Psi_{j_1,j_2,j_3}
\end{align*}
\end{customprop}
\begin{proof}
It is enough to prove the statement for $\hat{\mathcal O}_{A_1}$, the other two will follow by $S_3$ symmetry (\ref{eq:PsiS3Symmetry}). Comparing (\ref{MacBasis}) and (\ref{MacBasis2}) we conclude that ${\hat{\cal O}}_{A_1}$ and ${\hat{\cal W}}_{A_1}$ are identical operators on ${\cal H}$.
\end{proof}

\pagebreak

\end{document}